\documentclass{amsart}
\usepackage{amsmath, amssymb, mathrsfs, verbatim, multirow}

\usepackage{amsthm}
\usepackage{xcolor}

\usepackage{pgfplots}
\pgfplotsset{compat=1.15}
\usetikzlibrary{arrows}

\usepackage[utf8]{inputenc}
\usepackage{tikz}
\usepackage{tikz-cd}
\usepackage{subcaption} 
\usepackage{graphicx}

\theoremstyle{plain}

\newtheorem{Teo}{Theorem}[section]
\newtheorem{Def}[Teo]{Definition}
\newtheorem{Prop}[Teo]{Proposition}
\newtheorem{Obs}[Teo]{Remark}
\newtheorem{Lema}[Teo]{Lemma}
\newtheorem{Cor}[Teo]{Corollary}

\newtheorem{Exa}[Teo]{Example}

\newtheorem{Result}[Teo]{Main Result}
\newcommand{\Q}{\mathbb{Q}}
\newcommand{\R}{\mathbb{R}}
\newcommand{\Z}{\mathbb{Z}}
\newcommand{\N}{\mathbb{N}}

\newcommand{\Llr}{\Longleftrightarrow}

\newcommand{\VR}{\mathcal{O}}

\newcommand{\MI}{\mathfrak{m}}

\newcommand{\SU}{\mbox{supp}}
\newcommand{\supp}{\mbox{\rm supp}}

\begin{document}
	\title[On topologies on the space of valuations  and the valuative tree]{On topologies on the space of valuations\\ and the valuative tree}
	\author{V. Manfredini, J. Novacoski and C. H. Silva de Souza}
	\thanks{During the realization of this project the authors were supported by a grant from Funda\c c\~ao de Amparo \`a Pesquisa do Estado de S\~ao Paulo (process numbers 2017/17835-9 and 2021/13531-0) and from CAPES (process number 88887.841249/2023-00).}

\date{}
\begin{abstract} In this paper, we discuss topological aspects of the space of valuations $\mathbb{V}$ and the valuative tree $\mathcal{T}(v,\Lambda)$ in the sense of \cite{Nart_1}. We present a relation between the weak tree topology and the Scott topology in $\mathcal{T}(v,\Lambda)$ and describe the supremum of an increasing family of valuations in a special subtree. We also view the valuative tree as a subset of the product $(\Lambda_\infty)^{K[x]}$ and prove that it is closed if we consider the natural product topology.  

\end{abstract}

\keywords{Scott topology, valuations, valuative tree}
\subjclass[2010]{Primary 13A18}

\address{Department of Mathematics\\
Federal University of São Carlos,\newline \indent
São Carlos-SP,
Brazil}
\email{vmmanfredini@estudante.ufscar.br}

\email{josnei@ufscar.br}

\email{caiohss@estudante.ufscar.br}

\maketitle

\section{Introduction}



Topologies on spaces of valuations appear in many different contexts. The most common is the \textit{Zariski topology}. It was introduced by Zariski in the first half of the twentieth century and it has been extensively studied since then. Initially, it was defined as a topology on algebraic varieties, but in a modern language it is defined as a topology on the spectrum of a ring $R$, i.e., the set of all prime ideals of $R$. The space of \textit{Krull valuations} on a ring (or valuations on a field) admits a natural structure as inverse limit of spectra of rings with their respective Zariski topologies. The corresponding topology is called again the Zariski topology on the space of valuations.

The Zariski topology is constructed in the set of Krull valuations of a ring and its definition cannot be extended directly to the set of all valuations on this ring. To overcome this problem, Huber and Knebusch introduced the \textit{valuation spectrum topology} (see \cite{Kne}).

The valuative tree structure also appears as an alternative in the study of sets of valuations. It was developed by Favre and Jonsson in \cite{Fav_1}, where it is connected to the theory of \textit{Berkovich spaces}. In this original context, the valuative tree consisted of all (normalized and centered) valuations in $\mathbb{C}[[x,y]]$, the ring   of formal power series in two variables over the field of complex numbers, and taking values in $\R$. Favre and Jonsson give a (rooted non-metric) tree structure to this space. This tree structure induces different topologies on this set.

More recently, this concept was generalized in \cite{Nart_1}. Namely, one can consider the  set of all valuations in a polynomial ring $K[x]$, taking values in a given totally ordered abelian group,  extending a fixed valuation on $K$.

From now on, we fix a valued field $(K,v)$ and a totally ordered abelian group $\Lambda$, containing the value group of $v$. Denote by $\mathbb{V}$ the set of all equivalence classes of valuations in the polynomial ring in one variable $K[x]$ whose restriction to $K$ is equivalent to $v$. Also, denote by $\mathcal{T}=\mathcal{T}(v,\Lambda)$ set of all valuations on $K[x]$, taking values in $\Lambda$,  whose restriction  to $K$ is  $v$. We will refer to $\mathbb{V}$ as the \textbf{space of valuations} and to $\mathcal T$ as the \textbf{valuative tree}. In this paper, we discuss topological aspects of $\mathbb{V}$ and $\mathcal T$.

Every tree $(\mathcal{T},\leq)$ is, by definition, a partially ordered set. Hence, we can consider the \textit{Scott topology} on it. Another natural topology  on $\mathcal{T}$ is the so called \textit{weak tree topology}, which is defined using the notion of intervals in $\mathcal{T}$.  
Let $\mathfrak{S}_{\mu}$ be the Scott topology defined over the rooted tree $(\mathcal{T}, \leq_\mu)$, for $\mu\in \mathcal{T}(v, \Lambda)$ (see Section 4 for the respective definitions). 

We will say that $\Lambda$ is \textbf{densely ordered} if for all $\gamma, \lambda \in \Lambda$ with $\gamma < \lambda$, there exists $\alpha \in \Lambda$ such that $\gamma < \alpha < \lambda$. Our first main result is the following.

\begin{Result}(Theorem \ref{teoWeakTreeTopGenByScott}) Assume that $\Lambda$ is densely ordered. Then the weak tree topology coincides with the topology generated by $\bigcup_{\mu\in\mathcal{T}}\mathfrak{S}_{\mu}$.
    
\end{Result}

In \cite{Nart_1}, a universal ordered abelian group $\Lambda$ is presented having the following property. Each class of valuations whose restriction to $K$ is equivalent to $v$ contains a valuation $\nu:K[x]\longrightarrow \Lambda_\infty$. Then, it is possible to construct a subtree $\mathcal{T}_{\rm sme}$ which is in bijection with the space of valuations $\mathbb{V}$ (see Section  \ref{SecTreeSME} for details). 

In recent works, \textit{increasing families of valuations} appear as important tools for understanding the valuative tree (see \cite{Nart_1}, \cite{Nart_2}, \cite{CaioJosneiparametrizations} and \cite{josneicaio2}). For instance, each valuation $\nu\in \mathcal{T}$ can be achieved 
 by a process of successive augmentations that results in what is called  a \textit{Mac Lane-Vaquié chain}, which is a special type of increasing family of valuations (see \cite{Nart_2}). Our second main result deals with the notion of supremum of a  general increasing family in $\mathcal{T}_{\rm sme}$.

\begin{Result}(Theorem \ref{teoSupinTsme}) Every increasing family of valuations in $\mathcal{T}_{\rm sme}$ admits a supremum.
    
\end{Result}

Another topology on $\mathcal{T}$, that appears naturally, is obtained when we consider a topology in $\Lambda_\infty$ and look at $\mathcal{T}$  as a subspace of the product $(\Lambda_\infty)^{K[x]}$, endowed with the product topology.  More generally, we can study the set $\mathcal{V}_{R}(\Lambda)$ of all valuations on a ring $R$ taking values in $\Lambda_{\infty}$ with the induced product topology. Our third main result gives a criterion to decide when $\mathcal{V}_{R}(\Lambda)$ is closed in $(\Lambda_\infty)^{R}$.

\begin{Result}(Theorem \ref{criterio}) Let $\Lambda'$ be any submonoid of $\Lambda_{\infty}$ and take a topology $\mathfrak{U}$ on $\Lambda'$ such that
	\begin{description}
		\item [(P1)] the addition $+ : \Lambda'\times\Lambda'\longrightarrow\Lambda'$ is continuous, and
		\item [(P2)] for every $\gamma, \gamma' \in \Lambda'$ such that $\gamma<\gamma'$ there exist open sets $U, U' \in \mathfrak{U}$ such that $\gamma \in U, \gamma' \in U'$ and $U < U'$ (i.e., $u<u'$ for every $u \in U$ and $u'\in U$).
	\end{description}
	Then the set $\mathcal{V}_{R}(\Lambda')$ of valuations of $R$ taking values in $\Lambda'$ is closed in $(\Lambda')^R$.
\end{Result}

We now describe the structure of this paper. In Section \ref{SecPreliminaries}, we compile some definitions and results that are the basis of our study. In Section \ref{SecSpaceOfValuation}, we define $\mathbb{V}$ and study the valuation spectrum topology on it. We prove that the subsets $\mathbb{V}_{\rm vt}$ (of classes of \textit{value-transcendental} valuations) and $\mathbb{V}_{\rm rt}$ (of classes of \textit{residue-transcendental} valuations) are dense in $\mathbb{V}$ when equipped with the valuation spectrum topology (see Proposition \ref{PropVTRTDenseValSpecTop}).

In Section \ref{SecLambdaTree}, we explore connections between two different topologies on the valuative tree.  We study the weak tree topology and its relationship with the theory of key polynomials and tangent directions found in \cite{Nart_1} (see Proposition \ref{carc}).  Then, we define the Scott topology in the valuative tree and see that it is strictly coarser than the weak tree topology (see Proposition \ref{propScottCoarser}). Finally, we prove our first main result.

In Section \ref{SecTreeSME}, we present an overview of the construction of the subtree $\mathcal{T}_{\rm sme}$. In Section \ref{SecUpperBoundsSuprema}, we construct upper bounds for increasing families of valuations $\mathfrak{v}=\{\nu_i\}_{i\in I}$ on the tree $\mathcal{T}$ and define the supremum $\sup_{i\in I}\nu_i$ of $\mathfrak{v}$.  Then we prove our second main result.  We also show that $\sup_{i\in I}\nu_i$  can be seen as a limit of the family $\mathfrak{v}$ in the topologies presented in Section \ref{SecLambdaTree} (see Propositions \ref{PropSupLimScott} and \ref{propLimToSup}).

We begin Section \ref{SecClosenessCriterion} by proving our third main result. Then we study the induced order topology on the space $\mathcal{V}_{R}(\Lambda)$ and conclude that the tree  $\mathcal{T}(v,\Lambda)$ is closed in the product  $(\Lambda_\infty)^{K[x]}$ when considering the order topology on $\Lambda_\infty$ (see Corollary \ref{CorTreeisClosed}).


\section{Preliminaries}\label{SecPreliminaries}

Consider a commutative ring $R$ with unity and a totally ordered abelian group $\Gamma$. We denote by $\Gamma_{\infty}$ the set $\Gamma\cup \{\infty\}$ where $\infty$ is a symbol not belonging to $\Gamma$. We extend the addition and the order to $\Gamma_{\infty}$ by defining:
\begin{itemize}
\item $\gamma + \infty = \infty + \gamma = \infty$ for all $\gamma \in \Gamma_{\infty}$.

\item $\gamma < \infty$ for all $\gamma \in \Gamma$.

\end{itemize}

\begin{Def}
We say that a map $v: R\longrightarrow \Gamma_{\infty}$ is a \textbf{valuation} if it satisfies the following conditions:
\begin{description}
	\item [(V1)] $v(ab)=v(a)+v(b)$ for all $a, b \in R$.
	
	\item [(V2)] $v(a+b)\geq \min \{ v(a), v(b) \}$ for all $a, b \in R$.
	
	\item [(V3)] $v(1)=0$ and $v(0)=\infty$.
	
\end{description}
\end{Def}

The subgroup of $\Gamma$ generated by the set $\{v(a)\mid a \in R \text{ and }v(a)\neq \infty\}$ is called the \textbf{value group} of $v$ and is denoted by $\Gamma_v$. In the case where $R$ is a field, we have
\[
\Gamma_v=\{v(a)\mid a \in R \text{ and }v(a)\neq \infty\}.
\]
The set $\SU(v):=v^{-1}(\infty)$ is a prime ideal of $R$, called the \textbf{support} of $v$. If $\SU(v)=(0)$, then we say that $v$ is a \textbf{Krull} valuation (or that it has \textbf{trivial support}). Otherwise, the valuation is said to be \textbf{non-Krull} (or that it has \textbf{non-trivial support}).

 If $v$ is a Krull valuation, then $R$ is a domain and we can extend $v$ to $ K={\rm Quot}(R)$ by additivity (axiom \textbf{(V1)}). In this case, 
define the \textbf{valuation ring} of $v$ as 
$$\VR_v:=\{ a\in K\mid v(a)\geq 0 \}.$$ 
The ring $\VR_v$ is a local ring with unique maximal ideal $\MI_v=\{a\in K\mid v(a)>0 \}. $ We define the \textbf{residue field} of $v$ as the field $\VR_v/\MI_v$ and denote it by $ k_v$. The image of $a\in \VR_v$ in $ k_v$ is denoted by $av$.


\begin{Def}
We say that the valuations $v$ and $u$ on $R$ are \textbf{equivalent} (denoted $v\sim u$) if the following equivalent conditions are satisfied:
\begin{enumerate}
\item[\textbf{i)}] There exists an order-preserving isomorphism $\Phi: \Gamma_{u}\longrightarrow \Gamma_{v}$ such that $v=\Phi\circ u$.
\item[\textbf{ii)}] For all $a, b \in R$, $v(a)>v(b)$ if and only if $u(a)>u(b)$.
\end{enumerate}
We denote the equivalence class of $v$ by $[v]$.
\end{Def}


Let $(K,v)$ be a \textbf{valued field}, i.e., $K$ is a field and $v$ is a fixed valuation on $K$.  Consider 
$$\mathcal{V}=\mathcal{V}(v)=\{\nu:K[x]\to (\Gamma_\nu)_\infty\mid \nu \text{ is a valuation and } \nu|_K\sim v\},$$
  the set of all valuations on $K[x]$ whose restriction to $K$ is equivalent to $v$. We have an embedding $\Gamma_v\hookrightarrow \Gamma_\nu$ for every $\nu \in \mathcal{V}$. For simplicity of notation, we will consider this embedding to be an inclusion. Hence, $\mathcal{V}$ is the set of all valuations that extend $v$ to  $K[x]$.



Given $\mu \in \mathcal{V}$ with $\SU(\mu)=(0)$,
since $\mathcal{O}_v\subseteq  \mathcal{O}_{\mu}$ and $\mathfrak{m}_v= \mathfrak{m}_{\mu}\cap \mathcal O_v$, it follows that $k_{\mu}$ is a field extension of $k_v$. We can classify $\mu$ as follows:
\begin{itemize}
\item \textbf{Value-transcendental}, if $\Gamma_{\mu}/\Gamma_v$ is not a torsion group.
\item \textbf{Residue-transcendental}, if the extension $k_{\mu}/k_v$ is transcendental.
\item \textbf{Valuation-algebraic}, if $\Gamma_{\mu}/\Gamma_v$ is a torsion group and $k_{\mu}/k_v$ is algebraic.
\end{itemize}

We say that $\mu \in \mathcal{V}$ is \textbf{valuation-transcendental} if it is either value-transcendental or residue-transcendental. 

\begin{Obs}\label{desigualdadeAB}
    By Abhyankar's inequality (see [\cite{Zar_1}, p. 330]), a valuation cannot be simultaneously value-transcendental and residue-transcendental. Therefore, $\Gamma_{\mu}/\Gamma_v$ is a torsion group if $\mu$ is residue-transcendental.
\end{Obs}

Take the \textbf{divisible hull} $\Gamma_{\mathbb{Q}} := \Gamma_v \otimes_{\mathbb{Z}} \mathbb{Q}$ of $\Gamma_v$.


\begin{Obs}\label{obs2}$\,$

\begin{itemize}
    \item For $\mu \in \mathcal{V}$, if $\mu$ is valuation-algebraic, then by definition $\Gamma_{\mu} \subseteq \Gamma_{\mathbb{Q}}$. By Remark~\ref{desigualdadeAB}, the same inclusion $\Gamma_{\mu} \subseteq \Gamma_{\mathbb{Q}}$ holds when $\mu$ is residue-transcendental.

    \item By direct calculations, one can prove that if $\Gamma_{\mu}/\Gamma_v$ is not torsion, then $\SU(\mu)=(0)$. That is, value-transcendental valuations are the only ones whose image is not contained in the $\Gamma_{\mathbb{Q}}$.
\end{itemize}
	
\end{Obs}

Next lemma follows from direct calculations. 

\begin{Lema}\label{equi}
	Let $\mu, \nu$ be equivalent valuations. Then:
	\begin{enumerate}
		\item[(a)] If $\mu$ is not a Krull valuation, then $\nu$ is not a Krull valuation.
		
		\item[(b)] If $\mu$ is value-transcendental, then $\nu$ is value-transcendental.
		
		\item[(c)] If $\mu$ is residue-transcendental, then $\nu$ is residue-transcendental.
		
		\item[(d)] If $\mu$ is valuation-algebraic, then $\nu$ is valuation-algebraic.
	\end{enumerate}
\end{Lema}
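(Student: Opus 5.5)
The plan is to use characterization \textbf{i)} of equivalence. We fix an order-preserving isomorphism $\Phi:\Gamma_\mu\to\Gamma_\nu$ with $\nu=\Phi\circ\mu$, where we set $\Phi(\infty)=\infty$. Since the restrictions of both valuations to $K$ are $v$ (with $\Gamma_v$ regarded as a subgroup of both value groups), $\Phi$ should restrict to the identity on $\Gamma_v$. The reason is that $\nu(a)=\Phi(\mu(a))$ for $a\in K$, and both sides equal $v(a)$. Each item then reduces to an invariance statement about $\Phi$.

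For (a), an element $a$ lies in $\SU(\nu)$ if and only if $\Phi(\mu(a))=\infty$, which happens if and only if $\mu(a)=\infty$. Hence $\SU(\nu)=\SU(\mu)$, so one support is trivial exactly when the other is.

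For (b), $\Phi$ is a group isomorphism $\Gamma_\mu\to\Gamma_\nu$ that fixes $\Gamma_v$ pointwise. It therefore induces an isomorphism $\Gamma_\mu/\Gamma_v\cong\Gamma_\nu/\Gamma_v$, and so one quotient is torsion if and only if the other is.

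For (c), by (a) both valuations are Krull. We extend them to $K(x)$, and the extensions remain equivalent via the same $\Phi$, since $\nu(f/g)=\Phi(\mu(f))-\Phi(\mu(g))=\Phi(\mu(f/g))$. Because $\Phi$ preserves order and sends $0$ to $0$, we get $\mu(a)\ge0$ if and only if $\nu(a)\ge0$, and $\mu(a)>0$ if and only if $\nu(a)>0$. Hence $\VR_\mu=\VR_\nu$ and $\MI_\mu=\MI_\nu$, so $k_\mu=k_\nu$ as extensions of $k_v$. In particular, $k_\mu/k_v$ is transcendental if and only if $k_\nu/k_v$ is.

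For (d), valuation-algebraic means the valuation is Krull, the quotient is torsion, and the residue extension is algebraic. The arguments for (b) and (c) give each of these conditions.

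The only delicate point is justifying that $\Phi$ restricts to the identity on $\Gamma_v$. This follows from the convention that $\nu|_K=v=\mu|_K$ after identifying $\Gamma_v$ inside each value group, as set up in the definition of $\mathcal V$. Without that convention we would obtain only an isomorphism of quotients induced by some automorphism of $\Gamma_v$, and that still suffices for torsionness. Either way the argument is routine.
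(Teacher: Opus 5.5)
Your proof is correct and is exactly the direct verification the paper has in mind; the paper writes out no argument, saying only that the lemma ``follows from direct calculations.'' Your use of the isomorphism $\Phi$ from characterization i) supplies those calculations: it gives equal supports, an induced isomorphism $\Gamma_\mu/\Gamma_v\cong\Gamma_\nu/\Gamma_v$, and identical valuation rings and maximal ideals in $K(x)$. Your worry about conventions is also unnecessary, since $\nu=\Phi\circ\mu$ already gives $\Phi(\mu(K^\times))=\nu(K^\times)$ directly.
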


\begin{Obs}\label{equi_fecho}
Given $\mu, \nu \in \mathcal{V}$ such that $\mu \sim \nu$ and $\Gamma_{\mu}, \Gamma_{\nu} \subset \Gamma_{\mathbb{Q}}$, it follows from a direct calculation 
that $\mu = \nu$.
\end{Obs}

	
	
	

\section{The set $\mathbb{V}$}\label{SecSpaceOfValuation}

One of the objects of our study is the set
\[
\mathbb{V}=\mathbb{V}(v) := \{[\mu] \mid \mu \in \mathcal{V}(v)\}
\]
consisting of all equivalence classes of valuations that extend $v$.
We will equip $\mathbb{V}$ with a topology, as follows.



\begin{Def}
	The \textbf{valuation spectrum topology} on $\mathbb{V}$ is the topology having as a subbasis the sets of the form
	$$U(f,g):=\{[\nu]\in \mathbb{V}\mid \nu(f)\geq \nu(g)\neq \infty\} $$
	where $f$ and $g$ run over $K[x]$.
\end{Def}

Let $\mathbb{V}_{\mathrm{nt}}$, $\mathbb{V}_{\mathrm{vt}}$, $\mathbb{V}_{\mathrm{rt}}$ and $\mathbb{V}_{\mathrm{al}}$ be the subsets of $\mathbb{V}$ consisting of classes of nontrivial support, value-transcendental, residue-transcendental and valuation-algebraic valuations, respectively. A consequence of Lemma \ref{equi} is that
\[
\mathbb{V} = \mathbb{V}_{\mathrm{nt}} \sqcup \mathbb{V}_{\mathrm{vt}} \sqcup \mathbb{V}_{\mathrm{rt}} \sqcup \mathbb{V}_{\mathrm{al}}.
\]
We want to study the topological behavior of the sets $\mathbb{V}_{\mathrm{vt}}$, $\mathbb{V}_{\mathrm{rt}}$  inside $\mathbb{V}$. For this, we will need  a lemma.

Given a valuation $\nu$ on $K[x]$ and $q\in K[x]$ monic non-constant, consider the map 
$$\nu_q(f):=\underset{0\leq j\leq r}{\min} \{ \nu(f_jq^j)  \},$$
for $f = f_0+f_1q+\ldots+f_rq^r \in K[x]$ written in its $q$-expansion (i.e. $\deg(f_j)<\deg(q)$ for every $j$, $0\leq j \leq r$). This map is called the \textbf{truncation} of $\nu$ at $q$.

\begin{Lema}\label{lemqnSobrecTransc}
	Suppose that there exist $q\in K[x]$, $c\in K^{\times}$ and $n\in \N$ such that $\nu=\nu_q$ and $n\nu(q)=v(c)$. Then $(q^n/c)\nu\in k_\nu$ is transcendental over $k_v$. In particular, $\nu$ is residue-transcendental.
\end{Lema}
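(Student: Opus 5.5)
We argue by contradiction with a degree computation in $q$-expansions. Put $\xi=(q^n/c)\nu$. First, $q^n/c$ lies in $\VR_\nu$ and is a unit, since $\nu(q^n/c)=n\nu(q)-v(c)=0$. Note also that $\nu(q)\neq\infty$: otherwise $v(c)=\infty$, contradicting $c\in K^\times$. Moreover $\nu(f)\ne\infty$ for every nonzero $f$, because $\nu=\nu_q$ and $\nu_q(f)$ is a finite minimum of terms $\nu(f_j)+j\nu(q)$ with $f_j\in K[x]$ nonzero of degree $<\deg q$. Here we use that $\nu(f_j)<\infty$, which holds because any nonzero element of the support would have degree at least $\deg q$; the minimal-degree generator $g$ of $\SU(\nu)$ satisfies $\nu_q(g)<\infty$ if $\deg g<\deg q$. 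Hence $\nu$ is Krull and $k_\nu$ makes sense.

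Suppose $\xi$ is algebraic over $k_v$. Then there are $a_0,\dots,a_m\in\VR_v$, not all in $\MI_v$, with $\sum_i (a_i v)\xi^i=0$ in $k_\nu$. This means
\[
\nu\Bigl(\sum_{i=0}^m a_i\,(q^n/c)^i\Bigr)>0,
\]
or equivalently, multiplying by $c^m$,
\[
\nu\Bigl(\sum_{i=0}^m a_i c^{m-i} q^{ni}\Bigr)>mv(c).
\]
The polynomial $F=\sum_i a_ic^{m-i}q^{ni}$ is already written in its $q$-expansion. Its coefficients $a_ic^{m-i}$ are constants, so of degree $0<\deg q$, and they sit in the distinct exponents $ni$.

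Since $\nu=\nu_q$, we get
\[
\nu(F)=\min_i\{v(a_i)+(m-i)v(c)+ni\nu(q)\}=\min_i\{v(a_i)\}+mv(c),
\]
using $ni\nu(q)=iv(c)$. Because some $a_i$ is a unit of $\VR_v$, this minimum is $0$, so $\nu(F)=mv(c)$, which contradicts the strict inequality. Therefore $\xi$ is transcendental over $k_v$, so $k_\nu/k_v$ is transcendental and $\nu$ is residue-transcendental.

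The only delicate point is the bookkeeping that $F$ is genuinely in $q$-expansion form, so that the hypothesis $\nu=\nu_q$ applies termwise. This holds because $\deg q\ge1$ and the coefficients lie in $K$. The check that $\nu$ has trivial support is also routine, so I expect no real obstacle.
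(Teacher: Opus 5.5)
Your transcendence argument is correct and is essentially the paper's own. The paper applies $\nu=\nu_q$ directly to $\sum_j b_j c^{-j}q^{nj}$, which is already a $q$-expansion with constant coefficients, to get $\nu\bigl(b_j(q^n/c)^j\bigr)>0$ for every $j$ and hence $b_j\nu=0$. You instead clear denominators by $c^m$ and compare with $mv(c)$, which is the same computation.

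The step that fails as written is your side check that $\nu$ is Krull; the paper leaves this tacit when it writes $k_\nu$. You assert that the generator $g$ of $\SU(\nu)$ satisfies $\nu_q(g)<\infty$ when $\deg g<\deg q$. This is false: in that case $g$ is its own $q$-expansion, so $\nu_q(g)=\nu(g)=\infty$. No contradiction arises in exactly the case you need to exclude.

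Use a different element of the support instead. Take $g$ monic and set $h=x^{\deg q-\deg g}g$. Then $h$ is monic of degree $\deg q$, so its $q$-expansion is $1\cdot q+(h-q)$, and
\[
\nu_q(h)\le\nu(q)<\infty=\nu(h),
\]
contradicting $\nu=\nu_q$. Here $\nu(q)<\infty$ because $n\nu(q)=v(c)$ with $c\neq 0$. Combined with your argument for $\deg g\ge\deg q$, which is correct, this gives $\SU(\nu)=(0)$. The rest of your proof then goes through unchanged.
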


\begin{proof}
	Suppose there exist	 $b_0, \ldots , b_r\in \mathcal{O}_{v}$ such that
	$$ \sum_{j=0}^r (b_j\nu)\left(\frac{q^n}{c}\nu\right)^j = 0 \text{ in } k_\nu.$$
	Hence,
	$$\nu \left(\sum_{j=0}^r b_j\left(\frac{q^n}{c}\right)^j\right)>0. $$
	Since $\nu=\nu_q$, for every $j$, $0\leq j\leq r$,  we have
	$$\nu\left(b_j\left(\frac{q^n}{c}\right)^j\right)\geq \underset{0\leq i\leq r}{\min} \left\lbrace\nu\left(b_i\left(\frac{q^n}{c}\right)^i\right)  \right\rbrace = \nu \left(\sum_{i=0}^r b_i\left(\frac{q^n}{c}\right)^i\right)>0. $$
	Hence,
	$ (b_j\nu)((q^n/c)\nu)^j = (b_j(q^n/c)^j)\nu = 0$. Since $(q^n/c)\nu \neq 0$, it follows that  $b_j\nu =0$ for every $j$. We conclude that  $(q^n/c)\nu$ is transcendental over $k_v$. In particular, $\nu$ is residue-transcendental.
	
\end{proof}

\begin{Prop}\label{PropVTRTDenseValSpecTop}
	The sets $\mathbb V_{\rm vt}$ and $\mathbb V_{\rm rt}$ are dense in $\mathbb{V}$ with the valuation spectrum topology. 
\end{Prop}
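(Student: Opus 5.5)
The plan is to show that every nonempty basic open set
$$W=U(f_1,g_1)\cap\cdots\cap U(f_m,g_m)$$
contains the class of a value-transcendental valuation and the class of a residue-transcendental one. Fix $[\nu]\in W$. Let $d$ be the maximum of the degrees of the $f_i,g_i$. The idea is to modify $\nu$ only on polynomials of large degree (or only ``at infinity'') while keeping it unchanged on the finitely many relevant polynomials. For this I will use key polynomials in the sense of Novacoski–Spivakovsky, and assume three standard facts: (i) if $q$ is a key polynomial for $\nu$, then $\nu_q$ is a valuation; (ii) $\nu_q(f)=\nu(f)$ whenever $\deg f<\deg q$; (iii) for any $\gamma$ in an ordered group containing $\Gamma_\nu$ with $\gamma\ge \nu_q(q)$ (in the appropriate sense), the augmentation
$$\mu_\gamma\Bigl(\sum_j a_jq^j\Bigr)=\min_j\{\nu(a_j)+j\gamma\},\qquad \deg a_j<\deg q,$$
is a valuation extending $v$.

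\emph{Step 1: choose $q$.} If $\nu$ has a key polynomial of degree $>d$, let $q$ be one. Then every $\mu_\gamma$ agrees with $\nu$ on all $f_i,g_i$, so $[\mu_\gamma]\in W$.

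Otherwise, the set of degrees of key polynomials is bounded. In that case there is a key polynomial $q$ with $\nu=\nu_q$; this is the limit/maximal case of the Mac Lane–Vaquié description, and it includes the non-Krull case, where $q$ is a generator of the support. If $\nu(q)<\infty$, I will take $\mu_\gamma$ with $\gamma$ close to $\nu(q)$, or simply $\gamma=\nu(q)$. If $\nu(q)=\infty$, I will take $\gamma$ larger than every finite value $\nu(a_j)+j\nu(q)$ occurring in the $q$-expansions of the $f_i,g_i$. In both subcases I must check the defining inequalities $\mu_\gamma(f_i)\ge\mu_\gamma(g_i)\ne\infty$. Since $\nu(g_i)<\infty$, the minimum in the expansion of $g_i$ is attained at terms that do not involve $q$ to a power with infinite value, so $\mu_\gamma(g_i)=\nu(g_i)$. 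Moreover $\mu_\gamma(f_i)$ is either $\nu(f_i)$ or at least $\min\{\nu(f_i),\gamma\}$, and this is at least $\nu(g_i)$ once $\gamma$ is large.

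\emph{Step 2: the two choices of $\gamma$.} For the value-transcendental case, take $\Lambda'=\Gamma_\nu\oplus\mathbb Z\varepsilon$ ordered lexicographically, with $\varepsilon$ positive and infinitesimal or infinitely large as required by Step 1. Set $\gamma=\nu(q)+\varepsilon$ (or $\gamma=\varepsilon$ large in the $\nu(q)=\infty$ case). Then $\gamma$ is non-torsion modulo $\Gamma_v$, since $\mu_\gamma(q)=\gamma$, so $\mu_\gamma$ is value-transcendental. For the residue-transcendental case, take $\gamma\in\Gamma_{\mathbb Q}$, chosen close enough to $\nu(q)$ or large enough (possible because $\Gamma_{\mathbb Q}$ is divisible), so that $n\gamma=v(c)$ for some $n\in\N$, $c\in K^\times$. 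By construction $\mu_\gamma=(\mu_\gamma)_q$, so Lemma \ref{lemqnSobrecTransc} gives that $\mu_\gamma$ is residue-transcendental.

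\emph{Main obstacle.} I expect the hardest part to be the case where $\nu$ has no key polynomial of degree exceeding $d$ and the inequalities $\nu(f_i)\ge\nu(g_i)$ may be equalities achieved by terms involving $q$. There, perturbing $\gamma$ away from $\nu(q)$ could break one of the inequalities. My first attempt will be to choose $\gamma\ge\nu(q)$, with the perturbation on the positive side. This makes $\mu_\gamma\ge\nu_q=\nu$ everywhere, which preserves $\mu_\gamma(f_i)\ge\nu(f_i)\ge\nu(g_i)$. I will then check that $\mu_\gamma(g_i)=\nu(g_i)$ for a sufficiently small positive perturbation in the finitely many expansions. This requires the minimum for $g_i$ to be attained at a $q$-degree-zero term, or else that the perturbation lowers nothing. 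If that fails, I will instead perturb the coefficients through a key polynomial $q'$ of degree equal to $\deg q$ that approximates $q$ closely.
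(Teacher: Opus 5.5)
You are right to work with a finite intersection $W=\bigcap_i U(f_i,g_i)$, since the sets $U(f,g)$ only form a subbasis. The paper's proof goes a different way, and in fact treats only one set $U(f,g)$ (which it calls basic). For $h=f/g\notin K$ it puts on $K(h)$ either the Gauss valuation in $h$ or a valuation with $\nu'(h)$ non-torsion over $\Gamma_v$, and extends it along the algebraic extension $K(x)/K(h)$; no key polynomials appear. That trick does not obviously handle several quotients $f_i/g_i$ at once, so a ``local modification'' argument like yours is a reasonable route. As written, however, it has genuine gaps.

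(1) The dichotomy in Step~1 is false: bounded degrees of key polynomials do not give $\nu=\nu_q$. Take $K=\mathbb{Q}$, $v=v_p$ and $\nu(f)=v_p(f(\xi))$ with $\xi\in\mathbb{Z}_p$ transcendental over $\mathbb{Q}$. Every key polynomial is linear: one of degree $\ge 2$ would need $\epsilon(Q)>\epsilon(x-a)=\nu(x-a)$ for all $a\in\mathbb{Q}$, and these values are unbounded. Yet each $\nu_{x-a}$ is residue-transcendental by Lemma~\ref{lemqnSobrecTransc}, while $\nu$ is immediate, so $\nu\neq\nu_q$ for every key polynomial $q$. What you need there is a key polynomial $q$ such that $\nu_q$ already has the same values as $\nu$ on all $f_i,g_i$. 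Such $q$ exists by stabilization along a complete set of key polynomials, and this $\nu_q$ is itself residue-transcendental.

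(2) Your fact (iii) only allows $\gamma\ge\nu_q(q)$. It yields no finite $\gamma$ when $\nu(q)=\infty$, and no rational one when $\nu(q)$ exceeds all of $\Gamma_{\mathbb{Q}}$; dropping the restriction is not harmless. Take $v=v_7$ on $\mathbb{Q}$, $\alpha\in\mathbb{Z}_7$ with $\alpha^2=2$, $\nu(f)=v_7(f(\alpha))$ and $q=x^2-2$. If $a\in\mathbb{Z}$ has $v_7(a-\alpha)=N>\gamma$, then $\mu_\gamma((x-a)(x+a))=\min\{N,\gamma\}=\gamma<N=\mu_\gamma(x-a)+\mu_\gamma(x+a)$. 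So $\mu_\gamma$ is a valuation for no $\gamma\in\Gamma_{\mathbb{Q}}$, and a residue-transcendental point of $W$ must be found differently, e.g.\ as a truncation $\nu_{x-a}$ with $a$ close to $\alpha$.

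(3) The obstacle you flag is real, and your fallback does not remove it. For the Gauss valuation ($q=x$, $\nu(x)=0$) and $W=U(x,1)\cap U(1,x)=\{[\mu]\mid \mu(x)=0\}$, every $\mu_\gamma$ with $\gamma\neq 0$ leaves $W$. A $q'$ of the same degree with $\nu(q-q')>\nu(q)$ has ${\rm in}_\nu q'={\rm in}_\nu q$. It gives $[\nu;q',\gamma](x)=\min\{\gamma,\nu(q-q')\}>0$, the same failure.

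The missing idea is to augment in a \emph{different tangent direction}. If $\phi\in\mathrm{KP}(\nu)$ and $\gamma>\nu(\phi)$, then $[\nu;\phi,\gamma](f)=\nu(f)$ exactly when ${\rm in}_\nu\phi\nmid{\rm in}_\nu f$. A residue-transcendental $\nu$ has infinitely many pairwise non-equivalent key polynomials (they correspond to the maximal ideals of a polynomial ring in one variable over a field). The finitely many ${\rm in}_\nu f_i$, ${\rm in}_\nu g_i$ have only finitely many prime factors, so you can choose $\phi$ avoiding all of them. In the example $\phi=x-1$ works. In general $\phi$ may need degree larger than $\deg q$, e.g.\ $\phi=x^2+x+1$ when $k_v=\mathbb{F}_2$ and $W$ also forces $\mu(x+1)=0$. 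Then $\gamma=\nu(\phi)+\varepsilon$, with $\varepsilon$ a new infinitesimal, gives a value-transcendental class in $W$.

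A repaired version of your plan therefore has two steps.
\begin{enumerate}
\item Find a residue-transcendental point of $W$. This is trivial if $\nu$ is residue-transcendental. In the valuation-algebraic and non-Krull cases, use a suitable truncation. In the value-transcendental case, replace the non-rational value $\nu(q)$ by a suitable element of $\Gamma_{\mathbb{Q}}$, justified by the piecewise-linear dependence of $\min_j\{\nu(a_j)+j\gamma\}$ on $\gamma$.
\item Apply the tangent-direction augmentation above to get a value-transcendental point of $W$.
\end{enumerate}
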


\begin{proof}Take a non-empty basic open set $U(f,g)$. We need to show that there exists at least one residue-transcendental valuation class and one value-transcendental valuation class in $U(f,g)$. 
	
	If $f/g\in K$, then $f=ag$ for some $a\in K$. Take any $[\nu]\in U(f,g)$.   Hence, $\nu(a)+\nu(g)=\nu(f)\geq \nu(g)$, which implies $\nu(a)\geq 0$. Since $\nu|_K=v$, we have $v(a)\geq 0$. Take any $[\nu']\in\mathbb{V}$. Since $\nu'|_K=v$, we also have $\nu'(a)\geq 0$. Hence, $\nu'(f)\geq \nu'(g)$ and $[\nu']\in U(f,g)$. Therefore, $U(f,g)=\mathbb{V}$ and we are done.

	Suppose $h=f/g\not \in K$.
	The extension $K(h)\mid K$ is transcendental and the extension $K(x)\mid K(h)$ is algebraic.  We will construct a valuation on $K(h)$ with the desired property and then take an extension to $K(x)$. 
	
	\begin{itemize}
		\item 	\noindent A value-transcendental valuation in $U(f,g)$: Consider the map 
		$$\nu'(a_rh^r+\ldots+ a_0):=\min_{0\leq j \leq r}\{ v(a_j) +j(1,0) \}.  $$
		This is a valuation on $K(h)$ with values in $\Z\oplus \Gamma$ such that $\nu'(h)=(1,0)$ is not a torsion element over $\Gamma\cong (0)\oplus \Gamma$. Take any extension $\nu$ of $\nu'$ to $K(x)$. The valuation $\nu$ is  value-transcendental. 
		
		\item 	\noindent  A residue-transcendental valuation in $U(f,g)$: Consider the map
		$$\nu'(a_rh^r+\ldots+ a_0):= \min_{0\leq j \leq r} \{v(a_j)\}. $$
		This is a valuation on $K(h)$ (it is the \textit{Gauss valuation} with $h$ in the place of $x$). By the definition of truncation, we have $(\nu')_h=\nu'$ and $\nu'(h)=\nu'(1)=0$. By the same reasoning used in the proof of Lemma \ref{lemqnSobrecTransc}, we see that $h\nu'\in k_{\nu'}$ is transcendental over $k_v$. We extend $\nu'$ to $\nu$ in $K(x)$ and we still have $k_\nu/k_v$ transcendental. 
		\smallskip

	\end{itemize}
	
	In both cases, $[\nu]\in \mathbb{V}$ and $\nu(h)\geq 0$. Hence, $[\nu]\in U(f,g)$ and  the result follows.
	
\end{proof}

Now consider the subset of Krull valuations
\[
\mathbb{V}_{\mathrm{Krull}}:=\mathbb{V}_{\mathrm{vt}} \sqcup \mathbb{V}_{\mathrm{rt}} \sqcup \mathbb{V}_{\mathrm{al}}\subseteq\mathbb{V},
\]
on which we will define the Zariski topology.

\begin{Def}
	The \textbf{Zariski topology} on $\mathbb{V}_{\mathrm{Krull}}$ is the restriction of the valuation spectrum topology from $\mathbb{V}$.
\end{Def}

\begin{Obs}
One can see that 
	$$U(f):=\{[\nu]\in \mathbb{V}_{\mathrm{Krull}}\mid \nu(f)\geq 0\} $$
	where $f$ run over $K(x)$ form a subbasis for the Zariski topology.
\end{Obs}

We have the following corollary as a consequence of Proposition~\ref{PropVTRTDenseValSpecTop}.




\begin{Cor}
	The sets $\mathbb V_{\rm vt}$ and $\mathbb V_{\rm rt}$ are dense in $\mathbb{V}_{\mathrm{Krull}}$ with the Zariski topology. 
\end{Cor}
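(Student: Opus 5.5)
Since the Zariski topology on $\mathbb{V}_{\mathrm{Krull}}$ is by definition the subspace topology induced by the valuation spectrum topology on $\mathbb{V}$, the plan is to obtain the corollary directly from Proposition~\ref{PropVTRTDenseValSpecTop} via a general fact: if $D\subseteq Y\subseteq X$ and $D$ is dense in $X$, then $D$ is dense in $Y$ with the subspace topology. First note that $\mathbb{V}_{\rm vt}$ and $\mathbb{V}_{\rm rt}$ are indeed contained in $\mathbb{V}_{\mathrm{Krull}}$, by the decomposition $\mathbb{V}_{\mathrm{Krull}}=\mathbb{V}_{\mathrm{vt}} \sqcup \mathbb{V}_{\mathrm{rt}} \sqcup \mathbb{V}_{\mathrm{al}}$ (which is well defined on classes by Lemma~\ref{equi}).

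The key step is the general density fact. Let $W$ be a non-empty open subset of $\mathbb{V}_{\mathrm{Krull}}$. By definition of the subspace topology, $W=O\cap \mathbb{V}_{\mathrm{Krull}}$ for some open $O\subseteq\mathbb{V}$. Since $W\neq\emptyset$, $O$ is a non-empty open subset of $\mathbb{V}$. By Proposition~\ref{PropVTRTDenseValSpecTop}, $O$ meets $\mathbb{V}_{\rm vt}$, and also meets $\mathbb{V}_{\rm rt}$. Because both sets lie inside $\mathbb{V}_{\mathrm{Krull}}$, we get $O\cap\mathbb{V}_{\rm vt}=W\cap\mathbb{V}_{\rm vt}\neq\emptyset$, and similarly for $\mathbb{V}_{\rm rt}$. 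Hence both sets are dense in $\mathbb{V}_{\mathrm{Krull}}$.

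There is essentially no obstacle. The only point requiring care is the definitional one: the argument must use the subspace topology, not the subbasis $U(f)$ of the Remark. Alternatively, one could check directly that each non-empty $U(f)$, for $f\in K(x)$, contains both kinds of valuations. Writing $f=f_1/f_2$ gives $U(f)=U(f_1,f_2)\cap\mathbb{V}_{\mathrm{Krull}}$, and the valuations constructed in the proof of Proposition~\ref{PropVTRTDenseValSpecTop} are Krull, so this route gives the same conclusion. Still, the subspace argument is cleaner and is the one I would write.
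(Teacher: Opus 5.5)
Your proof is correct and is exactly the argument the paper has in mind: the paper states the corollary as an immediate consequence of Proposition~\ref{PropVTRTDenseValSpecTop}, since the Zariski topology is the subspace topology on $\mathbb{V}_{\mathrm{Krull}}$ and a dense subset of $\mathbb{V}$ contained in $\mathbb{V}_{\mathrm{Krull}}$ is dense there. One caution about your alternative route: checking that each non-empty subbasic set $U(f)$ meets $\mathbb{V}_{\rm vt}$ and $\mathbb{V}_{\rm rt}$ does not by itself give density, because density must be checked on finite intersections of such sets, and the construction in the proof of Proposition~\ref{PropVTRTDenseValSpecTop} only treats a single $U(f,g)$. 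So the subspace argument you chose is the right one to write.
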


 
\section{Topologies on $\Lambda$-trees}\label{SecLambdaTree}

\subsection{$\Lambda$-trees}

Fix an ordered abelian group $\Lambda$ containing $\Gamma_v$. Consider the following subset of $\mathcal{V}= \mathcal{V}(v)$:
\[
\mathcal{T}=\mathcal{T}(v, \Lambda) := \{\mu \in \mathcal{V} \mid \Gamma_{\mu} \subseteq \Lambda\}.
\]
In some cases, we will consider $\Lambda$ to be a monoid; unless otherwise stated, assume it is a group. The set $\mathcal{T}$ carries a partial order: for all $\mu, \nu \in \mathcal{T}$, we define
\[
\mu \leq \nu \quad \Longleftrightarrow \quad \mu(f) \leq \nu(f) \text{ for all } f \in K[x].
\]
Using the terminology of \cite{Nart_1}, we say that $\mathcal{T}$ is a ($\Lambda$-)\textbf{tree} since the intervals
\[
(-\infty, \mu] := \{\nu \in \mathcal{T} \mid \nu \le \mu\}
\]
are totally ordered for every $\mu \in \mathcal{T}$  \cite[Theorem 2.4]{Nart_2}. We will also refer to $\mathcal{T}$ as a \textbf{valuative tree}.

\subsection{The weak tree topology}

Let $\Lambda$ be an ordered abelian group containing the divisible hull $\Gamma_{\mathbb{Q}}$ of $\Gamma_v$. Given $\mu, \nu \in \mathcal{T}(v, \Lambda)$, denote by $$\mu\wedge \nu:=\inf\{\mu, \nu\}=\max\{(-\infty, \mu]\cap (-\infty, \nu]\},$$ if it exists.

\begin{Prop}\cite[Proposition 5.2]{Nart_1}\label{LemExisteJoint}
For all $\mu, \nu \in \mathcal{T}(v, \Lambda)$, there exists $\mu\wedge\nu$.
\end{Prop}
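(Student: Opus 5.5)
Throughout, $\Lambda\supseteq\Gamma_{\mathbb Q}$ is the standing assumption of this subsection.

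The plan is to construct $\mu\wedge\nu$ directly as a truncation of $\mu$ (or of $\nu$), following the key polynomial/MacLane--Vaquié philosophy. Since $(-\infty,\mu]$ is totally ordered by \cite[Theorem 2.4]{Nart_2}, it suffices to show that the set $S:=(-\infty,\mu]\cap(-\infty,\nu]$ is non-empty and has a largest element. For non-emptiness, we use the Gauss-type valuations: take any $a\in K$ and $\gamma\in\Lambda$ with $\gamma$ below both $\mu(x-a)$ and $\nu(x-a)$ (replacing $\mu(x-a)$ by a large element of $\Lambda$ if it is $\infty$). Then the valuation $\omega_{a,\gamma}(\sum a_j(x-a)^j)=\min\{v(a_j)+j\gamma\}$ lies in $\mathcal T$ and satisfies $\omega_{a,\gamma}\le\mu,\nu$ by (V2). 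Thus $S\neq\emptyset$, and $S$ is a totally ordered subset of $(-\infty,\mu]$.

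Next, if $\mu$ and $\nu$ are comparable we are done, so assume they are not. Then $S\subsetneq(-\infty,\mu]$. The candidate for the maximum is obtained as follows. Consider the set of monic polynomials $f$ with $\mu(f)\neq\nu(f)$, and choose $\phi$ of minimal degree among them. Set $\gamma:=\min\{\mu(\phi),\nu(\phi)\}$. We expect $\mu$ and $\nu$ to agree on all polynomials of degree $<\deg\phi$. Now define $\rho$ on $\phi$-expansions by
\[
\rho(f_0+f_1\phi+\dots+f_r\phi^r)=\min_j\{\mu(f_j)+j\gamma\}.
\]
This is the augmented (or truncated) valuation $[\mu_\phi;\phi,\gamma]$. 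The steps are then as follows. (i) Show that $\phi$ is a key polynomial for $\mu_\phi$, or replace $\phi$ by a suitable key polynomial of the same degree, so that $\rho$ is a valuation. (ii) Show that $\rho\le\mu$ and $\rho\le\nu$: using the agreement below degree $\deg\phi$ and $\gamma\le\mu(\phi),\nu(\phi)$, we get $\mu(f)\ge\min_j\mu(f_j\phi^j)\ge\rho(f)$, and similarly for $\nu$. (iii) Show that every $\eta\in S$ satisfies $\eta\le\rho$. For this, take $\eta\in S$. Since $\eta\le\mu$ and $\eta$ lies below both valuations, standard results on truncations ($\eta\le\mu$ implies that $\eta$ is determined by a key polynomial of degree at most the relevant one) give $\eta(\phi)\le\gamma$ and $\eta(f_j)\le\mu(f_j)$. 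Hence $\eta(f)\le\rho(f)$ on expansions, provided $\eta$ equals its own truncation at $\phi$ whenever $\eta(\phi)<\gamma$.

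The main obstacle is step (iii) together with (i). The difficulty is that a polynomial of minimal degree where $\mu$ and $\nu$ differ need not be a key polynomial, and a comparison $\eta\le\mu$ does not automatically control $\eta$ via $\phi$-expansions. I would first try to reduce to the case where $\phi$ is a key polynomial for $\mu\wedge$-candidates, using the MacLane--Vaquié chains of $\mu$ from \cite{Nart_2}. Concretely, write $\mu$ as the limit of its chain of augmentations. Then take the last valuation $\mu_i$ in that chain that satisfies $\mu_i\le\nu$, and show that the maximum of $S$ is an augmentation of $\mu_i$ with value $\min\{\mu(\phi_{i+1}),\nu(\phi_{i+1})\}$. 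Alternatively, when the chain has a limit step (no last such $\mu_i$), the maximum is given by the corresponding limit augmentation. This limit case, which produces continuous families with no largest member a priori, is where the hypothesis $\Lambda\supseteq\Gamma_{\mathbb Q}$ should be used to guarantee that the required values lie in $\Lambda$.
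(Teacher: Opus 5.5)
The paper gives no argument here; the statement is imported from \cite[Proposition 5.2]{Nart_1}. Judged on its own, your proposal is not yet a proof. The two steps that carry all the content, (i) and (iii), are only announced.

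Your candidate $\rho$ is the right one, but the mechanism you suggest for (i) does not make sense as stated. Since $\gamma=\min\{\mu(\phi),\nu(\phi)\}\le\mu(\phi)=\mu_\phi(\phi)$, $\rho$ is never an augmentation of $\mu_\phi$; that would require a value strictly above $\mu_\phi(\phi)$. Because $\mu$ and $\nu$ agree in degrees $<n:=\deg\phi$, $\rho$ is just the truncation $\mu_\phi$ or $\nu_\phi$, according as $\mu(\phi)<\nu(\phi)$ or $\nu(\phi)<\mu(\phi)$. So ``show that $\phi$ is a key polynomial for $\mu_\phi$'' presupposes that a truncation is a valuation, which is exactly what (i) must establish. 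The alternative ``replace $\phi$ by a suitable key polynomial'' is never explained. Step (iii) rests on the unproved proviso $\eta=\eta_\phi$ for all $\eta\in S$. The Mac Lane--Vaqui\'e fallback (last $\mu_i\le\nu$, limit steps) is not carried out, and the role you assign there to $\Lambda\supseteq\Gamma_{\mathbb Q}$ is guesswork.

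The missing idea is to let one valuation police the other on division by $\phi$. Say $\nu(\phi)<\mu(\phi)$, so $\rho=\nu_\phi$. For $\deg a,\deg b<n$ write $ab=q\phi+r$ with $\deg q,\deg r<n$. Then $\mu$ and $\nu$ agree on $a,b,q,r$, hence on $ab$, while $\mu(q\phi)>\nu(q\phi)$ if $q\neq0$.
If $\nu(r)<\nu(ab)$, then $\nu(q\phi)=\nu(r)=\mu(r)<\mu(q\phi)$, forcing $\mu(ab)=\mu(r)<\mu(ab)$.
If $\nu(r)>\nu(ab)$, then $\mu(q\phi)=\mu(ab)$, forcing $\nu(ab)=\nu(q\phi)<\nu(ab)$.
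Hence $\nu(r)=\nu(ab)\le\nu(q\phi)$.

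Now put $f_ig_j=q_{ij}\phi+r_{ij}$, so that $fg=\sum_k c_k\phi^k$ with $c_k=\sum_{i+j=k}r_{ij}+\sum_{i+j=k-1}q_{ij}$ is the $\phi$-expansion. Every term has value at least $\rho(f)+\rho(g)$. Let $i_0,j_0$ be the smallest indices attaining $\rho(f),\rho(g)$ and $k_0=i_0+j_0$; in $c_{k_0}$ only $r_{i_0j_0}$ attains that bound. This gives $\rho(fg)=\rho(f)+\rho(g)$, i.e.\ (i).

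For (iii) you do not need $\eta=\eta_\phi$. Since $(-\infty,\mu]$ is totally ordered, it suffices to exclude $\rho<\eta\in S$. In that case $\eta=\rho$ in degrees $<n$, and $\eta(\phi)=\rho(\phi)$ because it is squeezed between $\rho(\phi)$ and $\nu(\phi)$. Take a monic $\psi$ of minimal degree with $\rho(\psi)<\eta(\psi)$, with $\phi$-expansion $\sum_j\psi_j\phi^j$. For $j\ge1$ we have $\mu(\psi_j\phi^j)>\mu(\psi_j)+j\rho(\phi)$, and $\mu(\psi)\ge\eta(\psi)>\rho(\psi)$; together these give $\eta(\psi_0)=\mu(\psi_0)>\rho(\psi)$. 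Hence $\rho(\psi-\psi_0)=\rho(\psi)<\eta(\psi-\psi_0)$. Cancelling $\rho(\phi)=\eta(\phi)$ shows that $(\psi-\psi_0)/\phi$ is a witness of smaller degree, a contradiction.

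In the language of tangent directions: $\phi\in\mathbf{t}(\rho,\mu)\setminus\mathbf{t}(\rho,\nu)$, so Proposition \ref{prop:tangent_relation} forbids any $\eta\in(\rho,\mu]\cap(\rho,\nu]$. With this route no limit case arises, your non-emptiness step becomes superfluous, and $\rho$ takes values in $\Lambda_\infty$ by construction.
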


With the existence of the infimum, we can define the \textbf{interval} between two valuations in $\mathcal{T}$ as:
\[
[\mu, \nu] := \{\eta \in \mathcal{T} \mid \mu\wedge\nu \leq \eta \leq \mu \ \text{or} \ \mu\wedge\nu \leq \eta \leq \nu\}.
\]
That is, $[\mu, \nu]$ is the union of the segments $[\mu\wedge\nu, \mu]$ and $[\mu\wedge\nu, \nu]$
. We define $(\mu, \nu)$ similarly.

\begin{figure}[htbp]
    \centering
    
    \begin{subfigure}[b]{0.48\textwidth}
        \centering
        \definecolor{qqqqff}{rgb}{0,0,1}
        \definecolor{ududff}{rgb}{0.30196078431372547,0.25}
        \begin{tikzpicture}[line cap=round,line join=round,>=triangle 45,x=0.55cm,y=0.55cm]
        \clip(-0.8568828236857945,-0.15) rectangle (8.871502224064375,9.2);
        \draw [line width=0.5pt,color=qqqqff] (4,3)-- (4,6); 
        \draw [line width=0.5pt] (4,6)-- (4,8);
        \draw [line width=0.5pt] (4,3)-- (4,1);
        \draw [line width=0.5pt] (2.68,5.6)-- (4,3.82);
        \draw [line width=0.5pt] (4,6.6)-- (5.18,7.86);
        \begin{scriptsize}
        \draw [fill=qqqqff] (4,3) circle (2pt);
        \draw [fill=qqqqff] (4,6) circle (2pt);
        \draw[color=black] (4.444918716178052,6.141848546137132) node {$\nu$};
        \draw[color=black] (5.3,3.1290787822145134) node {$\mu=\mu\wedge\nu$};
        \end{scriptsize}
        \end{tikzpicture}
        \caption{Comparable valuations}
        \label{fig:comp}
    \end{subfigure}
    \hfill
    \begin{subfigure}[b]{0.48\textwidth}
        \centering
        \definecolor{qqqqff}{rgb}{0,0,1}
        \definecolor{ududff}{rgb}{0.30196078431372547,0.25}
        \begin{tikzpicture}[line cap=round,line join=round,>=triangle 45,x=0.55cm,y=0.55cm]
        \clip(-0.9578695196832009,-1.0) rectangle (8.770515528066966,8.0);
        \draw [line width=0.5pt] (1.5836289962515313,7.008651020114868)-- (2.37469,5.98195);
        \draw [line width=0.5pt,color=qqqqff] (2.37469,5.98195)-- (4,4); 
        \draw [line width=0.5pt] (6.380497056128343,7.008651020114868)-- (5.58943,5.98195);
        \draw [line width=0.5pt,color=qqqqff] (5.58943,5.98195)-- (4,4); 
        \draw [line width=0.5pt] (4,4)-- (4,1);
        \draw [line width=0.5pt] (4,1)-- (4,0);
        \begin{scriptsize}
        \draw [fill=qqqqff] (4,4) circle (2pt);
        \draw [fill=qqqqff] (5.58943,5.98195) circle (2pt);
        \draw [fill=qqqqff] (2.37469,5.98195) circle (2pt);
        \draw[color=black] (1.8,6.00719961814059) node {$\mu$};
        \draw[color=black] (6.1,6.00719961814059) node {$\nu$};
        \draw[color=black] (4.9,4.0) node {$\mu\wedge\nu$};
        \end{scriptsize}
        \end{tikzpicture}
        \caption{Non-comparable valuations}
        \label{fig:noncomp}
    \end{subfigure}
    
    \caption{Interval representation in the valuative tree}
    \label{fig:valuations}
\end{figure}
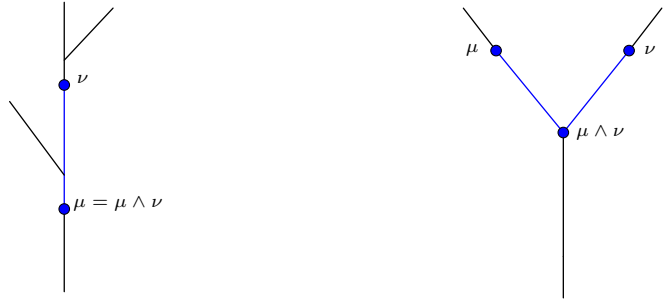

The next lemma gives a property of intervals that we will need in order to prove the main result of this section.

\begin{Lema}\label{lema1}
Let $\mu, \nu, \eta \in \mathcal{T}(v, \Lambda)$. Then we have
$[\mu, \eta] \subseteq [\mu, \nu] \cup [\nu, \eta]$.
\end{Lema}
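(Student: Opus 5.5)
Take $\eta' \in [\mu,\eta]$. By the definition of the interval, $\eta'$ lies in $[\mu\wedge\eta,\mu]$ or in $[\mu\wedge\eta,\eta]$. The two cases are symmetric under exchanging $\mu$ and $\eta$, since the right-hand side $[\mu,\nu]\cup[\nu,\eta]$ is invariant under that swap (intervals are symmetric because $\wedge$ is). So we assume $\mu\wedge\eta\le \eta'\le \mu$ and show that $\eta'\in[\mu,\nu]\cup[\nu,\eta]$.

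The only tree property we use is that each $(-\infty,\lambda]$ is totally ordered (\cite[Theorem 2.4]{Nart_2}), together with the existence of infima (Proposition \ref{LemExisteJoint}). The elements $\mu\wedge\nu$, $\mu\wedge\eta$ and $\eta'$ all lie in $(-\infty,\mu]$, so they are pairwise comparable. We split according to the position of $\eta'$ relative to $\mu\wedge\nu$.

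\textbf{Case 1: $\mu\wedge\nu\le\eta'$.} Then $\mu\wedge\nu\le\eta'\le\mu$, so $\eta'\in[\mu,\nu]$ directly.

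\textbf{Case 2: $\eta'<\mu\wedge\nu$.} Then $\eta'\le \nu$. We also have $\eta'\le\mu\wedge\eta\le \eta$... more precisely $\mu\wedge\eta\le\eta'\le\eta$ would need $\eta'\le\eta$, and we get this as follows. From $\eta'<\mu\wedge\nu\le\mu$ and $\mu\wedge\eta\le\eta'$, the element $\eta'$ lies below $\mu$ and above $\mu\wedge\eta$. Next we locate $\nu\wedge\eta$. Since $\mu\wedge\eta\le\eta'\le\nu$ and $\mu\wedge\eta\le\eta$, we get $\mu\wedge\eta\le\nu\wedge\eta$. Both $\nu\wedge\eta$ and $\eta'$ lie in $(-\infty,\nu]$, so they are comparable.

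\emph{Subcase 2a: $\nu\wedge\eta\le\eta'$.} Then $\nu\wedge\eta\le\eta'\le\nu$, so $\eta'\in[\nu,\eta]$.

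\emph{Subcase 2b: $\eta'<\nu\wedge\eta$.} Then $\eta'$ and $\mu\wedge\nu$ are both strictly above $\eta'$... more precisely, $\mu\wedge\nu$ and $\nu\wedge\eta$ both lie in $(-\infty,\nu]$, so they are comparable, and each is strictly greater than $\eta'$. Let $\rho=\min\{\mu\wedge\nu,\nu\wedge\eta\}$. Then $\rho\le\mu$ and $\rho\le\eta$, so $\rho\le\mu\wedge\eta\le\eta'<\rho$, a contradiction. Hence this subcase cannot occur.

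In every case $\eta'\in[\mu,\nu]\cup[\nu,\eta]$, which proves the lemma.

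The main obstacle is the bookkeeping in Case 2. The key point is the contradiction step in Subcase 2b, where the minimum of two comparable elements of $(-\infty,\nu]$ gives a common lower bound of $\mu$ and $\eta$ that exceeds $\mu\wedge\eta$. Nothing valuation-theoretic is needed beyond the tree axioms.
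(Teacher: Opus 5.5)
Your proof is correct and follows essentially the paper's route: the same symmetry reduction to the segment $[\mu\wedge\eta,\mu]$, the same two tools (total order of each $(-\infty,\lambda]$ and maximality of $\wedge$ among common lower bounds). Your Subcase 2b contradiction is just the pointwise form of the paper's observation that $\min\{\mu\wedge\nu,\nu\wedge\eta\}\le\mu\wedge\eta$.

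Do clean up Case 2, though. The inequalities $\eta'\le\mu\wedge\eta$ and $\eta'\le\eta$ that you announce there are false in general and are never used, since Subcase 2a puts $\eta'$ on the $\nu$-side of $[\nu,\eta]$. The claim $\mu\wedge\eta\le\nu\wedge\eta$ is likewise superfluous.
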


\begin{proof}
It suffices to show that both intervals $[\mu \wedge \eta, \mu]$ and $[\mu \wedge \eta, \eta]$ are contained in $[\mu, \nu] \cup [\nu, \eta]$.
Consider the interval $(-\infty, \nu]$, which is totally ordered. Since $\nu \wedge \eta \leq \nu$ and $\mu \wedge \nu \leq \nu$, these two valuations are comparable.

If $\mu \wedge \nu \leq \nu \wedge \eta$, then $\mu \wedge \nu \leq \eta$. Hence, $\mu \wedge \nu \leq \mu \wedge \eta$, and so
\[
[\mu \wedge \eta, \mu] \subseteq [\mu \wedge \nu, \mu] \subseteq [\mu, \nu].
\]

If $\nu \wedge \eta \leq \mu \wedge \nu$, then $\nu \wedge \eta \leq \mu$ and consequently $\nu \wedge \eta \leq \mu \wedge \eta$. On the other hand, since both $\mu \wedge \nu$ and $\mu \wedge \eta$ are smaller or equal to $\mu$, and the interval $(-\infty, \mu]$ is totally ordered, we must have
\[
\mu \wedge \nu \leq \mu \wedge \eta \quad \text{or} \quad \mu \wedge \eta \leq \mu \wedge \nu.
\]

In the first case, we proceed as before and conclude that $[\mu \wedge \eta, \mu] \subseteq [\mu, \nu]$. 
In the second case, we have $\mu \wedge \eta \leq \nu$, and thus $\mu \wedge \eta \leq \nu \wedge \eta$. Therefore, $\mu \wedge \eta = \nu \wedge \eta$, and we obtain
\[
[\mu \wedge \eta, \mu \wedge \nu] = [\nu \wedge \eta, \mu \wedge \nu] \subseteq [\nu \wedge \eta, \nu] \subseteq [\nu, \eta].
\]
Hence,
\[
[\mu \wedge \eta, \mu] = [\mu \wedge \eta, \mu \wedge \nu] \cup [\mu \wedge \nu, \mu] \subseteq [\nu, \eta] \cup [\mu, \nu].
\]
The proof that $[\mu \wedge \eta, \eta] \subseteq [\mu, \nu] \cup [\nu, \eta]$ is analogous.
\end{proof}

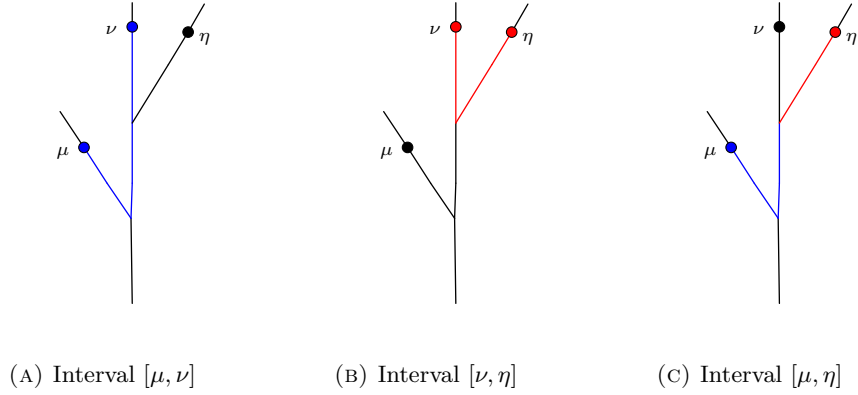
\begin{figure}[h]
    \centering
    \begin{subfigure}{0.32\textwidth}
        \centering
        \definecolor{qqqqff}{rgb}{0,0,1}
        \definecolor{ududff}{rgb}{0.30196078431372547,0.30196078431372547,1}
        \begin{tikzpicture}[line cap=round,line join=round,>=triangle 45,x=0.8cm,y=0.8cm]
        \clip(1.9739234624800106,2.348750338254921) rectangle (8.102295851164635,8.583149930896012);
        \draw [line width=0.5pt] (5,3)-- (4.979665721613332,4.415452098029648);
        \draw [line width=0.5pt,color=qqqqff] (4.979665721613332,4.415452098029648)-- (4.582341378633048,4.998194467734064);
        \draw [line width=0.5pt,color=qqqqff] (4.582341378633048,4.998194467734064)-- (4.198261180418773,5.59418098220449);
        \draw [line width=0.5pt] (4.198261180418773,5.59418098220449)-- (3.800936837438489,6.190167496674916);
        \draw [line width=0.5pt,color=qqqqff] (4.979665721613332,4.415452098029648)-- (5,5);
        \draw [line width=0.5pt,color=qqqqff] (5,5)-- (5,6);
        \draw [line width=0.5pt,color=qqqqff] (5,6)-- (5,7.6);
        \draw [line width=0.5pt] (5,7.6)-- (5,8);
        \draw [line width=0.5pt] (5,6)-- (5.600053837791674,6.9715364307574985);
        \draw [line width=0.5pt] (5.600053837791674,6.9715364307574985)-- (5.928738170056767,7.512275170935552);
        \draw [line width=0.5pt] (5.928738170056767,7.512275170935552)-- (6.198127040086204,7.978127040086194);
        \begin{scriptsize}
        \draw [fill=qqqqff] (4.198261180418773,5.59418098220449) circle (2pt);
        \draw [fill=qqqqff] (5,7.6) circle (2pt);
        \draw[color=black] (3.85,5.5) node {$\mu$};
        \draw[color=black] (4.656411722578991,7.55) node {$\nu$};
        \draw [fill=black] (5.928738170056767,7.512275170935552) circle (2pt);
        \draw[color=black] (6.2,7.4) node {$\eta$};
        \end{scriptsize}
        \end{tikzpicture}
        \caption{Interval $[\mu, \nu]$}
    \end{subfigure}
    \hfill
    \begin{subfigure}{0.32\textwidth}
        \centering
        \definecolor{qqqqff}{rgb}{0,0,1}
        \definecolor{ududff}{rgb}{1,0,0}
        \begin{tikzpicture}[line cap=round,line join=round,>=triangle 45,x=0.8cm,y=0.8cm]
        \clip(1.9739234624800106,2.348750338254921) rectangle (8.102295851164635,8.583149930896012);
        \draw [line width=0.5pt] (5,3)-- (4.979665721613332,4.415452098029648);
        \draw [line width=0.5pt] (4.979665721613332,4.415452098029648)-- (4.582341378633048,4.998194467734064);
        \draw [line width=0.5pt] (4.582341378633048,4.998194467734064)-- (4.198261180418773,5.59418098220449);
        \draw [line width=0.5pt] (4.198261180418773,5.59418098220449)-- (3.800936837438489,6.190167496674916);
        \draw [line width=0.5pt] (4.979665721613332,4.415452098029648)-- (5,5);
        \draw [line width=0.5pt] (5,5)-- (5,6);
        \draw [line width=0.5pt,color=ududff] (5,6)-- (5,7.6);
        \draw [line width=0.5pt] (5,7.6)-- (5,8);
        \draw [line width=0.5pt,color=ududff] (5,6)-- (5.600053837791674,6.9715364307574985);
        \draw [line width=0.5pt,color=ududff] (5.600053837791674,6.9715364307574985)-- (5.928738170056767,7.512275170935552);
        \draw [line width=0.5pt] (5.928738170056767,7.512275170935552)-- (6.198127040086204,7.978127040086194);
        \begin{scriptsize}
        \draw [fill=black] (4.198261180418773,5.59418098220449) circle (2pt);
        \draw [fill=ududff] (5,7.6) circle (2pt);
        \draw[color=black] (3.85,5.5) node {$\mu$};
        \draw[color=black] (4.656411722578991,7.55) node {$\nu$};
        \draw [fill=ududff] (5.928738170056767,7.512275170935552) circle (2pt);
        \draw[color=black] (6.2,7.4) node {$\eta$};
        \end{scriptsize}
        \end{tikzpicture}
        \caption{Interval $[\nu, \eta]$}
    \end{subfigure}
    \hfill
    \begin{subfigure}{0.32\textwidth}
        \centering
        \definecolor{qqqqff}{rgb}{0,0,1}
        \definecolor{ududff}{rgb}{1,0,0}
        \begin{tikzpicture}[line cap=round,line join=round,>=triangle 45,x=0.8cm,y=0.8cm]
        \clip(1.9739234624800106,2.348750338254921) rectangle (8.102295851164635,8.583149930896012);
        \draw [line width=0.5pt] (5,3)-- (4.979665721613332,4.415452098029648);
        \draw [line width=0.5pt,color=qqqqff] (4.979665721613332,4.415452098029648)-- (4.582341378633048,4.998194467734064);
        \draw [line width=0.5pt,color=qqqqff] (4.582341378633048,4.998194467734064)-- (4.198261180418773,5.59418098220449);
        \draw [line width=0.5pt] (4.198261180418773,5.59418098220449)-- (3.800936837438489,6.190167496674916);
        \draw [line width=0.5pt,color=qqqqff] (4.979665721613332,4.415452098029648)-- (5,5);
        \draw [line width=0.5pt,color=qqqqff] (5,5)-- (5,6);
        \draw [line width=0.5pt] (5,6)-- (5,7.6);
        \draw [line width=0.5pt] (5,7.6)-- (5,8);
        \draw [line width=0.5pt,color=ududff] (5,6)-- (5.600053837791674,6.9715364307574985);
        \draw [line width=0.5pt,color=ududff] (5.600053837791674,6.9715364307574985)-- (5.928738170056767,7.512275170935552);
        \draw [line width=0.5pt] (5.928738170056767,7.512275170935552)-- (6.198127040086204,7.978127040086194);
        \begin{scriptsize}
        \draw [fill=qqqqff] (4.198261180418773,5.59418098220449) circle (2pt);
        \draw [fill=black] (5,7.6) circle (2pt);
        \draw[color=black] (3.85,5.5) node {$\mu$};
        \draw[color=black] (4.656411722578991,7.55) node {$\nu$};
        \draw [fill=ududff] (5.928738170056767,7.512275170935552) circle (2pt);
        \draw[color=black] (6.2,7.4) node {$\eta$};
        \end{scriptsize}
        \end{tikzpicture}
        \caption{Interval $[\mu, \eta]$}
    \end{subfigure}
    \caption{Representation of Lemma~\ref{lema1} in the valuative tree.}
\end{figure}


Given $\mu \in \mathcal{T}$, consider the following equivalence relation on
$\mathcal{T} \setminus \{\mu\}$:
\[
\nu \sim_{\mu} \eta \Longleftrightarrow \mu \notin [\nu, \eta].
\]

For each $\nu \in \mathcal{T} \setminus \{\mu\}$, define the equivalence class
\[
[\nu]_{\mu} := \{\eta \in \mathcal{T} \mid \nu \sim_{\mu} \eta\}.
\]
Denote by
\[
\mathcal{T}_{\mu} := \{ [\nu]_{\mu} \mid \nu \in \mathcal{T} \setminus \{\mu\} \}
\]
the set of all such equivalence classes. This equivalence relation, inspired by the
work of Favre and Jonsson on the valuative tree of centered valuations~\cite{Fav_1},
is here extended to all valuations in $\mathcal{T}$.



\begin{Def}
The \textbf{weak tree topology} $\mathfrak{W}$ on $\mathcal{T}$ is the topology generated by all sets of the form $[\nu]_{\mu}$, where $\mu$ runs through $\mathcal T$.
\end{Def}

Fixing $\mu \in \mathcal{T}$, we define a relation $\leq_{\mu}$ on $\mathcal{T}$ as follows: given two valuations $\nu, \eta \in \mathcal{T}$, we say that $$\nu \leq_{\mu} \eta \Longleftrightarrow \nu \in [\mu, \eta].$$ Note that $\mu$ is the smallest element of $(\mathcal{T}, \leq_{\mu})$, since $\mu \in [\mu, \nu]$ for all $\nu \in \mathcal{T}$; thus, we say that $\mu$ is the \textbf{root} of $(\mathcal{T}, \leq_{\mu})$. Moreover, the segments under this new order are exactly the same as those defined by the order $\leq$. Consequently, the weak tree topology on $\mathcal T$ defined by these two orders is the same.

\subsubsection{Key polynomials and tangent directions}

Given $\mu \in \mathcal{T}(v, \Lambda)$, the \textbf{graded algebra} of $\mu$ is the integral domain $\mathcal{G}_{\mu}=\bigoplus_{\alpha \in \Gamma_{\mu}}\mathcal{P}_{\alpha}/\mathcal{P}^+_{\alpha}$, where
\[\mathcal{P}_{\alpha}=\{f \in K[x]\mid\mu(f)\geq\alpha\}\text{ and }\mathcal{P}^+_{\alpha}=\{f \in K[x]\mid\mu(f)>\alpha\}.\]

Consider the \textbf{initial term} map ${\rm in }_{\mu}: K[x]\longrightarrow\mathcal{G}_{\mu}$, given by ${\rm in}_{\mu}(f)=0$ for every $f \in \SU(\mu)$ and
\[{\rm in}_{\mu}f=f+\mathcal{P}^+_{\mu(f)}\in \mathcal{P}_{\mu(f)}/\mathcal{P}^+_{\mu(f)}\text{, if }f \in K[x]\setminus \SU(\mu).\]

\begin{Def}
A monic $\phi\in K[x]$ is a \textbf{(Mac Lane-Vaquié) key polynomial} for $\mu$ if $({\rm in}_{\mu}\phi)\mathcal{G}_{\mu}$ is a homogeneous prime ideal containing no initial term ${\rm in}_{\mu}f$ with $\deg f<\deg \phi$.
\end{Def}

We denote by KP($\mu$) the set of all key polynomials of $\mu$. These polynomials are always irreducible in $K[x]$. We present a criterion for the existence of key polynomials that was proved in \cite[Theorem 2.3]{Nart_2} and \cite[Theorem 4.4]{NartKeyPolyValuedFields}.

\begin{Teo}\label{MLteo2.3}
Let $\mu \in \mathcal{T}(v, \Lambda)$. Then, the following are equivalent:
\begin{enumerate}
    \item $\operatorname{KP}(\mu) = \emptyset$;
    \item $\mu$ is either valuation-algebraic or has nontrivial support;
    \item $\mu$ is maximal in $\mathcal{T}(v, \Lambda)$.
\end{enumerate}
\end{Teo}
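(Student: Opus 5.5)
We establish the cycle (1)$\Rightarrow$(2)$\Rightarrow$(3)$\Rightarrow$(1), relying on the standard theory of key polynomials and augmentations (as in \cite{Nart_2}, \cite{NartKeyPolyValuedFields}) and on the classification of valuations in Section \ref{SecPreliminaries}.

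For (3)$\Rightarrow$(1), argue by contraposition. Suppose $\phi\in\operatorname{KP}(\mu)$. Take any $\gamma\in\Lambda$ with $\gamma>\mu(\phi)$; such a $\gamma$ exists, e.g.\ $\mu(\phi)+\epsilon$ for a positive $\epsilon\in\Gamma_v$ (or $\gamma=\infty$ could also be used, but we stay inside $\Lambda$). Form the ordinary augmentation
$$\mu'(f)=\min_{j}\{\mu(f_j)+j\gamma\},$$
where $f=\sum f_j\phi^j$ is the $\phi$-expansion. The key-polynomial property (primality of $({\rm in}_\mu\phi)\mathcal{G}_\mu$ and the degree condition) is exactly what is needed to show that $\mu'$ is a valuation, with $\mu'(f_j)=\mu(f_j)$ for $\deg f_j<\deg\phi$. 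Moreover $\mu\le\mu'$ and $\mu'(\phi)=\gamma>\mu(\phi)$, so $\mu<\mu'$ and $\mu$ is not maximal. Checking that $\mu'$ is a valuation is a routine multiplicativity computation, which I take from the cited references.

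For (2)$\Rightarrow$(3), let $\mu\le\nu$. If $\SU(\mu)=(p)\neq 0$, write $f=gp^k+\dots$ and use the division by $p$ together with $\nu\ge\mu$ on remainders of degree $<\deg p$. More directly, I expect the standard fact that $\mu\le\nu$ with $\mu<\nu$ forces $\mu(f)<\nu(f)$ precisely on polynomials whose initial term is divisible by some key polynomial, and $\nu$ agrees with $\mu$ on all polynomials of smaller degree. Concretely: if $\mu<\nu$, let $\phi$ be monic of minimal degree with $\mu(\phi)<\nu(\phi)$; one shows $\phi\in\operatorname{KP}(\mu)$. This gives (2)$\Rightarrow$(3) in contrapositive form as ``not maximal $\Rightarrow$ $\operatorname{KP}(\mu)\ne\emptyset$'', which is (3)$\Rightarrow$(1)'s converse; I would then prove (1)$\Rightarrow$(2) separately.

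For (1)$\Rightarrow$(2), again argue by contraposition. Suppose $\mu$ is Krull and valuation-transcendental. If $\mu$ is residue-transcendental, choose $\phi$ monic of minimal degree such that some ${\rm in}_\mu$-quotient gives a transcendental residue. Equivalently, use the known structure $\mu=\mu_\phi$ together with $n\mu(\phi)\in\Gamma_v$, in the spirit of Lemma \ref{lemqnSobrecTransc}, and verify that this $\phi$ is a key polynomial. If $\mu$ is value-transcendental, take $\phi$ monic of minimal degree with $\mu(\phi)\notin\Gamma_{\mathbb Q}$; minimality makes $\mu=\mu_\phi$ and ${\rm in}_\mu\phi$ prime. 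The main obstacle is this step: producing the key polynomial from transcendence data, i.e.\ showing that the minimal-degree $\phi$ generates a prime ideal in $\mathcal{G}_\mu$. I would first try to establish $\mu=\mu_\phi$ and then read off primality from the graded algebra, which is a polynomial ring in ${\rm in}_\mu\phi$ over the degree-$<\deg\phi$ part.
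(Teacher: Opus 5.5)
Your cycle does not close. Read each step for what it actually proves. The augmentation argument shows that $\operatorname{KP}(\mu)\neq\emptyset$ forces $\mu$ to be non-maximal, i.e.\ (3)$\Rightarrow$(1). The tangent-direction paragraph gives ``$\mu$ not maximal $\Rightarrow\operatorname{KP}(\mu)\neq\emptyset$''. That is the contrapositive of (1)$\Rightarrow$(3), not of (2)$\Rightarrow$(3); the latter would read ``not maximal $\Rightarrow$ $\mu$ is Krull and valuation-transcendental''. The last paragraph aims at (1)$\Rightarrow$(2). So, even granting every cited fact, you only obtain (1)$\Leftrightarrow$(3) and (1)$\Rightarrow$(2). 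Nothing shows that a valuation with nontrivial support, or a valuation-algebraic one, has no key polynomial (equivalently, is maximal). The sentence about dividing by $p$ is abandoned, and the valuation-algebraic case is never addressed. The paper gives no proof of its own here: it quotes \cite[Theorem 2.3]{Nart_2} and \cite[Theorem 4.4]{NartKeyPolyValuedFields}, and the missing implication is part of what those results supply.

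The missing idea is that under (2) the graded algebra $\mathcal{G}_\mu$ is a graded field. Every nonzero homogeneous element is a unit, so no ${\rm in}_\mu\phi$ generates a nonzero proper prime ideal.

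\emph{Nontrivial support.} If $\SU(\mu)=(p)\neq 0$ and $f\notin(p)$, choose $g$ with $fg=1+hp$; then ${\rm in}_\mu f\cdot{\rm in}_\mu g={\rm in}_\mu(1+hp)=1$. Here you need the standard convention that $({\rm in}_\mu\phi)\mathcal{G}_\mu$ be a \emph{nonzero} prime ideal. Otherwise $p$ itself would satisfy the definition as literally worded.

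\emph{Valuation-algebraic.} Take $f\neq 0$ and pick $n\geq 1$ and $c\in K^\times$ with $n\mu(f)=v(c)$. The residue of $f^n/c$ is a root of a monic polynomial of some degree $m$ over $k_v$ with nonzero constant term. Multiply a lift of this relation by $c^m$ and take initial terms in degree $m\,v(c)$. This yields ${\rm in}_\mu f\cdot H={\rm in}_\mu(bc^m)$ with $H\in\mathcal{G}_\mu$ and $b\in\mathcal{O}_v^\times$, and the right-hand side is a unit.

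This proves (2)$\Rightarrow$(1), and (2)$\Rightarrow$(3) then follows from your tangent-direction step. Be aware, too, that two of your steps are genuine theorems of Nart, not routine checks: that every $\phi\in\mathbf{t}(\mu,\nu)$ is a key polynomial for $\mu$, and the residue-transcendental case of (1)$\Rightarrow$(2). In particular, invoking ``the known structure $\mu=\mu_\phi$ with $n\mu(\phi)\in\Gamma_v$'' runs Lemma \ref{lemqnSobrecTransc} backwards. It presupposes the very $\phi$ you need to construct.
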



Given a non-maximal valuation $\mu \in \mathcal{T}(v, \Lambda)$, for any $\phi \in \operatorname{KP}(\mu)$ and $\gamma \in \Lambda_{\infty}$ such that $\mu(\phi) < \gamma$, we consider the \textbf{augmented valuation} $\nu = [\mu; \phi, \gamma]$ given by
\[
\nu(f) = \min_{0 \leq j \leq r}\{ \mu(f_j) + j\gamma \},
\]
where $f = \sum_{j=0}^r f_j \phi^{j}$ with $\deg(f_j) < \deg(\phi)$. In \cite[Theorem 1.1]{Mvaquie}, it is proved that this map is indeed a valuation. Moreover, one can easily verify that
\begin{equation}
\nu \text{ is } \left\{
\begin{aligned}
&\text{a non-trivial support valuation,} && \text{if } \gamma = \infty; \\
&\text{residue-transcendental,} && \text{if } \gamma \in \Gamma_{\mathbb{Q}}; \\
&\text{value-transcendental,} && \text{if } \gamma \in \Lambda \setminus \Gamma_{\mathbb{Q}}.
\end{aligned}
\right.
\label{Vtipos}
\end{equation}

\begin{Def}
Take two valuations $\mu, \nu \in \mathcal{T}$ that are not maximal, with $\mu < \nu$. 
The set $\mathbf{t}(\mu, \nu)$ of monic polynomials $\phi \in K[x]$ of minimal degree satisfying $\mu(\phi) < \nu(\phi)$ is called the \textbf{tangent direction} of $\mu$ determined by $\nu$.
\end{Def}

Now, fix a non-maximal valuation $\mu \in \mathcal{T}$. Consider the following equivalence relation on $\mathcal{T}$:
\[
\nu \sim_{\mathrm{tan}} \eta \;\Longleftrightarrow\; \nu > \mu\text{, }\eta > \mu\text{ and }(\mu, \nu] \cap (\mu, \eta] \neq \emptyset,
\]
where $\nu, \eta \in \mathcal{T}$. Define the set
\[
\mathrm{TD}(\mu) = \{\nu \in \mathcal{T} \mid \mu < \nu\}/\sim_{\mathrm{tan}}
\]
and denote the equivalence class of $\nu$ in $\mathrm{TD}(\mu)$ by $[\nu]_{\mathrm{tan}}$. The following result can be found in \cite[Proposition 2.4]{Nart_1}.

\begin{Prop}\label{prop:tangent_relation}
Let $\mu, \nu, \eta \in \mathcal{T}$ be such that $\mu < \nu$ and $\mu < \eta$. Then,
\[
\mathbf{t}(\mu, \nu) = \mathbf{t}(\mu, \eta) \quad \text{if and only if} \quad \nu \sim_{\mathrm{tan}} \eta.
\]
\end{Prop}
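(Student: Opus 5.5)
The plan is to prove both implications directly from the definitions, using one elementary degree lemma together with the augmentation construction. Throughout, for $\mu<\nu$ let $m=m(\mu,\nu)$ be the common degree of the elements of $\mathbf{t}(\mu,\nu)$.

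\emph{Degree lemma.} If $\mu<\nu$ and $\deg f<m$, then $\mu(f)=\nu(f)$. Indeed, for a monic $f$ this holds by minimality of $m$ together with $\mu\le\nu$, and it extends to arbitrary $f$ by multiplying by a constant.

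From this I would derive a description of a tangent direction in terms of any one of its elements. Fix $\phi\in\mathbf{t}(\mu,\nu)$. The claim is that a monic $g$ of degree $m$ lies in $\mathbf{t}(\mu,\nu)$ if and only if $\mu(g-\phi)>\mu(\phi)$.
\begin{itemize}
\item[($\Leftarrow$)] Suppose $\mu(g-\phi)>\mu(\phi)$. Then $\mu(g)=\mu(\phi)$, and
\[
\nu(g)\ge\min\{\nu(\phi),\nu(g-\phi)\}>\mu(\phi)=\mu(g),
\]
where $\nu(\phi)>\mu(\phi)$ by choice of $\phi$ and $\nu(g-\phi)\ge\mu(g-\phi)>\mu(\phi)$.
\item[($\Rightarrow$)] Suppose $g\in\mathbf{t}(\mu,\nu)$. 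Since $\deg(g-\phi)<m$, the degree lemma gives $\mu(g-\phi)=\nu(g-\phi)$. If we had $\mu(g-\phi)\le\mu(\phi)$, comparing $\mu$ and $\nu$ on $g=\phi+(g-\phi)$ yields a contradiction with $\nu(g)>\mu(g)$ and $\nu(\phi)>\mu(\phi)$. This is a short case check: split according to whether $\mu(g-\phi)<\mu(\phi)$ or $\mu(g-\phi)=\mu(\phi)$.
\end{itemize}

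\emph{($\Leftarrow$) of the proposition.} Suppose $\rho\in(\mu,\nu]\cap(\mu,\eta]$. It suffices to show that $\mu<\rho\le\nu$ implies $\mathbf{t}(\mu,\rho)=\mathbf{t}(\mu,\nu)$, and then apply the same argument to $\eta$.
\begin{enumerate}
\item Any monic $g$ with $\mu(g)<\rho(g)$ also satisfies $\mu(g)<\nu(g)$. Hence $m(\mu,\nu)\le m(\mu,\rho)$.
\item If $\deg f<m(\mu,\nu)$, then $\mu(f)=\nu(f)$ by the degree lemma, so the sandwich $\mu\le\rho\le\nu$ forces $\rho(f)=\mu(f)$. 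Hence the two degrees are equal.
\item By step 1 we get $\mathbf{t}(\mu,\rho)\subseteq\mathbf{t}(\mu,\nu)$. For the reverse inclusion, fix $\phi\in\mathbf{t}(\mu,\rho)$ and take $g\in\mathbf{t}(\mu,\nu)$. The description above gives $\mu(g-\phi)>\mu(\phi)$. Then
\[
\rho(g)\ge\min\{\rho(\phi),\rho(g-\phi)\}>\mu(\phi)=\mu(g),
\]
so $g\in\mathbf{t}(\mu,\rho)$.
\end{enumerate}

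\emph{($\Rightarrow$) of the proposition.} Suppose $\mathbf{t}(\mu,\nu)=\mathbf{t}(\mu,\eta)$ and pick $\phi$ in it. Set $\gamma=\min\{\nu(\phi),\eta(\phi)\}>\mu(\phi)$ and $\rho=[\mu;\phi,\gamma]$. We show $\rho\in(\mu,\nu]\cap(\mu,\eta]$.
\begin{itemize}
\item \emph{$\rho\le\nu$.} Write $f=\sum f_j\phi^j$. Each $f_j$ has $\deg f_j<m$, so $\nu(f_j)=\mu(f_j)$ by the degree lemma. Therefore
\[
\nu(f)\ge\min_j\{\mu(f_j)+j\nu(\phi)\}\ge\rho(f).
\]
The same argument gives $\rho\le\eta$.
\item \emph{$\mu<\rho$.} Strictness is clear from $\rho(\phi)=\gamma>\mu(\phi)$. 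The inequality $\mu\le\rho$ requires $\mu(f)=\min_j\mu(f_j\phi^j)$, that is, $\mu=\mu_\phi$.
\end{itemize}

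The main obstacle is exactly these last inputs: that an element $\phi$ of a tangent direction is a key polynomial for $\mu$, so the augmentation $[\mu;\phi,\gamma]$ is defined, and that $\mu$ coincides with its truncation $\mu_\phi$. I would first try to obtain $\mu=\mu_\phi$ directly from the degree lemma, by comparing the $\phi$-expansions under $\mu$ and under $\nu$. Failing that, I would cite the standard fact from \cite{Nart_1} that a monic polynomial of minimal degree with $\mu(\phi)<\nu(\phi)$ is a key polynomial for $\mu$, together with the fact that key polynomials $\phi$ satisfy $\mu_\phi=\mu$.
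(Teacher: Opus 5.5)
The paper does not prove this statement; it quotes it from \cite[Proposition 2.4]{Nart_1}. Your ($\Rightarrow$) direction is the standard argument: augment $\mu$ by a common $\phi$ at $\min\{\nu(\phi),\eta(\phi)\}$. It is fine once you cite that $\phi$ is a key polynomial for $\mu$ with $\mu=\mu_\phi$.

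Your ($\Leftarrow$) direction has a genuine gap at step 2. Steps 1 and 2 both prove the \emph{same} inequality $m(\mu,\rho)\ge m(\mu,\nu)$. Step 1 gets it because $\{g:\mu(g)<\rho(g)\}\subseteq\{g:\mu(g)<\nu(g)\}$. Step 2 gets it because $\rho=\mu$ below degree $m(\mu,\nu)$. Nothing you wrote gives the reverse inequality, i.e.\ that $\rho$ already separates from $\mu$ in degree $m(\mu,\nu)$. A priori one could have $\rho(\phi)=\mu(\phi)$ for every $\phi\in\mathbf{t}(\mu,\nu)$, with $\rho$ first leaving $\mu$ at a higher degree. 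So ``the two degrees are equal'' is unjustified. With it fall both inclusions in step 3: $\mathbf{t}(\mu,\rho)\subseteq\mathbf{t}(\mu,\nu)$ fails if elements of $\mathbf{t}(\mu,\rho)$ have larger degree, and the reverse inclusion uses $\phi\in\mathbf{t}(\mu,\nu)$ to apply your description lemma. This is exactly where the degree lemma alone is not enough and you need the structure of $\phi$ relative to $\mu$. In \cite{Nart_1} this comes from the fact that $\mu(f)<\nu(f)$ if and only if $\mathrm{in}_\mu\phi$ divides $\mathrm{in}_\mu f$ in $\mathcal{G}_\mu$.

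You can repair this elementarily.
\begin{enumerate}
\item Since $\mu$ is non-maximal, it is Krull, so all values below are finite.
\item Fix $\phi\in\mathbf{t}(\mu,\nu)$ and write $f=q\phi+r$ with $\deg r<m$. Then $\mu(f)=\min\{\mu(q\phi),\mu(r)\}$. Otherwise $\mu(r)=\mu(q\phi)<\mu(f)$. The degree lemma gives $\nu(r)=\mu(r)$, while $\nu(q\phi)>\mu(r)$ and $\nu(f)>\mu(r)$, so $\nu(r)=\nu(f-q\phi)>\nu(r)$, a contradiction. Iterating this identity gives $\mu=\mu_\phi$, which you also need for ($\Rightarrow$).
\item Let $\psi\in\mathbf{t}(\mu,\rho)$ have degree $m'>m$. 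Write $\psi=a_0+\phi h$ with $\deg a_0<m$ and $h$ monic of degree $m'-m$.
\item Since $\mu(\psi)<\nu(\psi)$, the identity in step 2 forces $\mu(a_0)>\mu(\phi h)=\mu(\psi)$. Indeed, if $\mu(a_0)\le\mu(\phi h)$, then $\nu(\psi)=\nu(a_0)=\mu(a_0)=\mu(\psi)$.
\item Hence $\rho(\phi h)\ge\min\{\rho(\psi),\rho(a_0)\}>\mu(\phi h)$. So either $\rho(\phi)>\mu(\phi)$ or $\rho(h)>\mu(h)$.
\item Both options contradict the minimality of $m'$, because $\phi$ and $h$ are monic of degree less than $m'$.
\end{enumerate}
Thus $m(\mu,\rho)=m(\mu,\nu)$, and the rest of your step 3 goes through.
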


The next proposition characterizes the elements of $\mathcal{T}_{\mu}$ in terms of tangent directions. Before we present it, let us introduce a notation inspired by the following lemma. 

\begin{Lema}\label{ConjB}
  For \(\mu, \nu, \eta \in \mathcal{T}(v, \Lambda)\), if \(\mu \not\leq \nu\), then \(\nu \sim_{\mu} \eta\) if and only if \(\mu \not\leq \eta\).
\end{Lema}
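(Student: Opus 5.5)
The plan is to unwind the definition of $\sim_\mu$ through intervals and reduce everything to comparisons with $\mu$. Since $\mu\not\le\nu$, we have $\nu\neq\mu$, so the relation $\nu\sim_\mu\eta$ makes sense. By definition, $\nu\sim_\mu\eta$ means $\mu\notin[\nu,\eta]$. Moreover, $\mu\in[\nu,\eta]$ holds exactly when $\nu\wedge\eta\le\mu\le\nu$ or $\nu\wedge\eta\le\mu\le\eta$. Under our hypothesis $\mu\not\le\nu$, the first alternative is impossible. Hence
\[
\mu\in[\nu,\eta]\iff \nu\wedge\eta\le\mu\le\eta .
\]
The lemma therefore reduces to the claim that, when $\mu\not\le\nu$, the condition $\mu\le\eta$ already forces $\nu\wedge\eta\le\mu$.

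To prove the claim, I assume $\mu\le\eta$. Then $\mu$ and $\nu\wedge\eta$ both lie in $(-\infty,\eta]$, which is totally ordered by \cite[Theorem 2.4]{Nart_2}, so they are comparable. Suppose $\mu<\nu\wedge\eta$. Then $\mu\le\nu\wedge\eta\le\nu$, contradicting $\mu\not\le\nu$. Hence $\nu\wedge\eta\le\mu$, which proves the claim. Here $\nu\wedge\eta$ exists by Proposition \ref{LemExisteJoint}.

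Combining the two steps, for $\mu\not\le\nu$ we get
\[
\mu\in[\nu,\eta]\iff \mu\le\eta .
\]
Negating both sides gives $\nu\sim_\mu\eta$ if and only if $\mu\not\le\eta$. The only case needing separate attention is $\eta=\mu$. There $\mu\in[\nu,\mu]$ trivially, so $\nu\not\sim_\mu\mu$, and also $\mu\le\mu$, so both sides of the equivalence are false; this agrees with the displayed equivalence.

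The only substantive step is the comparability argument inside $(-\infty,\eta]$; everything else is bookkeeping with the definitions. I do not expect a real obstacle, except making sure that the tree property is applied to the correct downward interval, namely $(-\infty,\eta]$ and not $(-\infty,\nu]$.
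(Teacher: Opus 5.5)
Your proof is correct and follows essentially the same route as the paper's. Both arguments show that $\mu\le\eta$ forces $\mu\in[\nu,\eta]$ by comparing $\mu$ with $\nu\wedge\eta$ inside the totally ordered set $(-\infty,\eta]$. In both, the converse is read off directly from the definition of the interval, since $\mu\not\le\nu$ and $\mu\not\le\eta$.
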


\begin{proof}
  Suppose, aiming for contradiction, that \(\mu \leq \eta\).  
  Since \(\nu \wedge \eta \leq \eta\), \(\mu \leq \eta\) and the set \((- \infty, \eta]\) is totally ordered, we have
  \[
  \nu \wedge \eta \leq \mu \quad \text{or} \quad \mu < \nu \wedge \eta.
  \]
  If \(\mu < \nu \wedge \eta\), then \(\mu < \nu\), which is a contradiction.  
  Hence, \(\nu \wedge \eta \leq \mu \leq \eta\) and thus \(\mu \in [\nu, \eta]\), contradicting the fact that \(\nu \sim_{\mu} \eta\).  
  Therefore, \(\mu \not\leq \eta\).

  For the converse, since \(\mu \not\leq \nu\) and \(\mu \not\leq \eta\), it follows that \(\mu \notin [\nu, \eta]\). Hence, \(\nu \sim_{\mu} \eta\), as we wanted to prove.
\end{proof}

In this case, we define
\[
\mathcal{B}_{\nleq}(\mu):=\{\rho \in \mathcal{T} \mid \mu \nleq \rho\}=[\eta]_{\mu},
\]
where $\eta \in \mathcal{T}$ is any element such that $\mu\not\leq\eta$.

\begin{Prop}\label{carc}
For a valuation $\mu \in \mathcal{T}(v, \Lambda)$, we have
$\mathcal{T}_{\mu} = \operatorname{TD}(\mu) \sqcup \{\mathcal{B}_{\nleq}(\mu)\}$.

\end{Prop}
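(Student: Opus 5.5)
The plan is to sort the elements of $\mathcal{T}\setminus\{\mu\}$ by how they compare with $\mu$. For any $\nu\neq\mu$, either $\mu<\nu$ or $\mu\not\leq\nu$. By Lemma \ref{ConjB}, all $\nu$ with $\mu\not\leq\nu$ form a single $\sim_\mu$-class, namely $\mathcal{B}_{\nleq}(\mu)$, and no $\eta$ with $\mu<\eta$ lies in that class. So it suffices to prove that on $\{\nu\in\mathcal{T}\mid \mu<\nu\}$ the relations $\sim_\mu$ and $\sim_{\mathrm{tan}}$ coincide. Then the classes $[\nu]_\mu$ with $\nu>\mu$ are exactly the elements of $\operatorname{TD}(\mu)$, and together with $\mathcal{B}_{\nleq}(\mu)$ this gives the disjoint union. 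If $\mu$ is maximal, the set $\{\nu>\mu\}$ is empty and only $\mathcal{B}_{\nleq}(\mu)$ remains (when $\mathcal{T}\neq\{\mu\}$), so $\operatorname{TD}(\mu)=\emptyset$ is read consistently. I will handle this degenerate case first and then assume $\mu$ is not maximal.

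Fix $\nu,\eta>\mu$. Since $\mu\le\nu$ and $\nu\wedge\eta\le\nu$, the two are comparable because $(-\infty,\nu]$ is totally ordered. Also $\mu\le\eta$ gives $\mu\le\nu\wedge\eta$. I then split into two cases.

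Case $\mu<\nu\wedge\eta$. Then $\mu\notin[\nu\wedge\eta,\nu]\cup[\nu\wedge\eta,\eta]=[\nu,\eta]$, because every element of that set is $\ge\nu\wedge\eta>\mu$. Hence $\nu\sim_\mu\eta$. Moreover $\nu\wedge\eta\in(\mu,\nu]\cap(\mu,\eta]$, so $\nu\sim_{\mathrm{tan}}\eta$.

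Case $\mu=\nu\wedge\eta$. Then $\mu\in[\nu,\eta]$, so $\nu\not\sim_\mu\eta$. If some $\rho\in(\mu,\nu]\cap(\mu,\eta]$ existed, then $\rho\le\nu\wedge\eta=\mu$, contradicting $\rho>\mu$. So $\nu\not\sim_{\mathrm{tan}}\eta$.

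In both cases the two relations agree, which proves the claim. The only point needing care is that $\nu\wedge\eta$ exists, and this is Proposition \ref{LemExisteJoint}. Proposition \ref{prop:tangent_relation} is not needed, though it can be mentioned to identify the classes with tangent directions $\mathbf{t}(\mu,\nu)$. The main obstacle is just the bookkeeping: confirming that $\sim_\mu$ really is an equivalence relation, which uses Lemma \ref{lema1}, and checking the degenerate maximal case.
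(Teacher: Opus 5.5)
Your proof is correct and follows essentially the same route as the paper: Lemma~\ref{ConjB} disposes of the elements not above $\mu$, and for $\nu,\eta>\mu$ both $\nu\sim_\mu\eta$ and $\nu\sim_{\mathrm{tan}}\eta$ reduce to the condition $\mu<\nu\wedge\eta$. Your explicit treatment of the maximal case and of transitivity of $\sim_\mu$ via Lemma~\ref{lema1} are harmless additions that the paper leaves implicit.
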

\begin{proof}
Let $\nu \in \mathcal{T} \setminus \{\mu\}$. We consider two cases: $\mu < \nu$ and $\mu \nless \nu$.
If $\mu \nless \nu$, then by Lemma~\ref{ConjB} we have $[\nu]_{\mu} = \mathcal{B}_{\not\leq}(\mu)$.

Now suppose $\mu < \nu$. Take $\omega \in [\nu]_{\mu}$. Then $\mu \notin [\nu, \omega]$. By Lemma~\ref{ConjB}, it follows that $\mu < \omega$. Hence, $\mu \leq \nu \wedge \omega$. If $\mu = \nu \wedge \omega$, this would imply $\mu \in [\nu, \omega]$, contradicting the fact that $\nu \sim_{\mu} \omega$. Therefore, $\mu < \nu \wedge \omega$, and consequently $\nu \wedge \omega \in (\mu, \nu] \cap (\mu, \omega]$. Thus, $\omega \in [\nu]_{\mathrm{tan}}$. Conversely, if $\omega \in [\nu]_{\mathrm{tan}}$, then $\mu < \omega$ and $(\mu, \nu] \cap (\mu, \omega] \neq \emptyset$. From this it follows that $\mu < \nu \wedge \omega$, and hence $\mu \notin [\nu, \omega]$, showing that $\omega \in [\nu]_{\mu}$. Therefore, $[\nu]_{\mu} = [\nu]_{\mathrm{tan}}$.
\end{proof}

By Proposition~\ref{prop:tangent_relation}, the elements of $\mathcal{T}_{\mu}$ that lie above $\mu$ (that is, the classes $[\nu]_{\mu}$ for which $\mu < \omega$ for all $\omega \in [\nu]_{\mu}$) are in bijection with the tangent directions. Besides these elements, there is exactly one additional element in $\mathcal{T}_{\mu}$, namely the set $\mathcal{B}_{\nleq}(\mu)$.

\begin{Obs}\label{obsTD}
Let $\mu \in \mathcal{T}(v, \Lambda)$ be a non-maximal valuation.
Consequently, by \cite[Theorem 1.3 + 1.4]{Nart_1}, we have:
\begin{itemize}
\item if $\mu$ is \textit{value-transcendental}, then $\#\mathrm{TD}(\mu) = 1$;
\item if $\mu$ is \textit{residue-transcendental}, then $\#\mathrm{TD}(\mu) > 1$.
\end{itemize}
With this in mind, observe that if $\mu, \nu \in \mathcal{T}(v, \Lambda)$ are incomparable, then $\mu \wedge \nu$ is residue-transcendental. For this reason, we consider an ordered abelian group $\Lambda$ containing the divisible hull $\Gamma_{\mathbb{Q}}$ in order to guarantee the existence of the $\mu\wedge \nu$ for any two valuations $\mu$ and $\nu$.

By Remark \ref{obs2}, the tree $\mathcal{T}(v, \Gamma_{\mathbb{Q}})$ contains all valuations extending $v$ to $K[x]$, except for the value-transcendental ones. For each element in $\gamma\in\Lambda \setminus \Gamma_{\mathbb{Q}}$, there exists a value-transcendental valuation $\eta$ in $\mathcal{T}(v, \Lambda)$ (which can be constructed through the augmentation process, see (\ref{Vtipos})). However, since $\#\mathrm{TD}(\eta) = 1$, the valuation $\eta$ does not create new ``branches''; that is, the value-transcendental valuations fill the gaps of the tree $\mathcal{T}(v, \Gamma_{\mathbb{Q}})$.
\end{Obs}

\begin{Cor}
Let $\mu \in \mathcal{T}(v, \Lambda)$.
\begin{enumerate}
\item[(a)] If $\mu$ is maximal, then $\#\,\mathcal{T}_{\mu}=1$.
\item[(b)] If $\mu$ is value-transcendental, then $\#\,\mathcal{T}_{\mu}=2$.
\end{enumerate}
\end{Cor}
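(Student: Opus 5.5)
The count comes directly from Proposition~\ref{carc}, which gives $\mathcal{T}_{\mu} = \operatorname{TD}(\mu) \sqcup \{\mathcal{B}_{\nleq}(\mu)\}$. In this disjoint union, $\mathcal{B}_{\nleq}(\mu)$ always contributes exactly one element, provided the set $\{\rho\in\mathcal{T}\mid \mu\nleq\rho\}$ is nonempty. So the work reduces to two things: computing $\#\operatorname{TD}(\mu)$ in each case, and checking that $\mathcal{B}_{\nleq}(\mu)$ really is a class.

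For (a), let $\mu$ be maximal. Then no $\nu\in\mathcal{T}$ satisfies $\mu<\nu$, so $\{\nu\in\mathcal{T}\mid \mu<\nu\}=\emptyset$ and $\operatorname{TD}(\mu)=\emptyset$. Note that $\operatorname{TD}$ was defined only for non-maximal $\mu$, but the decomposition in Proposition~\ref{carc} still holds verbatim: its proof shows that every $\nu\neq\mu$ satisfies $\mu\not<\nu$, so Lemma~\ref{ConjB} gives $[\nu]_\mu=\mathcal{B}_{\nleq}(\mu)$.

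It remains to show $\mathcal{T}\setminus\{\mu\}\neq\emptyset$. I would use the Gauss valuation $\mu_0=[\,\cdot\,]$ on $x$, namely $\mu_0(\sum a_jx^j)=\min v(a_j)$. Since $\mu_0(x)=0\in\Gamma_{\mathbb Q}$, Lemma~\ref{lemqnSobrecTransc} shows $\mu_0$ is residue-transcendental. By Theorem~\ref{MLteo2.3}, $\mu_0$ is therefore non-maximal, so $\mu_0\neq\mu$. Hence $\#\mathcal{T}_\mu=1$.

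For (b), let $\mu$ be value-transcendental. By Theorem~\ref{MLteo2.3}, $\mu$ is non-maximal, and Remark~\ref{obsTD} (via \cite{Nart_1}) gives $\#\operatorname{TD}(\mu)=1$. We then need some $\rho$ with $\mu\nleq\rho$; by Lemma~\ref{ConjB}, this makes $\mathcal{B}_{\nleq}(\mu)$ a genuine class disjoint from $\operatorname{TD}(\mu)$.

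The main obstacle is exhibiting this $\rho$. My first attempt is to take any $\eta>\mu$ (one exists because $\mu$ is non-maximal) together with a valuation incomparable to, or strictly below, $\mu$. The cleanest choice is a maximal element $\rho$ not above $\mu$. Concretely, choose $\phi\in\operatorname{KP}(\mu)$ and consider $\mu$'s unique tangent direction. Then pick a monic irreducible $g$ of degree $1$ with $\mu(g)$ arranged so that the augmentation $[\mu_0; g,\infty]$ is not above $\mu$. For example, if $\mu(x-a)=\mu_0(x-a)$ for some $a$, take a different branch at $\mu\wedge\mu_0$.

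A simpler alternative is to take $\rho=\mu\wedge\mu'$, where $\mu'$ is any valuation with $\mu'\neq\mu$, $\mu'\le\mu$. If $\mu$ is not the root, this $\rho<\mu$ satisfies $\mu\nleq\rho$. If no element lies below $\mu$, then $\mu$ is minimal, and I would instead use a second tangent direction at the residue-transcendental $\mu_0$ (or at any residue-transcendental valuation below everything) to produce an element incomparable with $\mu$.

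Either way, we get $\#\mathcal{T}_\mu=1+1=2$.
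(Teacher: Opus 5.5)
Your main line is the paper's own proof: Proposition~\ref{carc} together with Remark~\ref{obsTD}, with Theorem~\ref{MLteo2.3} for (non-)maximality. What you add is a check that $\mathcal{B}_{\nleq}(\mu)$ is nonempty. The paper leaves this implicit in Proposition~\ref{carc}, whose proof only shows that every class lies in $\operatorname{TD}(\mu)$ or equals $\mathcal{B}_{\nleq}(\mu)$. In (a) your check is correct: the Gauss valuation is residue-transcendental by Lemma~\ref{lemqnSobrecTransc} with $q=x$, $n=1$, $c=1$. It is therefore non-maximal, hence different from $\mu$.

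In (b), however, your argument for the existence of $\rho$ with $\mu\nleq\rho$ does not go through.
\begin{itemize}
\item The first construction is only sketched. No $g$ is specified, $[\mu_0;g,\infty]$ requires $g\in\operatorname{KP}(\mu_0)$, and nothing shows $\mu\nleq[\mu_0;g,\infty]$.
\item The fallback for a minimal $\mu$ also fails. Valuations in tangent directions at $\mu_0$ are $>\mu_0$, so one of them fails to be $\geq\mu$ only if $\mu\nleq\mu_0$, and then $\rho=\mu_0$ already works. If instead $\mu<\mu_0$ (e.g.\ $\mu(\sum_j a_jx^j)=\min_j\{v(a_j)+j\gamma\}$ with $\gamma\in\Lambda\setminus\Gamma_{\mathbb Q}$, $\gamma<0$), all those branches lie above $\mu$.
\item The parenthetical alternative is incoherent in this case. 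A residue-transcendental valuation below everything would be $\leq\mu$, hence equal to $\mu$ if $\mu$ is minimal, but $\mu$ is value-transcendental.
\end{itemize}

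The missing observation is that the minimal case never occurs:
\begin{itemize}
\item Since $\mu$ is Krull, $\delta:=\mu(x-b)\in\Lambda$ for any $b\in K$.
\item Choose $\delta'\in\Lambda$ with $\delta'<\delta$. This is possible because $0\neq\Gamma_\mu\subseteq\Lambda$.
\item Let $\omega$ be the depth-zero valuation $\omega(\sum_j a_j(x-b)^j)=\min_j\{v(a_j)+j\delta'\}$, which lies in $\mathcal{T}(v,\Lambda)$.
\item By (V1) and (V2), $\mu(f)\geq\min_j\{v(a_j)+j\delta\}\geq\omega(f)$ for every $f$.
\item Also $\omega(x-b)=\delta'<\mu(x-b)$, so $\omega<\mu$ and $\omega\in\mathcal{B}_{\nleq}(\mu)$.
\end{itemize}
With this in place of your last two paragraphs, the count $\#\,\mathcal{T}_\mu=1+1=2$ is fully justified.
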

\begin{proof}
This follows directly from Proposition \ref{carc} and Remark \ref{obsTD}.
\end{proof}

\begin{figure}[htbp]
    \centering
    \begin{subfigure}[b]{0.32\textwidth}
        \centering
        \definecolor{qqqqff}{rgb}{0,0,1}
        \definecolor{sqsqsq}{rgb}{0.12549019607843137,0.12549019607843137,0.12549019607843137}
        \begin{tikzpicture}[line cap=round,line join=round,>=triangle 45,x=3cm,y=2cm]
        \clip(5.3,2.5) rectangle (7.415437932905358,5.309592941055216);
        \draw [line width=0.5pt,color=qqqqff] (6,5)-- (6,3);
        \draw [line width=0.5pt,color=qqqqff] (6.404651750219197,4.0126354806940725)-- (6,3.287880238014136);
        \draw [line width=0.5pt,color=qqqqff] (5.60017,4.83161)-- (6,4.237309605924853);
        \begin{scriptsize}
        \draw [fill=sqsqsq] (6,5) circle (2pt);
        \draw[color=sqsqsq] (6.147167989875226,5.0320178467621455) node {$\mu$};
        \draw[color=qqqqff] (6.129114325205758,4.102254116284541) node {$[\nu]_{\mu}$};
        \end{scriptsize}
        \end{tikzpicture}
        \caption{$\mu$ maximal}
        \label{fig:maximal}
    \end{subfigure}
    \hfill
    \begin{subfigure}[b]{0.32\textwidth}
        \centering
        \definecolor{qqqqff}{rgb}{0,0,1}
        \definecolor{ffqqqq}{rgb}{1,0,0}
        \definecolor{sqsqsq}{rgb}{0.12549019607843137,0.12549019607843137,0.12549019607843137}
        \begin{tikzpicture}[line cap=round,line join=round,>=triangle 45,x=3cm,y=2.5cm]
        \clip(5.6,2.7460187825498554) rectangle (7.329139459741863,5.10856885083845);
        \draw [line width=0.5pt,color=ffqqqq] (6.2,4.8)-- (6.20132898388363,3.86981318366514);
        \draw [line width=0.5pt,color=qqqqff] (6.20132898388363,3.86981318366514)-- (6.2,3.2);
        \draw [line width=0.5pt,color=qqqqff] (6.60302,3.87433)-- (6.200396820690979,3.3999992126635963);
        \draw [line width=0.5pt,color=ffqqqq] (5.84928,4.61001)-- (6.200942068836359,4.140624049354559);
        \begin{scriptsize}
        \draw [fill=sqsqsq] (6.20132898388363,3.86981318366514) circle (2pt);
        \draw[color=sqsqsq] (6.312600059610862,3.9252848455476483) node {$\mu$};
        \draw[color=qqqqff] (6.067505579737379,3.4913470779028044) node {$[\nu]_{\mu}$};
        \draw[color=ffqqqq] (6.4,4.495832651154758) node {$[\eta]_{\mu}$};
        \end{scriptsize}
        \end{tikzpicture}
        \caption{$\mu$ value-transcendental}
        \label{fig:value-trans}
    \end{subfigure}
    \hfill
    \begin{subfigure}[b]{0.32\textwidth}
        \centering
        \definecolor{qqccqq}{rgb}{0,0.8,0}
        \definecolor{yqqqyq}{rgb}{0.5019607843137255,0,0.5019607843137255}
        \definecolor{qqqqff}{rgb}{0,0,1}
        \definecolor{ffqqqq}{rgb}{1,0,0}
        \definecolor{sqsqsq}{rgb}{0.12549019607843137,0.12549019607843137,0.12549019607843137}
        \begin{tikzpicture}[line cap=round,line join=round,>=triangle 45,x=3cm,y=2.5cm]
        \clip(5.5,2.746018782549854) rectangle (7.457713613118121,5.108568850838449);
        \draw [line width=0.5pt,color=ffqqqq] (6.2000976754066395,4.738918159881731)-- (6.204115617699648,4.164352411981613);
        \draw [line width=0.5pt,color=qqqqff] (6.204115617699648,4.164352411981613)-- (6.2,3.2);
        \draw [line width=0.5pt,color=qqqqff] (6.60302,3.87433)-- (6.201228880409683,3.4879455463554807);
        \draw [line width=0.5pt,color=yqqqyq] (6.204115617699648,4.164352411981613)-- (6.6943,4.534);
        \draw [line width=0.5pt,color=qqccqq] (6.204115617699648,4.164352411981613)-- (5.67375,4.54204);
        \begin{scriptsize}
        \draw [fill=sqsqsq] (6.204115617699648,4.164352411981613) circle (2pt);
        \draw[color=sqsqsq] (6.316618001903874,4.138235787077061) node {$\mu$};
        \draw[color=ffqqqq] (6.316618001903874,4.568155612428898) node {$[\eta_2]_{\mu}$};
        \draw[color=qqqqff] (6.067505579737383,3.6400109427440928) node {$[\nu]_{\mu}$};
        \draw[color=yqqqyq] (6.6,4.3) node {$[\eta_3]_{\mu}$};
        \draw[color=qqccqq] (5.987146733877225,4.463689112810695) node {$[\eta_1]_{\mu}$};
        \end{scriptsize}
        \end{tikzpicture}
        \caption{$\mu$ residue-transcendental}
        \label{fig:residue-trans}
    \end{subfigure}
    \caption{Representations of the elements of $\mathcal{T}_{\mu}$ in the valuative tree.}
    \label{fig:valuation_types}
\end{figure}
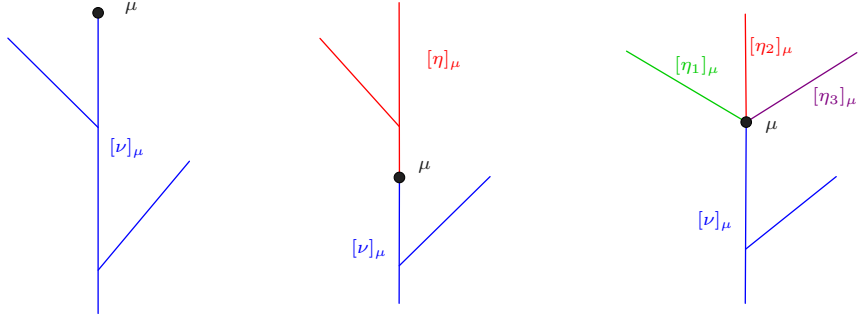

\subsection{The Scott topology} 

Let $(\mathcal{P}, \leq)$ be a partially ordered set.

\begin{Def}

 We say that a non-empty subset $\mathcal{D}$ of $\mathcal{P}$ is a \textbf{directed set} if every pair of elements in $\mathcal{D}$ has an upper bound in $\mathcal{D}$ itself. A subset $\mathcal{O} \subseteq \mathcal{P}$ is \textbf{Scott open} if it satisfies the following conditions:
\begin{itemize}
\item \textbf{Upper Set:}
For all $x, y \in \mathcal{P}$ such that $x \in \mathcal{O}$ and $x\leq y$, we have $y \in \mathcal{O}$ (that is, if an element is in $\mathcal{O}$, then all elements larger than it are also in $\mathcal{O}$).

\item \textbf{Inaccessible by directed joins:} For every directed set $\mathcal{D} \subseteq \mathcal{P}$ that has a supremum in $\mathcal{O}$, it follows that $\mathcal{D}\cap \mathcal{O}\neq \emptyset$.
\end{itemize}
    
\end{Def}

\begin{Def}
The \textbf{Scott topology} $\mathfrak{S}$ on $\mathcal{P}$ is defined as the set of all Scott open sets.
\end{Def}

We will denote by $\mathfrak{S}$ and $\mathfrak{S}_{\mu}$ the Scott topologies of the spaces $(\mathcal{T}, \leq)$ and $(\mathcal{T}, \leq_{\mu})$, respectively.

Our goal is to present a comparison between the weak tree topology and the Scott topology. We begin by proving a relation between the weak tree topology and $\mathfrak{S}_\mu$.

\begin{Lema}\label{lem:superior}
The set $[\nu]_{\mu}$ is an upper set in $(\mathcal{T}, \leq _{\mu})$, for all $\nu \in \mathcal{T} \setminus\{\mu\}$.
\end{Lema}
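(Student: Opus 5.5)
We must show: if $\eta \in [\nu]_\mu$ and $\eta \leq_\mu \rho$, then $\rho \in [\nu]_\mu$. Here $\eta\neq\mu$, and $\eta\leq_\mu\rho$ means $\eta\in[\mu,\rho]$, so $\rho\neq\mu$ as well (otherwise $[\mu,\mu]=\{\mu\}$ would force $\eta=\mu$). Since $\sim_\mu$ is an equivalence relation and $\eta\sim_\mu\nu$, it suffices to show $\eta\sim_\mu\rho$, that is, $\mu\notin[\eta,\rho]$.

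The plan is to argue by contradiction using Lemma~\ref{lema1}. The key step is to first establish the ``tree'' property
\[
\eta\in[\mu,\rho] \implies [\mu,\rho]=[\mu,\eta]\cup[\eta,\rho],
\]
and more precisely that $[\eta,\rho]\subseteq[\mu,\rho]$ with $[\mu,\eta]\cap[\eta,\rho]=\{\eta\}$. Granting this, suppose $\mu\in[\eta,\rho]$. Then $\mu\in[\mu,\eta]\cap[\eta,\rho]=\{\eta\}$, so $\mu=\eta$, a contradiction. Hence $\mu\notin[\eta,\rho]$ and $\rho\in[\nu]_\mu$.

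To prove the tree property, I would compute directly with infima, splitting into the two cases of the definition of $\eta\in[\mu,\rho]$.
\begin{itemize}
\item \emph{Case $\mu\wedge\rho\le\eta\le\rho$.} Then $\eta\wedge\rho=\eta$, so $[\eta,\rho]$ is the chain $[\eta,\rho]\subseteq[\mu\wedge\rho,\rho]$. Also $\mu\wedge\eta=\mu\wedge\rho$: both lie in the totally ordered set $(-\infty,\mu]$, and one checks each is $\le$ the other using $\mu\wedge\rho\le\eta\le\rho$. So $[\mu,\eta]=[\mu\wedge\rho,\mu]\cup[\mu\wedge\rho,\eta]$. The intersection of the chains $[\mu\wedge\rho,\eta]$ and $[\eta,\rho]$ inside $(-\infty,\rho]$ is $\{\eta\}$. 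An element of $[\mu\wedge\rho,\mu]\cap[\eta,\rho]$ is $\ge\eta$ and $\le\mu$, hence $\le\mu\wedge\rho\le\eta$, so it equals $\eta$.
\item \emph{Case $\mu\wedge\rho\le\eta\le\mu$.} This is symmetric: $\eta\wedge\rho=\mu\wedge\rho$ and $\mu\wedge\eta=\eta$, with the same kind of chain comparisons.
\end{itemize}

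I expect this bookkeeping of infima to be the main obstacle. It relies only on the total order of the sets $(-\infty,\cdot\,]$ and on Proposition~\ref{LemExisteJoint}, but the subcases have to be handled carefully so that no element escapes the two chains.
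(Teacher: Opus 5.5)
Your proof is correct and follows essentially the paper's route. Both arguments show $\mu\notin[\eta,\rho]$ because $\eta$ lies between $\mu$ and $\rho$, and then combine this with $\mu\notin[\nu,\eta]$ via Lemma~\ref{lema1}, which is exactly what your appeal to transitivity of $\sim_\mu$ amounts to; the one difference is that you check the tree property $[\mu,\eta]\cap[\eta,\rho]=\{\eta\}$ by the case analysis on infima, whereas the paper only asserts that $[\eta,\rho]$ is the $\leq_\mu$-segment from $\eta$ to $\rho$, so your write-up supplies a step the paper leaves implicit.
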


\begin{proof}
Take elements $\eta, \rho \in \mathcal{T}$ such that $\eta \in [\nu]_{\mu}$ and $\eta < _{\mu} \rho$. We will show that $\rho \in [\nu]_{\mu}$.

Since $\eta <_{\mu} \rho$, we have $[\eta,\rho] = \{\omega \in \mathcal{T} \mid \eta \leq_{\mu} \omega \leq_{\mu} \rho\}$. Since $\mu$ is the smallest element of $(\mathcal{T}, \leq_{\mu})$, we have $\mu \notin [\eta, \rho]$. On the other hand, since $\eta \in [\nu]_{\mu}$, by the definition of $[\nu]_{\mu}$, we have $\mu \notin [\nu, \eta]$. By Lemma \ref{lema1}, we have $[\nu, \rho] \subseteq [\nu, \eta] \cup [\eta, \rho]$. Since $\mu$ does not belong to either set in the union, we conclude that $\mu \notin [\nu, \rho]$. Therefore, $\rho \in [\nu]_{\mu}$.
\end{proof}

\begin{Lema}\label{lem:inacessivel}
 Every sub-basic open set $[\nu]_{\mu}$ in the weak tree topology  is inaccessible by directed joins.
\end{Lema}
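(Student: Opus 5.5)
We need to show: if $\mathcal{D}\subseteq\mathcal{T}$ is directed with respect to $\leq_{\mu}$ and has a supremum $\sigma$ (in $(\mathcal{T},\leq_\mu)$) lying in $[\nu]_{\mu}$, then $\mathcal{D}\cap[\nu]_{\mu}\neq\emptyset$. The statement is meant for the order $\leq_\mu$, which matches Lemma~\ref{lem:superior}. Since $\mu$ is the root of $(\mathcal{T},\leq_\mu)$ and $\sigma\neq\mu$ (as $\mu\notin[\nu]_\mu$), the plan is to locate an element of $\mathcal{D}$ strictly above $\mu$ in the branch determined by $\sigma$.

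The first step is to note that $\mathcal{D}$ is in fact totally ordered in $(\mathcal{T},\leq_\mu)$. Indeed, for $\eta,\rho\in\mathcal{D}$ there is an upper bound $\omega\in\mathcal{D}$, and both $\eta,\rho$ lie in $\{\xi\mid \xi\leq_\mu\omega\}=[\mu,\omega]$. Since segments for $\leq_\mu$ coincide with segments for $\leq$, they are totally ordered (because $(-\infty,\cdot]$ is totally ordered in $\mathcal T$). Moreover every $\eta\in\mathcal{D}$ satisfies $\eta\leq_\mu\sigma$, so $\mathcal{D}\subseteq[\mu,\sigma]$, which is a chain.

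The second step is the key one. We claim there is $\eta\in\mathcal{D}$ with $\eta\neq\mu$. Otherwise $\mathcal{D}=\{\mu\}$ and its supremum would be $\mu\neq\sigma$, a contradiction. Take such an $\eta$. Then $\mu<_\mu\eta\leq_\mu\sigma$, so $\eta\in[\mu,\sigma]$ with $\eta\neq\mu$. We now want $\mu\notin[\eta,\sigma]$. Since $\eta\leq_\mu\sigma$, the interval $[\eta,\sigma]$ equals $\{\omega\mid\eta\leq_\mu\omega\leq_\mu\sigma\}$, exactly as used in the proof of Lemma~\ref{lem:superior}. Because $\mu$ is the least element and $\mu\neq\eta$, we get $\mu\notin[\eta,\sigma]$, hence $\eta\sim_\mu\sigma$ and $\eta\in[\sigma]_\mu=[\nu]_\mu$. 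Here we use that the $\sim_\mu$-classes are equivalence classes, so $\sigma\in[\nu]_\mu$ gives $[\sigma]_\mu=[\nu]_\mu$.

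The only delicate point is justifying that $[\eta,\sigma]$, defined via $\wedge$ in $(\mathcal T,\leq)$, coincides with the $\leq_\mu$-segment when $\eta\leq_\mu\sigma$. The paper asserts this when introducing $\leq_\mu$ and uses it in Lemma~\ref{lem:superior}, so we invoke it directly. Notably, neither density of $\Lambda$ nor the total order of $\mathcal{D}$ is really needed beyond showing $\mathcal{D}\neq\{\mu\}$. This is because $[\nu]_\mu$ is the whole branch above $\mu$ containing $\sigma$, and any nonroot element below $\sigma$ already lies in that branch.
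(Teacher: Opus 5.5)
Your proof is correct, but it follows a different and shorter route than the paper's. The paper argues by contraposition and works in the original order $\le$. It takes a directed $\mathcal{D}\subseteq\mathcal{T}\setminus[\nu]_\mu$ with $\eta=\sup\mathcal{D}$ computed in $(\mathcal{T},\le)$, and splits according to whether $\mu\le\nu$:
\begin{itemize}
\item If $\mu\nleq\nu$, then $[\nu]_\mu=\mathcal{B}_{\nleq}(\mu)$ by Lemma~\ref{ConjB}. Every element of $\mathcal{D}$ therefore lies above $\mu$, and so does $\eta$.
\item If $\mu\le\nu$, it supposes $\mu<\nu\wedge\eta$. Using the total order of $(-\infty,\eta]$, it extracts $\rho\in\mathcal{D}$ strictly above $\nu\wedge\eta$ (or strictly above $\mu$ when $\eta\le\nu$), which contradicts $\rho\notin[\nu]_\mu$.
\end{itemize}

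You instead work directly in $(\mathcal{T},\le_\mu)$, where $\mu$ is the minimum. There, $\rho\neq\mu$ already means $\mu<_\mu\rho$, and any such $\rho\in\mathcal{D}$ lies in the branch $[\sigma]_\mu=[\nu]_\mu$. Both the case split and the approximation of the supremum from below disappear.

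What your version buys is brevity and an exact match with what Corollary~\ref{subbasico_contido_scott} needs, since Lemma~\ref{lem:superior} is also stated for $\le_\mu$. As written, the paper computes suprema in $(\mathcal{T},\le)$, so to feed that corollary its argument must be reread in the tree $(\mathcal{T},\le_\mu)$, where only its second case survives. What the paper's version buys is independence from $\mu$ being the root: it handles $\mu$ anywhere in the tree, including the downward class $\mathcal{B}_{\nleq}(\mu)$.

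The one point you take on trust is that $\mu\notin[\eta,\sigma]$ whenever $\eta\in[\mu,\sigma]$ and $\eta\neq\mu$. This is easy to confirm in the original order:
\begin{itemize}
\item If $\eta\le\sigma$, then $[\eta,\sigma]=\{\omega\mid\eta\le\omega\le\sigma\}$. If $\mu$ were in it, then $\eta\le\mu\le\sigma$, hence $\mu=\mu\wedge\sigma\le\eta$ and $\eta=\mu$.
\item If $\eta\le\mu$, then $\eta\wedge\sigma=\mu\wedge\sigma$. Now $\mu\in[\eta,\sigma]$ would force $\mu\le\eta$ or $\mu\le\sigma$, and either one gives $\eta=\mu$.
\end{itemize}
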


\begin{proof}
Consider a sub-basic open set $[\nu]_{\mu}$ of $\mathcal{T}$ and take a directed set $\mathcal{D}$ such that $\mathcal{D} \subseteq \mathcal{T} \setminus [\nu]_{\mu}$. We want to prove that $\sup \mathcal{D} =: \eta \notin [\nu]_{\mu}$. If $\mathcal{D} = \{\mu\}$, then $\sup \mathcal{D} = \mu \notin [\nu]_{\mu}$ and the result follows immediately.

Assume there exists $\rho \in \mathcal{D}$ such that $\rho \neq \mu$.

\noindent \textbf{Case 1:} Suppose first that $\mu \nleq \nu$. We will show that $\mu \leq \mathcal{D}$ (i.e., $\mu \leq \rho$ for all $\rho \in \mathcal{D}$). If there exists $\rho \in \mathcal{D}$ such that $\mu \nleq \rho$, then $\mu \notin [\rho,\nu]$ and $\rho \in [\nu]_{\mu}$, which contradicts $\mathcal{D} \subseteq \mathcal{T} \setminus [\nu]_{\mu}$. Therefore, $\mu \leq \rho$ for all $\rho \in \mathcal{D}$, as we wanted to show. Consequently, $\mu \leq \eta$ and, by Proposition~\ref{carc}, we have $\eta \notin [\nu]_{\mu}$.

\noindent \textbf{Case 2:} Now assume that $\mu \leq \nu$. We want to show that $\nu \wedge \eta \leq \mu$. Indeed, since $\mu \leq \nu$ and $\nu \wedge \eta \leq \nu$, we have $\mu < \nu \wedge \eta$ or $\nu \wedge \eta \leq \mu$, because $(-\infty, \nu]$ is totally ordered. Suppose, by contradiction, that $\mu < \nu \wedge \eta$. We split into two cases:
\begin{itemize}
\item If $\nu \wedge \eta < \eta$, then there exists $\rho \in \mathcal{D}$ such that $\nu \wedge \eta < \rho \leq \eta$. This implies that $\nu \wedge \eta = \nu \wedge \rho$ and consequently $\mu \notin [\nu,\rho]$, contradicting $\rho \notin [\nu]_{\mu}$.

\item If $\nu \wedge \eta = \eta$, then $\eta \leq \nu$. Since $\mu<\nu \wedge \eta \leq \eta$ and $\eta = \sup \mathcal{D}$, there exists $\rho \in \mathcal{D}$ such that $\mu < \rho \leq \eta \leq \nu$ and consequently $\mu \notin [\nu, \rho]$, again a contradiction.
\end{itemize}
Therefore, $\nu \wedge \eta \leq \mu$. Hence, $\mu \in [\nu,\eta]$ and the lemma is proved. 
\end{proof}

\begin{Cor}\label{subbasico_contido_scott}
   For a fixed $\mu \in \mathcal{T}$, $[\nu]_\mu$ is an open set in $\mathfrak{S}_\mu$ for every $\nu \in \mathcal{T}\setminus\{\mu\}$. 
\end{Cor}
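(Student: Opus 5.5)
The corollary is essentially the conjunction of Lemma \ref{lem:superior} and Lemma \ref{lem:inacessivel}, once the latter is read as a statement about the poset $(\mathcal{T},\leq_\mu)$. A set is Scott open in $(\mathcal{T},\leq_\mu)$ exactly when it is an upper set and is inaccessible by directed joins. Lemma \ref{lem:superior} already gives the first condition for $[\nu]_\mu$ in $(\mathcal{T},\leq_\mu)$, so the plan is to verify the second.

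Let $\mathcal{D}$ be a $\leq_\mu$-directed set whose $\leq_\mu$-supremum $\eta$ lies in $[\nu]_\mu$; we must find an element of $\mathcal{D}$ in $[\nu]_\mu$. Since $\mu$ is the root of $(\mathcal{T},\leq_\mu)$ and $\mu\notin[\nu]_\mu$, we have $\eta\neq\mu$. I would argue directly in the rooted order rather than transporting the proof of Lemma \ref{lem:inacessivel}, which is phrased for $\leq$ (the two agree when the root is below everything relevant, which is why Case 1 there reduces to $\mu\leq\mathcal{D}$).

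The main observation is that, because segments under $\leq_\mu$ coincide with segments under $\leq$, the class $[\nu]_\mu$ is the set of $\rho$ for which the $\leq_\mu$-meet of $\nu$ and $\rho$ is strictly above $\mu$. Equivalently, $[\nu]_\mu$ is the set of $\rho\neq\mu$ such that $(\mu,\rho]_{\leq_\mu}$ meets $(\mu,\nu]_{\leq_\mu}$. This should follow from the definition $\nu\sim_\mu\rho\iff\mu\notin[\nu,\rho]$, together with the fact that $[\nu,\rho]$ is the union of the two segments from $\nu\wedge_\mu\rho$ to $\nu$ and to $\rho$.

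With $\eta\in[\nu]_\mu$, set $\omega:=\nu\wedge_\mu\eta$, so that $\mu<_\mu\omega\leq_\mu\eta$. Since $\eta=\sup\mathcal{D}$ and $\omega\neq\mu$, I would look for some $\rho\in\mathcal{D}$ with $\rho\not\leq_\mu\omega'$ for some $\omega'$ chosen with $\mu<_\mu\omega'<_\mu\omega$. If no such $\rho$ existed, every element of $\mathcal{D}$ would lie in $(-\infty,\omega']_{\leq_\mu}$ or off the branch toward $\eta$. Directedness, together with the total order of $(\mu,\eta]_{\leq_\mu}$, should then make $\omega'$ an upper bound of $\mathcal{D}$, contradicting $\omega'<_\mu\eta=\sup\mathcal{D}$.

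This step is the main obstacle: one must find a tree element strictly between $\mu$ and $\omega$, or handle the case where none exists. If no such element exists, I would instead take $\omega'=\mu$ and conclude directly that the join of any element of $\mathcal{D}$ with $\eta$ lies above $\mu$ and comparable on the branch toward $\nu$. Once some $\rho\in\mathcal{D}$ satisfies $\mu<_\mu\rho\wedge_\mu\eta$, that element shares the branch $(\mu,\omega]$ with $\nu$, so $\mu\notin[\nu,\rho]$ and hence $\rho\in\mathcal{D}\cap[\nu]_\mu$, as required.
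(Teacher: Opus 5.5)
Your overall structure is the paper's: Scott openness in $(\mathcal{T},\leq_\mu)$ is split into the upper-set property, which is Lemma \ref{lem:superior}, and inaccessibility by directed joins. The difference lies in the second half. The paper simply invokes Lemma \ref{lem:inacessivel}, whose proof is written with $\leq$, $\wedge$ and $\leq$-suprema. You correctly notice that $\mathfrak{S}_\mu$ concerns $\leq_\mu$-directed sets and $\leq_\mu$-suprema, and these can differ from the $\leq$ notions below $\mu$. For instance, if $\rho<\mu$, then $\{\rho,\mu\}$ has supremum $\mu$ for $\leq$ but supremum $\rho$ for $\leq_\mu$. Arguing directly in the rooted order is therefore justified. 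The paper's citation only works if one reruns that proof in $\leq_\mu$, where only Case 2 occurs.

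That said, your inaccessibility argument is much heavier than necessary, and the ``main obstacle'' you identify is not real.

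\begin{itemize}
\item Every element of $\mathcal{D}$ is $\leq_\mu \eta$, so nothing in $\mathcal{D}$ lies ``off the branch toward $\eta$.'' Directedness plays no role.
\item The choice $\omega'=\mu$ always works. Some $\rho\in\mathcal{D}$ differs from $\mu$, since otherwise $\mathcal{D}=\{\mu\}$ and $\sup\mathcal{D}=\mu\notin[\nu]_\mu$.
\item For such a $\rho$ you have $\rho\in[\rho]_\mu$ and $\rho\leq_\mu\eta$. Lemma \ref{lem:superior} then gives $\eta\in[\rho]_\mu$. By transitivity of $\sim_\mu$ (which follows from Lemma \ref{lema1}), $[\rho]_\mu=[\eta]_\mu=[\nu]_\mu$, so $\rho\in\mathcal{D}\cap[\nu]_\mu$.
\end{itemize}

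This avoids an intermediate $\omega'$ strictly between $\mu$ and $\omega$, which would require density, and density is not assumed in this corollary. It also avoids characterizing $[\nu]_\mu$ through $\leq_\mu$-meets, whose existence the paper never establishes; you only say it ``should follow.''

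Finally, your fallback sentence is false as stated. It claims that the meet (you wrote ``join'') of \emph{any} element of $\mathcal{D}$ with $\eta$ lies strictly above $\mu$, but this fails when $\mu\in\mathcal{D}$. What is true, and sufficient, is that \emph{some} element of $\mathcal{D}$ has this property.
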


The proof of the main theorem in this section relies on the hypothesis that $\Lambda$ is densely ordered. We start with a lemma.



\begin{Lema} \label{lema_sup}
Assume that $\Lambda$ is densely ordered.
\begin{enumerate}
\item[(a)] For all $\mu, \nu \in \mathcal{T}(v, \Lambda)$ with $\mu \neq \nu$, the interval $(\mu, \nu)$ is nonempty.
\item[(b)] For all $\mu \in \mathcal{T}(v, \Lambda)$, we have $\sup (-\infty, \mu)=\mu$.
\end{enumerate}
\end{Lema}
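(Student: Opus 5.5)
Part (a) reduces to the comparable case. For part (b) the natural tool is the augmentation $[\rho;\phi,\gamma]$, which lets us manufacture valuations strictly between two given ones using density of $\Lambda$. Throughout we use that $\Lambda\supseteq\Gamma_{\mathbb{Q}}$, so that $\mu\wedge\nu$ exists by Proposition~\ref{LemExisteJoint}.

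For (a), set $\rho=\mu\wedge\nu$. If $\mu,\nu$ are incomparable, then $\rho<\mu$ and $\rho<\nu$, and $\rho\in(\mu,\nu)$ because $(\mu,\nu)$ is the union of the half-open segments $[\rho,\mu)$ and $[\rho,\nu)$. (If the convention excludes the vertex, pass to the comparable case on $[\rho,\mu]$.) So assume $\mu<\nu$. Then $\mu$ is not maximal, and we take $\phi\in\mathbf t(\mu,\nu)$. By Mac Lane--Vaquié theory (\cite{Nart_1,Nart_2}), $\phi$ is a key polynomial for $\mu$, and the truncation $\nu_\phi$ equals $[\mu;\phi,\nu(\phi)]$ and satisfies $\mu<\nu_\phi\le\nu$. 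Since $\mu(\phi)<\nu(\phi)$ in $\Lambda_\infty$, density of $\Lambda$ gives some $\gamma\in\Lambda$ with $\mu(\phi)<\gamma<\nu(\phi)$; if $\nu(\phi)=\infty$, take any $\gamma>\mu(\phi)$. Put $\eta=[\mu;\phi,\gamma]$. For every $f$ we have $\eta(f)=\min_j\{\mu(f_j)+j\gamma\}$, which lies between $\mu(f)$ and $\nu_\phi(f)\le\nu(f)$. Moreover $\mu(\phi)<\eta(\phi)=\gamma<\nu(\phi)$. Hence $\mu<\eta<\nu$, and $\eta\in(\mu,\nu)$.

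For (b), assume $(-\infty,\mu)\neq\emptyset$. (If $\mu$ is the root, i.e.\ minimal, the statement needs an interpretation, presumably with $\sup\emptyset$ understood as the minimum; we only treat $\mu$ non-minimal.) Clearly $\mu$ is an upper bound. Let $\omega$ be any upper bound of $(-\infty,\mu)$; we must show $\mu\le\omega$. Suppose not. Then $\mu\wedge\omega<\mu$, and every $\rho<\mu$ satisfies $\rho\le\omega$, hence $\rho\le\mu\wedge\omega$. By (a) there is some $\eta\in(\mu\wedge\omega,\mu)$. Since $\mu\wedge\omega<\mu$ are comparable, this $\eta$ satisfies $\mu\wedge\omega<\eta<\mu$. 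Then $\eta\in(-\infty,\mu)$ but $\eta\not\le\mu\wedge\omega$, a contradiction. Therefore $\sup(-\infty,\mu)=\mu$.

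The main obstacle is in (a): justifying that $\phi\in\mathbf t(\mu,\nu)$ is a key polynomial for $\mu$ with $\nu_\phi=[\mu;\phi,\nu(\phi)]$ and $\nu_\phi\le\nu$. This should be cited from \cite{Nart_1}. It is needed so that the augmented valuation $\eta$ is genuinely dominated by $\nu$.
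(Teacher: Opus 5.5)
Your proof is correct and follows the paper's argument. In (a), the incomparable case uses $\mu\wedge\nu$ and the comparable case uses the augmentation $[\mu;\phi,\gamma]$ with $\phi\in\mathbf t(\mu,\nu)$ and $\mu(\phi)<\gamma<\nu(\phi)$. In (b), you apply (a) to $(\mu\wedge\omega,\mu)$, merging the paper's two cases (an incomparable upper bound, and an upper bound below $\mu$) into the single case $\mu\not\le\omega$.

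One simplification: you do not need the identification $\nu_\phi=[\mu;\phi,\nu(\phi)]$ that you flag as the main obstacle. As in the paper, $\mu(f_j)\le\nu(f_j)$ together with \textbf{(V2)} for $\nu$ already gives $\eta(f)\le\min_j\{\nu(f_j)+j\nu(\phi)\}\le\nu(f)$. The only external input is then that $\phi$ is a key polynomial for $\mu$, so that $\eta$ is a valuation with $\mu<\eta$.
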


\begin{proof}$\,$
	
	\begin{enumerate}
		\item[(a)] If $\mu$ and $\nu$ are incomparable, then $\mu \wedge \nu \in (\mu, \nu)$, which proves the claim in this case.
		
		Now suppose $\mu$ and $\nu$ are comparable. Without loss of generality, assume $\mu<\nu$. 
		Let $\phi\in\mathbf{t}(\mu,\nu)$; then $\mu(\phi)<\nu(\phi)$. 
		Since $\Lambda$ has a dense order, there exists $\gamma\in\Lambda$ such that 
		$\mu(\phi)<\gamma<\nu(\phi)$. 
		Consider the augmentation
		\[
		\eta=[\mu;\phi,\gamma].
		\]
		We know that $\mu<\eta$. 
		On the other hand, for every $f\in K[x]$ written in its $\phi$-expansion $f=\sum_{j=0}^r f_j\phi^j$ 
		with $\deg(f_j)<\deg(\phi)$ for all $0\leq j\leq r$, we have
		\[
		\eta(f)=\min_{0\leq j\leq r}\{\mu(f_j)+j\gamma\}
		<\min_{0\leq j\leq r}\{\nu(f_j)+j\nu(\phi)\}
		\leq\nu(f),
		\]
		hence $\eta<\nu$. Therefore, $\eta\in(\mu,\nu)$ and the result follows. 
		
		\item[(b)] Suppose that there exists $\omega \in \mathcal{T}$ greater than all elements of $(-\infty, \mu)$, but not comparable to $\mu$. Then the interval $(\mu \wedge \omega, \mu)$ is empty, because if there existed $\rho \in (\mu \wedge \omega, \mu)$, then $\rho \in (-\infty, \mu)$ and $\rho \nleq \omega$, contradicting the choice of $\omega$. However, since $\Lambda$ has a dense order, by the previous item, $(\mu \wedge \omega, \mu)$ cannot be empty. Thus, every upper bound of $(-\infty, \mu)$ must be comparable to $\mu$.
		
		Now, suppose there exists $\eta \in \mathcal{T}$ such that $\nu \leq \eta < \mu$ for all $\nu \in (-\infty, \mu)$. This would imply that $(\eta, \mu) = \emptyset$, which again contradicts the fact that $\Lambda$ is densely ordered. Therefore, $\sup(-\infty, \mu) = \mu$.
	\end{enumerate}

\end{proof}

\begin{Obs}\label{diferencaWS} We can list some differences between the Scott topology and the weak tree topology: 
    
\begin{itemize}
\item  If we consider the orders $\leq_{\mu}$ and $\leq_{\nu}$ on $\mathcal T$ for ${\mu}\neq {\nu}$, then the associated weak tree topologies are the same, but the Scott topologies $\mathfrak S_{\mu}$ and $\mathfrak S_{\nu}$ are not.

\item Suppose $\Lambda$ has a dense order and take $\mu, \nu \in \mathcal{T}$ distinct. If we assume that $\mu<\nu$, then every Scott open set containing $\mu$ also contains $\nu$, since these open sets are upper sets. This shows that the Scott topology is not Hausdorff. In fact, the Scott topology does not satisfies the $T_1$-separation axiom, but it is  $T_0$-separable. On the other hand, by Lemma~\ref{lema_sup}, there exists $\eta\in(\mu,\nu)$, and thus $[\mu]_{\eta}$ and $[\nu]_{\eta}$ are disjoint open sets separating $\mu$ and $\nu$. Therefore, the weak tree topology is Hausdorff.
    \end{itemize}
\end{Obs}

 However, we will be able to describe the weak tree topology in terms of the Scott topology.

\begin{Prop}\label{prop:Scott_mais_fina}
Assume that $\Lambda$ is densely ordered. Every Scott open set $\mathcal{O}$ in $\mathcal{T}(v, \Lambda)$ is open in the weak tree topology.
\end{Prop}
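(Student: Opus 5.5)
We must show: given a Scott open set $\mathcal{O}$ of $(\mathcal{T},\leq)$ and $\nu\in\mathcal{O}$, there is a weak-tree open set $W$ with $\nu\in W\subseteq\mathcal{O}$. The natural candidate is a single subbasic set $[\nu]_{\eta}$ for a suitable $\eta<\nu$. Any $\eta$ with $\eta<\nu$ gives $\nu\in[\nu]_\eta$, since $\eta\neq\nu$.

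\textbf{Trivial case.} If $\nu$ is the minimum of $\mathcal{T}$, then $\mathcal{O}$, being an upper set, is all of $\mathcal{T}$, and it is trivially open. Such a minimum does not exist in general: by Lemma~\ref{lema_sup}(a) and density one can always go lower, using incomparable elements and their infima. Since the tree is not rooted at a minimum, this case may simply not arise, but it is harmless either way.

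\textbf{Main case: $(-\infty,\nu)\neq\emptyset$.}
\begin{enumerate}
\item By Lemma~\ref{lema_sup}(b), $\sup(-\infty,\nu)=\nu$.
\item The set $(-\infty,\nu)$ is totally ordered (it sits inside $(-\infty,\nu]$) and nonempty, hence directed.
\item Since $\mathcal{O}$ is inaccessible by directed joins and contains this supremum, there is $\eta\in(-\infty,\nu)\cap\mathcal{O}$, so $\eta<\nu$.
\end{enumerate}

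\textbf{Showing $[\nu]_\eta\subseteq\mathcal{O}$.} By Proposition~\ref{carc}, since $\eta<\nu$, the class $[\nu]_\eta$ equals the tangent class $[\nu]_{\mathrm{tan}}$ at $\eta$. Every $\omega$ in it therefore satisfies $\eta<\omega$. Because $\eta\in\mathcal{O}$ and $\mathcal{O}$ is an upper set, $\omega\in\mathcal{O}$. Thus $\nu\in[\nu]_\eta\subseteq\mathcal{O}$.

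\textbf{Where the difficulty lies.} The main point to check is that $(-\infty,\nu)$ is nonempty for every $\nu$. This is needed both to apply the directed-join property and to avoid the degenerate case.

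To obtain it, I would pick any $\mu\neq\nu$ (the tree has more than one point, e.g.\ via augmentations) and argue by cases:
\begin{enumerate}
\item If $\mu<\nu$, we are done.
\item If $\mu$ and $\nu$ are incomparable, then $\mu\wedge\nu<\nu$, using Proposition~\ref{LemExisteJoint}.
\item If $\mu>\nu$, first take a point in $(-\infty,\mu)$ that is incomparable to $\nu$, or lower, and then take the infimum.
\end{enumerate}

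If $\nu$ really is a global minimum, the trivial case above handles it, since then $\mathcal{O}=\mathcal{T}$. In the end the argument only uses $\nu\in\mathcal{O}$, Lemma~\ref{lema_sup}(b) and Proposition~\ref{carc}, so the proof should be short.
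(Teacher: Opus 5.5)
Your proof is correct and follows the paper's argument. You use Lemma~\ref{lema_sup}(b) and inaccessibility by directed joins to obtain $\eta\in\mathcal{O}\cap(-\infty,\nu)$. You then note that every element of $[\nu]_\eta$ lies above $\eta$, so the upper-set property gives $[\nu]_\eta\subseteq\mathcal{O}$; you cite Proposition~\ref{carc} for this, while the paper reproves it directly from the total order on $(-\infty,\nu]$.

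Your extra attention to $(-\infty,\nu)\neq\emptyset$, which the paper leaves implicit, is sound but can be tightened. If $\nu$ is not the minimum, choose $\mu$ with $\nu\not\leq\mu$; then either $\mu<\nu$ or $\mu\wedge\nu<\nu$, so your loosely argued third case ($\mu>\nu$) is never needed.
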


\begin{proof}
For each valuation $\nu \in \mathcal{O}$, we will show that there exists $\mu \in \mathcal{T}$ such that $[\nu]_{\mu} \subseteq \mathcal{O}$.

Consider the set $(-\infty, \nu)$. Then, by Lemma \ref{lema_sup}, $\sup (-\infty, \nu) = \nu \in \mathcal{O}$ and since $\mathcal{O}$ is open in the Scott topology, we have $\mathcal{O} \cap (-\infty, \nu) \neq \emptyset$.

Take any $\mu \in \mathcal{O} \cap (-\infty, \nu)$. For each $\eta \in [\nu]_{\mu}$, we will show that $\eta \in \mathcal{O}$. Suppose, by contradiction, that $\mu \nleq \eta$. Since $\mu \leq \nu$ and $(-\infty, \nu]$ is totally ordered, we have $\mu \leq \nu \wedge \eta$ or $\nu \wedge \eta \leq \mu$.

The first case cannot occur, because $\mu \nleq \eta$ and $\nu \wedge \eta \leq \eta$. Consequently, $\nu \wedge \eta \leq \mu \leq \nu$ and thus $\mu \in [\nu, \eta]$. This means that $\eta \notin [\nu]_{\mu}$, a contradiction. Therefore, $\mu \leq \eta$. Since $\mu \in \mathcal{O}$ and $\mathcal{O}$ is an upper set, we obtain that $\eta \in \mathcal{O}$. Hence, $[\nu]_{\mu} \subseteq \mathcal{O}$, which completes the proof.
\end{proof}

\begin{Prop}\label{propScottCoarser}
Assume that $\Lambda$ is densely ordered. The Scott topology $\mathfrak{S}$ on $\mathcal{T}$ is strictly coarser than the weak tree topology $\mathfrak{W}$.
\end{Prop}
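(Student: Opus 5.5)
The statement has two halves: $\mathfrak{S}\subseteq\mathfrak{W}$, and the inclusion is strict. The first half is exactly Proposition~\ref{prop:Scott_mais_fina}, so all the work is in exhibiting a set that is open in the weak tree topology but not Scott open in $(\mathcal{T},\leq)$.

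The natural candidates are sub-basic sets $[\nu]_\mu$ that fail to be upper sets for $\leq$. I will take $\mu\in\mathcal{T}$ non-maximal and choose $\nu$ with $\mu<\nu$. Such $\nu$ exists: by Theorem~\ref{MLteo2.3} the set $\operatorname{KP}(\mu)$ is nonempty, so we may take $\phi\in\operatorname{KP}(\mu)$ and $\nu=[\mu;\phi,\infty]$. For instance, $\mu$ can be the Gauss valuation $\mu_0$ (with $\phi=x$, $\gamma=0$), which is residue-transcendental by Lemma~\ref{lemqnSobrecTransc} and hence non-maximal.

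By Proposition~\ref{carc} we then have $[\nu]_\mu=[\nu]_{\mathrm{tan}}$, and every element of this class is strictly larger than $\mu$. Consequently $\mu\notin[\nu]_\mu$.

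To conclude, I will find some $\rho<\mu$ in $\mathcal{T}$. Then $\rho\notin[\nu]_\mu$ is automatic, since $\rho\not>\mu$. The failure of the upper-set property must come from elsewhere: I need a point of $[\nu]_\mu$ lying below a point outside $[\nu]_\mu$. I expect this to be the main obstacle, because $[\nu]_\mu$ is itself an upper set for $\leq$ when $\mu<\nu$.

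It is therefore better to use the other kind of class, namely $\mathcal{B}_{\nleq}(\mu)=\{\rho\mid \mu\nleq\rho\}$. It is open in $\mathfrak{W}$: pick any $\eta$ with $\mu\not\leq\eta$, so that $\mathcal{B}_{\nleq}(\mu)=[\eta]_\mu$. It is not an upper set once there is some $\rho<\mu$, because $\rho\in\mathcal{B}_{\nleq}(\mu)$, $\rho\leq\mu$, and $\mu\notin\mathcal{B}_{\nleq}(\mu)$.

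This reduces everything to producing $\mu\in\mathcal{T}$ that has a strictly smaller element. I take $\rho=\mu_0$, the Gauss valuation, and $\mu=[\mu_0;x,\gamma]$ with $\gamma\in\Lambda$, $\gamma>0$ (for example $\gamma=\infty$ or any positive element). The augmentation satisfies $\mu_0<\mu$, as used in the proof of Lemma~\ref{lema_sup}.

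Summing up: $\mathcal{B}_{\nleq}(\mu)\in\mathfrak{W}$ but it is not an upper set in $(\mathcal{T},\leq)$, hence not Scott open. Together with Proposition~\ref{prop:Scott_mais_fina}, this gives $\mathfrak{S}\subsetneq\mathfrak{W}$.
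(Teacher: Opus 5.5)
Your argument is correct and is essentially the paper's. By Lemma~\ref{ConjB}, your witness $\mathcal{B}_{\nleq}(\mu)$ is exactly the set $[\rho]_\mu$ with $\rho<\mu$ that the paper uses, and the failure of the upper-set property ($\rho\in[\rho]_\mu$, $\rho<\mu$, $\mu\notin[\rho]_\mu$) is argued in the same way, with Proposition~\ref{prop:Scott_mais_fina} supplying the inclusion. The only additions are your explicit comparable pair $\mu_0<[\mu_0;x,\infty]$, which the paper takes for granted, and the abandoned first attempt with $\mu<\nu$, which you should simply drop.
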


\begin{proof}
Proposition \ref{prop:Scott_mais_fina} shows that the Scott topology $\mathfrak{S}$ is coarser than the weak tree topology $\mathfrak{W}$. To show that the inclusion is strict, take $\mu, \nu \in \mathcal{T}$ two valuations such that $\nu<\mu$. Consider the open set $[\nu]_{\mu}$ in the weak tree topology. Since $\nu \in [\nu]_{\mu}$ and $\nu<\mu$, if $[\nu]_{\mu}$ were an upper set, then $\mu$ should belong to $[\nu]_{\mu}$. However, $\mu \notin [\nu]_{\mu}$. Therefore, $[\nu]_{\mu}$ is not an upper set and consequently not a Scott open set. This shows that the Scott topology is strictly coarser than the weak tree topology.
\end{proof}


\begin{Teo}\label{teoWeakTreeTopGenByScott}
Assume that $\Lambda$ is densely ordered. The weak tree topology coincides with the topology generated by $\bigcup_{\mu\in\mathcal{T}}\mathfrak{S}_{\mu}$.
\end{Teo}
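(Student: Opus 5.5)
Let $\mathfrak{G}$ denote the topology generated by $\bigcup_{\mu\in\mathcal{T}}\mathfrak{S}_{\mu}$. I will prove the two inclusions $\mathfrak{W}\subseteq\mathfrak{G}$ and $\mathfrak{G}\subseteq\mathfrak{W}$ separately.

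The first inclusion is immediate from what has been done. The weak tree topology is generated by the sets $[\nu]_\mu$ with $\mu\in\mathcal T$ and $\nu\neq\mu$. By Corollary \ref{subbasico_contido_scott}, each such $[\nu]_\mu$ lies in $\mathfrak{S}_\mu\subseteq\mathfrak{G}$. Hence every subbasic open set of $\mathfrak W$ is in $\mathfrak G$, and so $\mathfrak W\subseteq\mathfrak G$.

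For the reverse inclusion it suffices to show that every $\mathcal O\in\mathfrak S_\mu$ is open in $\mathfrak W$, for each fixed $\mu$. I will redo Proposition \ref{prop:Scott_mais_fina} in the rooted tree $(\mathcal T,\leq_\mu)$; this works because that proof uses only the tree axioms and Lemma \ref{lema_sup}. Two facts are needed.

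\begin{enumerate}
\item[(i)] The intervals $\{\eta\mid \eta\leq_\mu\rho\}=[\mu,\rho]$ are totally ordered by $\leq_\mu$. This holds because $[\mu,\rho]$ is the union of two segments of the original tree glued at $\mu\wedge\rho$.
\item[(ii)] For $\nu\neq\mu$, the supremum in $(\mathcal T,\leq_\mu)$ of $[\mu,\nu)$ (the elements strictly $\leq_\mu$-below $\nu$) equals $\nu$.
\end{enumerate}

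For (ii), note that any $\leq_\mu$-upper bound $\omega$ of $[\mu,\nu)$ that is not $\geq_\mu\nu$ would make the interval $(\text{branch point},\nu)$ empty, and likewise a strict $\leq_\mu$-upper bound $\eta<_\mu\nu$ would make $(\eta,\nu)$ empty. Both contradict Lemma \ref{lema_sup}(a), since intervals are the same for $\leq$ and $\leq_\mu$. The only care needed is to define the branch point: it is the meet of $\nu$ and $\omega$ in $\leq_\mu$, namely the $\leq_\mu$-maximum of $[\mu,\nu]\cap[\mu,\omega]$. I will show it exists using Lemma \ref{lema1} and the description of intervals.

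Now take $\nu\in\mathcal O$. If $\nu=\mu$, then $\mathcal O$ is an upper set containing the root, so $\mathcal O=\mathcal T$, which is open. Otherwise, by (ii), the directed (indeed chain) set $[\mu,\nu)$ has supremum $\nu\in\mathcal O$, so there is some $\rho\in\mathcal O$ with $\rho<_\mu\nu$. I then claim $[\nu]_\rho\subseteq\mathcal O$. Take $\eta\in[\nu]_\rho$, so $\rho\notin[\nu,\eta]$. I must show $\rho\leq_\mu\eta$; then the upper-set property gives $\eta\in\mathcal O$. This mirrors the argument of Proposition \ref{prop:Scott_mais_fina}. In the rooted order $\leq_\mu$, the element $\nu\wedge_\mu\eta$ and $\rho$ both lie in the chain $[\mu,\nu]$. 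If $\nu\wedge_\mu\eta\leq_\mu\rho$, then $\rho\in[\nu,\eta]$, a contradiction. Hence $\rho<_\mu\nu\wedge_\mu\eta\leq_\mu\eta$. Therefore $\mathcal O$ is a union of sets $[\nu]_\rho$, so $\mathcal O\in\mathfrak W$, and $\mathfrak G\subseteq\mathfrak W$.

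The main obstacle is the bookkeeping of translating statements about $\leq$, meets and intervals into the rerooted order $\leq_\mu$. This includes existence of meets in $\leq_\mu$, the identity $[\nu,\eta]=[\nu\wedge_\mu\eta,\nu]_\mu\cup[\nu\wedge_\mu\eta,\eta]_\mu$, and fact (ii). I would handle these by case analysis on whether $\mu\leq\nu$, $\mu\leq\eta$, reducing each to Lemma \ref{lema1}, Lemma \ref{ConjB} and Lemma \ref{lema_sup}.
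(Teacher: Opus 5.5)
Your proof is correct and takes the same route as the paper: one inclusion comes from Corollary~\ref{subbasico_contido_scott}, and the other from the argument of Proposition~\ref{prop:Scott_mais_fina}. The paper cites that proposition directly for $\mathfrak{S}_\mu$, although it is only proved for the original order $\leq$; your transfer of the argument to the rerooted order $\leq_\mu$ (facts (i) and (ii), the meet $\wedge_\mu$, and density entering through Lemma~\ref{lema_sup}(a)) supplies a step the paper leaves implicit.
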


\begin{proof}
By Corollary~\ref{subbasico_contido_scott}, each $[\nu]_{\mu}$ belongs to $\mathfrak{S}_{\mu}$. Conversely, by Proposition \ref{prop:Scott_mais_fina}, every $\mathfrak{S}_{\mu}$-open set is open in the weak tree topology.
\end{proof}

\section{The valuative tree $\mathcal{T}_{\rm sme}$ }\label{SecTreeSME}

\subsection{Construction of $\R^{\mathbb{I}}$, $\R_{\rm sme}$ and $\Gamma_{\rm sme}$. }

In \cite{Nart_1}, it is presented a universal ordered abelian group $\R^{\mathbb{I}}_{\rm lex}$ having the property that each class of valuations whose
restriction to $K$ is equivalent to $v$ contains a valuation $\nu:K[x]\longrightarrow \R^{\mathbb{I}}_{\rm lex}$. We will give a sketch of its construction.

Given $\gamma\in \Gamma$, the \textit{principal convex subgroup} of $\Gamma$ generated by $\gamma$ is the  intersection of all convex subgroups of $\Gamma$ containing $\gamma$. 
Let
$$\mathcal{I}:= {\rm Prin}(\Gamma) $$
be the totally ordered set of non-zero principal convex subgroups of $\Gamma$, ordered by decreasing inclusion.  
We take the Hahn product, indexed by $\mathcal I$, defined as
$$\R_{\text{lex}}^\mathcal{I}:=\{ y=(y_i)_{i\in \mathcal I}\mid \supp(y) \text{ is well-ordered}\} \subset \R^\mathcal{I} $$
where $\supp(y) = \{ i\in \mathcal{I}\mid y_i\neq 0  \}$. Let ${\rm Init}(\mathcal{I})$ be the set of initial segments of $\mathcal{I}$. For each $S\in {\rm Init}(\mathcal{I})$, we consider a formal symbol $i_S$ and the ordered set
$$\mathcal{I}_S:= \mathcal{I}\sqcup\{i_S\}, $$
with the order defined by $i<i_S$ for all $i\in S$ and $i_S<j$ for all $j\in \mathcal{I}\setminus S$. We define
$$\mathbb{I}:= \mathcal{I}\sqcup \{ i_S\mid S\in {\rm Init}(\mathcal{I})\}. $$
The order in $\mathbb{I}$ is such that $\mathcal{I}_S \hookrightarrow \mathbb{I}$ preserves the order for all $S\in  {\rm Init}(\mathcal{I})$ and, for every $S, T\in {\rm Init}(\mathcal{I})$, we have $i_S<i_T$ if and only if $S\subsetneq T$. As above, we consider the Hahn product $$\R_{\text{lex}}^{\mathbb{I}}\subset \R^{\mathbb{I}}. $$ For all $S\in  {\rm Init}(\mathcal{I})$, since $\mathcal{I}\subset \mathcal{I}_S \subset \mathbb{I}$,  we have the embeddings
$$ \R_{\text{lex}}^\mathcal{I} \hookrightarrow \R_{\text{lex}}^{\mathcal{I}_S}\hookrightarrow \R_{\text{lex}}^{\mathbb{I}}$$
Also, it is shown in \cite{Nart_1} that we have an embedding  $$ 
\Gamma \hookrightarrow \Gamma_\Q \hookrightarrow \R_{\text{lex}}^\mathcal{I} \hookrightarrow \R_{\text{lex}}^{\mathbb{I}}.  $$
\begin{Prop}\cite[Proposition 6.2]{Nart_1}
	Let $\mu$ be a valuation on $K[x]$ whose restriction to $K$ is equivalent to $v$. Then, there exists an embedding $j:\Gamma_\mu \hookrightarrow \R_{\rm{lex}}^{\mathbb{I}}$ satisfying the following properties:
	\begin{description}
		\item[(i)] the following diagram commutes:

		\begin{center}
			\begin{tikzcd}[row sep=large, column sep = large]
				K[x] \arrow[r, rightarrow, "j\circ \mu"]{}{}  & (\mathbb{R}_{\rm lex }^{\mathbb{I}})_\infty \\
				K \arrow [u, hookrightarrow ]{}{} \arrow[r , "v", labels=below]{}{} & \Gamma_\infty \arrow [u, hookrightarrow ]{}{}
			\end{tikzcd}
		\end{center}
		
		\item[(ii)] There exists $S\in  {\rm Init}(\mathcal{I})$ such that $j(\Gamma_\mu)\subset \R_{\rm{lex}}^{\mathcal{I}_S}$. 
	\end{description}
\end{Prop}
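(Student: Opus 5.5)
The plan is to split according to the rational rank of $\Gamma_\mu/\Gamma_v$. By Abhyankar's inequality (Remark \ref{desigualdadeAB}) this rank is at most $1$, so either $\Gamma_\mu/\Gamma_v$ is torsion, or $\mu$ is value-transcendental. In the second case $\Gamma_\mu\otimes\Q=\Gamma_\Q\oplus\Q\gamma$ for a single element $\gamma=\mu(f)$.

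\textbf{Torsion case.} If $\Gamma_\mu/\Gamma_v$ is torsion (this covers the residue-transcendental, valuation-algebraic and nontrivial-support cases, by Remark \ref{obs2}), then $\Gamma_\mu$ sits inside the divisible hull $\Gamma_\Q$. Every order embedding $\Gamma\hookrightarrow D$ into a divisible ordered group extends uniquely to $\Gamma_\Q$. Composing with the given embedding $\Gamma_\Q\hookrightarrow\R^{\mathcal I}_{\rm lex}\subset\R^{\mathcal I_S}_{\rm lex}$ therefore produces $j$. Condition (i) holds by construction, and (ii) holds for any $S$, for instance $S=\emptyset$. After replacing $\mu$ by $j\circ\mu$, we get a valuation in the same class that extends $v$ exactly.

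\textbf{Value-transcendental case.} Put $\Delta=\Gamma_\Q\oplus\Q\gamma$, ordered via $\Gamma_\mu$. The goal is an order embedding $\Delta\hookrightarrow\R^{\mathbb I}_{\rm lex}$ that extends the fixed embedding of $\Gamma_\Q$ and lands in some $\R^{\mathcal I_S}_{\rm lex}$. This is done in three steps.
\begin{enumerate}
\item Compare the principal convex subgroups of $\Delta$ with those of $\Gamma$. Intersecting with $\Gamma_\Q$ identifies ${\rm Prin}(\Gamma)$ with a subset of ${\rm Prin}(\Delta)$. Since $\Delta/\Gamma_\Q$ has rank one, at most one new archimedean class can appear. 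That class, if it exists, is the class of $\gamma-\delta$ for suitable $\delta\in\Gamma_\Q$. It determines a cut in $\mathcal I$, namely the initial segment $S$ of the classes lying above it. Hence ${\rm Prin}(\Delta)$ embeds in $\mathcal I_S$, with $i_S$ playing the role of the new class.
\item Construct the image $j(\gamma)$. Use transfinite approximation: pick $\delta_\alpha\in\Gamma_\Q$ so that the archimedean class of $\gamma-\delta_\alpha$ strictly increases. The images of the $\delta_\alpha$ in the Hahn product agree on longer and longer initial segments of the index set, so together they define an element $y$ with well-ordered support. Two outcomes are possible.
 \begin{itemize}
 \item The process stops at a class $\gamma-\delta$ that is genuinely new. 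Then set $j(\gamma)=j(\delta)+e_{i_S}$, where $e_{i_S}$ is the unit vector at the index $i_S$ (up to sign).
 \item The process stops at a class already present in $\mathcal I$. Then $\gamma-\delta$ is comparable to some element of $\Gamma_\Q$ with a real ratio, and we add that real coefficient at the corresponding index.
 \item The process never stops. Then take $y$ itself, possibly adjusted by an $e_{i_S}$-term, where $S$ is the initial segment exhausted by the process.
 \end{itemize}
\item Extend $\Q$-linearly, and check that $j$ is order preserving. This is a comparison of leading terms.
\end{enumerate}

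The main obstacle is Step 2, specifically the compatibility requirement. Hahn embeddings are not unique, so one cannot simply invoke Hahn's theorem for $\Delta$ and hope that it restricts to the prescribed embedding of $\Gamma_\Q$. The approximation argument is there to force this compatibility. The delicate point is verifying that the limit element has well-ordered support, and that $\gamma$ occupies precisely the cut of $\Gamma_\Q$ that the limit element occupies in $\R^{\mathcal I_S}_{\rm lex}$. I would handle both points by comparing leading terms, using the fact that the order on $\R^{\mathbb I}_{\rm lex}$ is lexicographic.
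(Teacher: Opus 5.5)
The paper does not prove this statement; it is quoted from \cite{Nart_1}, where it belongs to the theory of small extensions and quasi-cuts summarized in Section~\ref{SecTreeSME}. In that framework the value-transcendental case amounts to showing that every cut of $\Gamma_\Q$ is realized by an element of some $\R^{\mathcal{I}_S}_{\mathrm{lex}}$. This is because $\Gamma_\Q+\Q\gamma$ is determined, as an ordered group over $\Gamma_\Q$, by the cut of $\gamma$; this is the mechanism behind \cite[Lemma 6.6]{Nart_1} and the isomorphism $\Gamma_{\rm sme}\cong{\rm Qcuts}(\Gamma_\Q)$. Your proposal proves this realizability directly and correctly. It uses a Kaplansky-style best approximation of $\gamma$ by elements of $\Gamma_\Q$, and the maximal completeness of the Hahn product supplies the limit $y$. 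Compared with invoking a relative form of Hahn's embedding theorem, your route is longer but self-contained. It also shows exactly when the extra index is needed: only in your outcome (a), where some $\gamma-\delta$ lies in an archimedean class not represented in $\Gamma_\Q$. In (b) and (c) one gets $j(\gamma)\in\R^{\mathcal{I}}_{\mathrm{lex}}$.

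Some points should be made explicit.

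Every leading-term comparison you make presupposes that the fixed embedding $\Gamma_\Q\hookrightarrow\R^{\mathcal{I}}_{\mathrm{lex}}$ is a Hahn embedding. That is, the least index in the support of the image of $a\neq 0$ must be the principal convex subgroup generated by $a$; an arbitrary order embedding would not do.

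Step~3 is cleanest through the following criterion: the $\Q$-linear extension is an order embedding if and only if $j(\gamma)-j(\delta)$ has the sign of $\gamma-\delta$ for every $\delta\in\Gamma_\Q$. To see this, write $a+q\gamma=q(\gamma+a/q)$. Step~1 is then only needed to name $S$ in outcome (a), where the new class is terminal and $j(\delta)\pm e_{i_S}$ works.

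In (b), ``the process stops'' must mean that the real coefficient $r$ lies outside the image in $\R$ of the archimedean component of $\Gamma_\Q$ at $i$, which is a $\Q$-subspace. This is what keeps the leading coefficient of $j(\gamma)-j(\delta')$ nonzero when $\delta-\delta'$ has class $i$.

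Outcome (c) should read ``the process reaches a limit stage at which $(\delta_\alpha)$ has no pseudo-limit in $\Gamma_\Q$''. The process always terminates, since the indices increase strictly inside the set $\mathcal{I}$. In case (c), $y$ itself works, with no $e_{i_S}$-adjustment. Indeed, for each $\delta\in\Gamma_\Q$ the class of $\gamma-\delta$ precedes $i_\beta$ for all large $\beta$. Hence $y-j(\delta)$ has the same leading term as $j(\delta_\beta)-j(\delta)$, whose sign is that of $\gamma-\delta$.
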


Hence, for each class of valuations in $ \mathbb{V}$ we can take a representative $\mu$ such that $\mu|_K \sim v$ and $\Gamma_\mu\subset  \R_{\text{lex}}^{\mathcal{I}_S}$ for some $S\in  {\rm Init}(\mathcal{I})$ (\cite{Nart_1}, p.35). We will then assume $\Gamma_v\subset \R_{\rm{lex}}^{\mathcal{I}}$. Consider the tree
$$\mathcal{T}(v,{\R^{\mathbb{I}}}) = \{\nu: K[x] \rightarrow (\R_{\rm{lex}}^{\mathbb{I}})_\infty \mid \nu \text{ is a valuation extending } v  \}. $$

We define\footnote{Here, ``{\rm sme}'' makes reference to the fact that  $\Gamma\hookrightarrow \Gamma_\mu$ is a \textbf{small extension}, meaning that if $\Gamma'\subset \Gamma_\mu$ is the relative divisible closure of $\Gamma$ in $\Gamma_\mu$, then $\Gamma_\mu/\Gamma'$ is a cyclic group (see \cite[Section 6]{Nart_1} for details) }
$$\R_{\text{sme}}:=  \bigcup_{S\in  {\rm Init}(\mathcal{I}) } \R_{\text{lex}}^{\mathcal{I}_S} \subset \R_{\text{lex}}^{\mathbb{I}}.  $$
Then every class of valuations in $\mathbb{V}$ admits a representative $\mu$ such that $\Gamma_\mu\subset \R_{\text{sme}}$.


For each $\alpha\in \R_{\text{sme}}$, let $\langle \Gamma, \alpha \rangle$ be the subgroup generated by $\Gamma$ and $\alpha$. We define the following equivalence relation on $\R_{\text{sme}}$: for $\alpha, \beta\in \R_{\text{sme}}$, 
$ \alpha \sim_{\rm sme} \beta$ if and only if there exists an isomorphism of ordered groups $ \langle \Gamma, \alpha \rangle \to \langle \Gamma, \beta \rangle   $ which sends $\alpha$ to $\beta$ and acts as the identity on $\Gamma$. Take any subset $\Gamma_{\rm sme}\subset \R_{\text{sme}}$ of representatives of the quotient set $\R_{\text{sme}}/\sim_{\rm sme}$. According to \cite{Nart_1}, we have
$$\Gamma \subset \Gamma_\Q \subset \Gamma_{\text{sme}} \subset \R_{\text{sme}}. $$

\subsubsection{Quasi-cuts}
A canonical description for $\Gamma_{\text{sme}}$ is the set of quasi-cuts on $\Gamma_\Q$ (see \cite{Nart_1}).

For $S,T\subset \Gamma_\Q$, we will write $S\leq T$ when   $s\leq t$ for every $s\in S$ and every $t\in T$. A subset $S\subset \Gamma_\Q$ is said to be an \textbf{initial segment} of $\Gamma_\Q$ if  it satisfies the following property: if $\gamma\in \Gamma_\Q$ is such that $\gamma\leq s$ for some $s\in S$, then $\gamma\in S$.

\begin{Def}A \textbf{quasi-cut} in $\Gamma_\Q$ is a pair $\delta = (\delta^L,\delta^R)$ of subsets of $\Gamma_\Q$ such that
	$$\delta^L\leq \delta^R \text{ and } \delta^L\cup \delta^R = \Gamma_\Q. $$
	
\end{Def}

\begin{Obs}We collect bellow some properties of quasi-cuts in $\Gamma_\Q$. 
	
	\begin{itemize}
		\item The subset $\delta^L$ is an initial segment of $\Gamma_\Q$ and $\delta^L\cap \delta^R$ has at most one element. 
		
		\item We denote by ${\rm Qcuts}(\Gamma_\Q)$ the set of all quasi-cuts in $\Gamma_\Q$. We can define a total order in ${\rm Qcuts}(\Gamma_\Q)$ by setting
		$$\delta = (\delta^L,\delta^R)\leq \gamma = (\gamma^L,\gamma^R) \Llr \delta^L\subseteq \gamma^L \text{ and } \delta^R\supseteq \gamma^R. $$
		
		\item There is an embedding $\Gamma_\Q\hookrightarrow {\rm Qcuts}(\Gamma_\Q) $ that preserves the order, mapping $\alpha\in \Gamma_\Q$ to  $(\alpha^L,\alpha^R)$,
		where
		$$\alpha^L = \{ \sigma\in \Gamma_{\Q} \mid \sigma\leq \alpha\} \text{ and } \alpha^R = \{ \sigma\in \Gamma_{\Q} \mid \sigma\geq \alpha \}. $$
		This is called the \textbf{principal quasi-cut} associated to $\alpha$. We use this identification and assume that $\Gamma_\Q \subset {\rm Qcuts}(\Gamma_\Q)$.
		
		\item If $\delta = (\delta^L,\delta^R)$ is such that $\delta^L\cap \delta^R = \emptyset$, then $\delta$ is called a \textbf{cut} in $\Gamma_\Q$. Calling ${\rm Cuts}(\Gamma_\Q)$ the set of all cuts in $\Gamma_\Q$, we have 
		\[
		{\rm Qcuts}(\Gamma_\Q) = \Gamma_\Q \sqcup {\rm Cuts}(\Gamma_\Q).
		\]
		
		
	\item Let $\delta=(\delta^L,\delta^R)$ be a cut in $\Gamma_\Q$. Then $\delta  =\sup \delta^L=\inf \delta^R$.	Indeed, we have $\delta>\alpha=(\alpha^L,\alpha^R)$ for every $\alpha \in  \delta^L$. If $\gamma$ is a quasi-cut such that $\gamma<\delta$, then we can suppose $\gamma^L\subset \delta^L$. Hence, there exists $\alpha\in \delta^L\setminus \gamma^L$ and then  $\alpha>\gamma^L$. That is, $\alpha=(\alpha^L,\alpha^R)>\gamma$ and then $\gamma\neq \sup \delta^L.$ The reasoning is analogous for $\inf\delta^R.$ 

	 \end{itemize}
\end{Obs}


For each $\alpha\in \R_{\text{sme}}$, let $\delta_\alpha$ be the quasi-cut defined as
\[
\delta_\alpha^L = \{ \gamma\in \Gamma_\Q\mid \gamma\leq \alpha  \}\mbox{ and }\delta_\alpha^R = \{ \gamma\in \Gamma_\Q\mid \gamma\geq \alpha  \}.
\]
For all $\alpha,\beta\in \R_{\text{sme}}$, we have $\alpha \sim_{\rm sme } \beta$ if and only if $\delta_\alpha= \delta_\beta$ \cite[Lemma 6.6]{Nart_1}. The map $\alpha	\longmapsto \delta_\alpha$ is an isomorphism of ordered sets between  $\Gamma_{\text{sme}}$ and $\rm Qcuts(\Gamma_\Q)$ (see Section 6.4 of \cite{Nart_1}). Therefore, the set $\Gamma_{\text{sme}}$, when equipped with the order topology, is complete (i.e. every subset of $\Gamma_{\text{sme}}$ has a supremum and an infimum)  and $\Gamma_\Q$ is dense in $\Gamma_{\rm sme}$.

\subsubsection{The subtree $\mathcal{T}_{\rm sme}$}
Consider the subtree
$$\mathcal{T}_{\R_{\text{sme}}}:= \{\nu\in \mathcal{T}(v,{\R^{\mathbb{I}}})\mid \Gamma_\nu \subset \R_{\text{lex}}^{\mathcal{I}_S} \text{ for some } S\in {\rm Init}(\mathcal{I})\}.$$ 

For a valuation $\nu$, let $\phi$  be a key polynomial of minimal degree for $\nu$.  Define $\deg(\nu):=\deg(\phi)$. By \cite[Theorem 3.9]{NartKeyPolyValuedFields},  ${\rm sv}(\nu):=\nu(\phi)$ is well defined.   Consider the subgroup $\Gamma_{\nu}^0=\{\nu(a)\mid 0\leq \deg(a) <\deg(\nu)\}\subset \Gamma_\Q\subset \Gamma_\nu$ \cite[Lemma 2.11]{NartKeyPolyValuedFields}. We have $\Gamma_\nu = \langle \Gamma_\nu^0, {\rm sv}(\nu) \rangle.$

\begin{Prop}\label{PropNartValEquivsme}\cite[Proposition 6.3]{Nart_1}
	Let $\mu, \nu\in \mathcal{T}_{\R_{{\rm sme}}}$. Then $\mu\sim \nu$ if and only if the following three conditions hold:
	\begin{description}
		\item[(a)] The valuations $\mu$ and $\nu$ admit a common  key polynomial of minimal degree.
		
		\item[(b)] For all $a\in K[x]$ such that $\deg(a)<\deg(\mu)$, we have $\mu(a)=\nu(a)$.
		
		\item[(c)] ${\rm sv}(\mu)\sim_{\rm sme } {\rm sv}(\nu)$. 
	\end{description} 
\end{Prop}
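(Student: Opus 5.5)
The plan is to prove the two implications separately. The tools are: the order-theoretic meaning of equivalence, the fact that $\Gamma_\mu^0\subset\Gamma_\Q$, the decomposition $\Gamma_\mu=\langle\Gamma_\mu^0,{\rm sv}(\mu)\rangle$, and the description of $\sim_{\rm sme}$ through quasi-cuts, namely $\alpha\sim_{\rm sme}\beta$ iff $\delta_\alpha=\delta_\beta$ \cite[Lemma 6.6]{Nart_1}.

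\emph{($\Rightarrow$).} Suppose $\mu\sim\nu$, and let $\Phi:\Gamma_\mu\to\Gamma_\nu$ be an order-preserving isomorphism with $\nu=\Phi\circ\mu$.

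Since both valuations restrict to $v$ on $K$, $\Phi$ is the identity on $\Gamma$. For $\alpha\in\Gamma_\Q\cap\Gamma_\mu$ with $n\alpha\in\Gamma$ we get $n\Phi(\alpha)=n\alpha$, hence $\Phi(\alpha)=\alpha$ because $\Lambda$ is torsion free. So $\Phi$ fixes $\Gamma_\Q\cap\Gamma_\mu$ pointwise.

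Next, the notions defining key polynomials can all be phrased through strict inequalities between values:
\begin{itemize}
\item ${\rm in}_\mu f={\rm in}_\mu g$ iff $\mu(f-g)>\mu(f)$;
\item ${\rm in}_\mu g$ divides ${\rm in}_\mu f$ iff there is $h$ with $\mu(f-gh)>\mu(f)$;
\item homogeneous primality is expressed through such divisibilities.
\end{itemize}
These conditions are therefore the same for $\mu$ and $\nu$, so ${\rm KP}(\mu)={\rm KP}(\nu)$, which gives (a) and $\deg\mu=\deg\nu$.

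For $\deg a<\deg\mu$ we have $\mu(a)\in\Gamma_\mu^0\subset\Gamma_\Q$, so $\nu(a)=\Phi(\mu(a))=\mu(a)$, which is (b).

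Finally, $\Phi$ restricts to an isomorphism $\langle\Gamma,{\rm sv}(\mu)\rangle\to\langle\Gamma,{\rm sv}(\nu)\rangle$ that is the identity on $\Gamma$ and sends ${\rm sv}(\mu)=\mu(\phi)$ to $\nu(\phi)={\rm sv}(\nu)$. This is exactly (c).

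\emph{($\Leftarrow$).} Let $\phi$ be a common key polynomial of minimal degree and write $s={\rm sv}(\mu)$, $t={\rm sv}(\nu)$. By (b), $\Gamma_\mu^0=\Gamma_\nu^0=:\Gamma^0\subset\Gamma_\Q$.

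The case $s=\infty$ (nontrivial support) should be handled first. There (c) forces $t=\infty$, and the argument below reduces to $\Phi={\rm id}$ on $\Gamma^0$.

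Otherwise I will define $\Phi:\Gamma_\mu=\Gamma^0+\Z s\to\Gamma_\nu=\Gamma^0+\Z t$ by
\[
\gamma+ms\longmapsto\gamma+mt\qquad(\gamma\in\Gamma^0,\ m\in\Z).
\]
Well-definedness and order preservation both reduce to one claim: for $\gamma\in\Gamma_\Q$ and $m\neq0$, the sign of $\gamma+ms$ equals the sign of $\gamma+mt$. This is the sign of $s-(-\gamma/m)$, respectively $t-(-\gamma/m)$, adjusted for the sign of $m$. Since $-\gamma/m\in\Gamma_\Q$ and $\delta_s=\delta_t$, the element $-\gamma/m$ lies on the same side of $s$ and of $t$, including the case of equality when $s\in\Gamma_\Q$.

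With $\Phi$ in hand, I will use that $\phi$ being a key polynomial of minimal degree gives $\mu=\mu_\phi$ and $\nu=\nu_\phi$, the truncations at $\phi$ (\cite{NartKeyPolyValuedFields}). Writing $f=\sum_j a_j\phi^j$ in its $\phi$-expansion, we get
\[
\nu(f)=\min_j\{\nu(a_j)+jt\}=\min_j\Phi(\mu(a_j)+js)=\Phi(\mu(f)),
\]
where the middle equality uses (b) and the last uses that $\Phi$ preserves order. Hence $\mu\sim\nu$.

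\emph{Main obstacle.} I expect the delicate points to be the sign comparison via quasi-cuts, in particular the boundary case $s\in\Gamma_\Q$, and justifying $\mu=\mu_\phi$ for a minimal-degree key polynomial. For the latter I would try the characterization that $\mu_\phi=\mu$ iff $\phi$ is a key polynomial of minimal degree, or else derive it directly from the definition of ${\rm sv}$ and \cite[Theorem 3.9]{NartKeyPolyValuedFields}.
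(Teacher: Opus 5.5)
The paper does not prove this statement. It quotes it from \cite[Proposition 6.3]{Nart_1}, so there is no in-paper argument to compare yours with.

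Judged on its own, your proof is correct. It uses only facts the paper records just before the statement: $\Gamma_\mu^0\subset\Gamma_\Q$, $\Gamma_\mu=\langle\Gamma_\mu^0,{\rm sv}(\mu)\rangle$, the independence of ${\rm sv}$ from the choice of $\phi$, and \cite[Lemma 6.6]{Nart_1}.

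The two points you flag as obstacles are harmless:
\begin{itemize}
\item The identity $\mu=\mu_\phi$ for a key polynomial $\phi$ of minimal degree is exactly what the paper uses in the proof of Corollary \ref{CorEquivValAreComparable}, citing \cite[Proposition 2.3]{NartJosneiItaliano}.
\item The sign comparison does not need quasi-cuts. Given $\gamma\in\Gamma_\Q$ and $m\in\Z$, choose $n\geq 1$ with $n\gamma\in\Gamma$. The order isomorphism $\langle\Gamma,s\rangle\to\langle\Gamma,t\rangle$ supplied by (c) sends $n\gamma+nms$ to $n\gamma+nmt$. Hence $\gamma+ms$ and $\gamma+mt$ have the same sign, and the boundary case $s\in\Gamma_\Q$ needs no separate treatment.
\end{itemize}

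In the forward direction, there is a shorter way to see ${\rm KP}(\mu)={\rm KP}(\nu)$. For every $\alpha\in\Gamma_\mu$ we have $\{f\mid \mu(f)\geq\alpha\}=\{f\mid \nu(f)\geq\Phi(\alpha)\}$, and the same holds with strict inequalities. So $\Phi$ induces a degree-twisted isomorphism of graded rings $\mathcal{G}_\mu\to\mathcal{G}_\nu$ with ${\rm in}_\mu f\mapsto{\rm in}_\nu f$. The defining properties of key polynomials then transfer at once, without rephrasing each one through strict inequalities.

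One caveat concerns the statement's scope rather than your reasoning. By Theorem \ref{MLteo2.3}, valuations with nontrivial support and valuation-algebraic ones have no Mac Lane--Vaquié key polynomials. So for them $\deg$ and ${\rm sv}$ only make sense under an extra convention, such as the one you adopt implicitly in the case $s=\infty$ by taking the generator of the support. This is immaterial, because all non-value-transcendental valuations take values in $\Gamma_\Q$, where equivalence already means equality (Remark \ref{equi_fecho}). The substantive case is the value-transcendental one, and there your argument applies as written.
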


\begin{Cor}\label{CorEquivValAreComparable}
	Let $\mu, \nu\in \mathcal{T}_{\R_{{\rm sme}}}$.	If $\mu\sim \nu$, then they are comparable. 
\end{Cor}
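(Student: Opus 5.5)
We use Proposition \ref{PropNartValEquivsme} and try to prove directly that $\mu\leq\nu$ or $\nu\leq\mu$. Assume $\mu\sim\nu$. By (a) there is a monic $\phi$ that is a key polynomial of minimal degree for both valuations, so $\deg(\mu)=\deg(\nu)=\deg(\phi)=:n$. By (b), $\mu(a)=\nu(a)$ for every $a\in K[x]$ with $\deg(a)<n$. Since ${\rm sv}(\mu)=\mu(\phi)$ and ${\rm sv}(\nu)=\nu(\phi)$ are elements of the totally ordered group $\R^{\mathbb{I}}_{\rm lex}$, we may assume, without loss of generality, that $\mu(\phi)\leq\nu(\phi)$. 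The goal is then to show $\mu\leq\nu$.

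The key structural fact needed is the following. If $\phi$ is a key polynomial of minimal degree for a valuation $\rho$, then $\rho$ coincides with its truncation $\rho_\phi$. Concretely, for $f=\sum_j f_j\phi^j$ written in its $\phi$-expansion, we want
\[
\rho(f)=\min_j\{\rho(f_j)+j\rho(\phi)\}.
\]
This is the standard property of key polynomials: ${\rm in}_\rho\phi$ generates a prime ideal and contains no initial terms of lower-degree polynomials. In the Mac Lane--Vaquié framework of \cite{NartKeyPolyValuedFields}, it implies that $\phi$-expansions are ``$\rho$-minimal'', giving $\rho(f)=\min_j\rho(f_j\phi^j)$. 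I would cite this from \cite{NartKeyPolyValuedFields} (or \cite{Nart_2}) rather than reprove it.

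Applying this to both $\mu$ and $\nu$, for every $f=\sum_j f_j\phi^j$ we get
\[
\mu(f)=\min_j\{\mu(f_j)+j\mu(\phi)\}\leq\min_j\{\nu(f_j)+j\nu(\phi)\}=\nu(f).
\]
The inequality holds because $\deg f_j<n$ gives $\mu(f_j)=\nu(f_j)$ by (b), and $\mu(\phi)\leq\nu(\phi)$. Hence $\mu\leq\nu$. Notice that condition (c) is not even needed for comparability; it only fixes the relation between the two values ${\rm sv}$.

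The main obstacle is justifying $\rho=\rho_\phi$ for a key polynomial of minimal degree, especially for valuations with nontrivial support or valuation-algebraic valuations. By Theorem \ref{MLteo2.3}, such valuations have no key polynomials. In that case, what $\deg(\nu)$ and ${\rm sv}(\nu)$ mean must come from \cite{NartKeyPolyValuedFields}, where the minimal-degree key polynomial is replaced by an \emph{abstract} key polynomial. For these valuations I would use the analogous minimality property of abstract key polynomials of minimal degree, namely that $\nu$ agrees with its truncation there. Alternatively, if the cited framework only treats valuation-transcendental $\mu,\nu$, I would argue separately that maximal valuations cannot be equivalent to one another unless they are equal. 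If neither works, a fallback is this: equivalent valuations satisfy $\mu(f)\leq\mu(g)\iff\nu(f)\leq\nu(g)$, and combining this with (b) and total order on $(-\infty,\mu\wedge\nu]$, one can show $\mu\wedge\nu\in\{\mu,\nu\}$.
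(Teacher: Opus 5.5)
Your proposal is correct and is essentially the paper's proof: take the common minimal-degree key polynomial $\phi$ from Proposition~\ref{PropNartValEquivsme}, use $\mu=\mu_\phi$ and $\nu=\nu_\phi$ (the paper cites \cite[Proposition 2.3]{NartJosneiItaliano}), and compare $\phi$-expansions using (b) together with $\mu(\phi)\leq\nu(\phi)$, without needing (c). The paper does not treat maximal valuations separately; your back-up argument for that case, that equivalent valuations with values in $\Gamma_\Q$ coincide (Remark~\ref{equi_fecho}), is sound and would cover it.
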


\begin{proof}
	By Proposition \ref{PropNartValEquivsme} above, we can find a common key polynomial $\phi$ for $\mu$ and $\nu$ of minimal degree.  By	\cite[Proposition 2.3]{NartJosneiItaliano}, $\mu=\mu_\phi$ and $\nu=\nu_\phi$. Suppose $\mu(\phi)\leq \nu(\phi)$.  Hence, for any $f = f_r\phi^r+\ldots+ f_0$, with $\deg(f_j)<\deg(\phi)$, we have $\mu(f_j)=\nu(f_j)$ for every $j$ and
	$$\mu(f) = \min_{0\leq j\leq r}\{\mu(f_j)+j\mu(\phi)\}\leq  \min_{0\leq j\leq r}\{\nu(f_j)+j\nu(\phi)\}=\nu(f). $$
\end{proof}

Consider now the subtree 
$$\mathcal{T}_{\rm sme}:= \{\nu\in \mathcal{T}_{\R_{ \rm sme}}\mid {\rm sv}(\nu)\in \Gamma_{\rm sme}\}. $$


\begin{Teo}
	\cite[Theorem 7.1]{Nart_1} The mapping $\mu\mapsto [\mu]$ induces a bijection between $\mathcal{T}_{\rm sme}$ and $\mathbb{V}$.
\end{Teo}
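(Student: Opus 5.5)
The plan is to check the two halves of the bijection separately. Well-definedness is immediate: every $\mu\in\mathcal{T}_{\rm sme}$ restricts to $v$ on $K$, so $[\mu]\in\mathbb{V}$. Injectivity will come from Proposition \ref{PropNartValEquivsme} together with the choice of $\Gamma_{\rm sme}$ as a set of representatives. Surjectivity will come from the representative theorem \cite[Proposition 6.2]{Nart_1}, followed by an order-isomorphism that moves ${\rm sv}$ into $\Gamma_{\rm sme}$.

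\emph{Injectivity.} Let $\mu,\nu\in\mathcal{T}_{\rm sme}$ with $\mu\sim\nu$.
\begin{enumerate}
\item[(1)] By Proposition \ref{PropNartValEquivsme}, they have a common key polynomial $\phi$ of minimal degree, they agree on all $a$ with $\deg a<\deg\phi$, and ${\rm sv}(\mu)\sim_{\rm sme}{\rm sv}(\nu)$.
\item[(2)] Both ${\rm sv}(\mu)$ and ${\rm sv}(\nu)$ lie in $\Gamma_{\rm sme}$, which contains exactly one representative of each $\sim_{\rm sme}$-class. Hence ${\rm sv}(\mu)={\rm sv}(\nu)$, that is, $\mu(\phi)=\nu(\phi)$.
\item[(3)] As in the proof of Corollary \ref{CorEquivValAreComparable}, \cite[Proposition 2.3]{NartJosneiItaliano} gives $\mu=\mu_\phi$ and $\nu=\nu_\phi$. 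For $f=\sum f_j\phi^j$ we then get
\[
\mu(f)=\min_j\{\mu(f_j)+j\mu(\phi)\}=\min_j\{\nu(f_j)+j\nu(\phi)\}=\nu(f),
\]
so $\mu=\nu$.
\end{enumerate}

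\emph{Surjectivity.} Take a class in $\mathbb{V}$.
\begin{enumerate}
\item[(1)] By \cite[Proposition 6.2]{Nart_1} and the discussion after it, the class has a representative $\mu$ extending $v$ with $\Gamma_\mu\subset\R^{\mathcal{I}_S}_{\rm lex}$ for some $S$. Thus $\mu\in\mathcal{T}_{\R_{\rm sme}}$.
\item[(2)] Put $\alpha={\rm sv}(\mu)$ and let $\beta\in\Gamma_{\rm sme}$ be the representative of its $\sim_{\rm sme}$-class. Let $\psi:\langle\Gamma,\alpha\rangle\to\langle\Gamma,\beta\rangle$ be an order-isomorphism with $\psi(\alpha)=\beta$ that is the identity on $\Gamma$.
\item[(3)] We have $\Gamma_\mu=\langle\Gamma^0_\mu,\alpha\rangle$ with $\Gamma^0_\mu\subset\Gamma_\Q$. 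We extend $\psi$ to an order-embedding $\Phi:\Gamma_\mu\to\R_{\rm sme}$ that is the identity on $\Gamma_\mu^0$ and sends $\alpha$ to $\beta$.
\item[(4)] Set $\mu'=\Phi\circ\mu$. This is a valuation equivalent to $\mu$ by condition i) of the definition of equivalence. It restricts to $v$ on $K$ because $\Phi$ fixes $\Gamma$. Its value group $\langle\Gamma^0_\mu,\beta\rangle$ lies in some $\R^{\mathcal{I}_{S'}}_{\rm lex}$, since $\Gamma_\Q\subset\R^{\mathcal{I}}_{\rm lex}$ and $\beta\in\R_{\rm sme}$.
\item[(5)] Equivalent valuations have the same key polynomials, because the graded-algebra data is preserved under an order-isomorphism of values. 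Hence ${\rm sv}(\mu')=\Phi(\alpha)=\beta\in\Gamma_{\rm sme}$, so $\mu'\in\mathcal{T}_{\rm sme}$ and $[\mu']$ is the given class.
\end{enumerate}

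The main obstacle is step (3) of surjectivity: producing $\Phi$. Since $\Gamma_\Q$ and the groups involved are torsion-free and $\psi$ fixes $\Gamma$, I would extend $\psi$ to divisible hulls by $\psi(\gamma/n)=\psi(\gamma)/n$. This extension is order-preserving and is the identity on $\Gamma_\Q\cap\langle\Gamma,\alpha\rangle_\Q$. Restricting it to $\langle\Gamma^0_\mu,\alpha\rangle$ gives $\Phi$, using that each element of $\Gamma^0_\mu+\Z\alpha$ has a positive multiple lying in $\langle\Gamma,\alpha\rangle$. One must also check that the image of this restriction lands inside $\R_{\rm sme}$ and not merely inside $\R^{\mathbb{I}}_{\rm lex}$; this uses that $\R^{\mathcal{I}_{S'}}_{\rm lex}$ is divisible.
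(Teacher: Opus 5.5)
The paper gives no proof of this statement; it is quoted from \cite[Theorem 7.1]{Nart_1}. So your argument has to stand on its own, and for valuations that have key polynomials it does.

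\emph{Injectivity.} You combine Proposition~\ref{PropNartValEquivsme}, the fact that $\Gamma_{\rm sme}$ contains one representative per $\sim_{\rm sme}$-class, and $\mu=\mu_\phi$. This is the same reasoning the paper uses elsewhere, and it is correct.

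\emph{Surjectivity.} Your $\Phi$ is the order-preserving extension of $\psi$ to $\Q$-spans inside the uniquely divisible group $\R^{\mathbb{I}}_{\rm lex}$, restricted to $\Gamma^0_\mu+\Z\alpha$. It is well defined, fixes $\Gamma_\Q$, and gives $(\Phi\circ\mu)(f)=\min_j\{\mu(f_j)+j\beta\}$. So equivalence is automatic and ${\rm sv}(\Phi\circ\mu)=\beta$. A minor correction: landing in $\R^{\mathcal{I}_{S'}}_{\rm lex}$ only needs that it is a group containing $\Gamma_\Q$ and $\beta$. Divisibility is needed for $\R^{\mathbb{I}}_{\rm lex}$, to define the extension, not there.

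\textbf{The gap: maximal valuations are never treated.} By Theorem~\ref{MLteo2.3}, a valuation-algebraic valuation, or one with nontrivial support, has ${\rm KP}(\mu)=\emptyset$. For such $\mu$, the objects ${\rm sv}(\mu)$, $\Gamma^0_\mu$ and condition (a) of Proposition~\ref{PropNartValEquivsme} are undefined. So both ``they have a common key polynomial $\phi$'' and ``put $\alpha={\rm sv}(\mu)$'' break down, yet these classes belong to $\mathbb{V}$.

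\textbf{How to close it.}
\begin{enumerate}
\item By Lemma~\ref{equi}, a maximal valuation is never equivalent to a non-maximal one, so these classes can be handled on their own.
\item If $\mu\in\mathcal{T}(v,\R^{\mathbb{I}})$ is maximal, then $\Gamma_\mu/\Gamma$ is torsion. In the nontrivial-support case this is because $\mu$ factors through a valuation on the finite extension $K[x]/\supp(\mu)$ of $K$.
\item Unique divisibility of $\R^{\mathbb{I}}_{\rm lex}$ then gives $\Gamma_\mu\subset\Gamma_\Q\subset\R^{\mathcal{I}}_{\rm lex}$.
\item Remark~\ref{equi_fecho} shows that each such class has exactly one representative in $\mathcal{T}(v,\R^{\mathbb{I}})$, and it lies in $\mathcal{T}_{\R_{\rm sme}}$.
\end{enumerate}
This settles injectivity and surjectivity on these classes, provided you state explicitly the convention that these leaves belong to $\mathcal{T}_{\rm sme}$. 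The paper's definition of ${\rm sv}$ leaves this implicit. Read literally, the condition ${\rm sv}(\nu)\in\Gamma_{\rm sme}$ would exclude them, and the map would miss every class of maximal valuations.
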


	
	
	

\section{Upper bounds and suprema in $\mathcal{T}(v,\Lambda)$}\label{SecUpperBoundsSuprema}

\subsection{Upper bounds of increasing families of valuations}

 Let $\mathfrak{v}=\{\nu_i\}_{i\in I}$ be a totally ordered subset of $\mathcal{T}=\mathcal{T}(v,\Lambda)$, with no maximum. 
Let us assume that $\Gamma_\Q\subseteq \Lambda$.
 We wish to construct upper bounds to $\mathfrak{v}$.

 By \cite[Corollary 2.3]{novbarnabe},  for every $f\in K[x]$, either $\{\nu_i(f)\}_{i\in I}$ is strictly increasing, or there exists $i_0\in I$ such that $\nu_i(f)=\nu_{i_0}(f)$ for every $i\in I$ with $i\geq i_0$. 
%
%
%
 We say that $f$ is $\boldsymbol{\mathfrak{v}}$\textbf{-stable} if there exists $i_f\in I$ such that 
 \begin{equation*}\label{eqVStable}
 	\nu_i(f)=\nu_{i_f}(f) \text{ for every } i\in I \text{ with } i\geq i_f.
 \end{equation*}

We distinguish two cases:

\begin{description}
    \item[Case (1)]  $\mathfrak{v}$ has no maximum and every $f\in K[x]$ is $\mathfrak{v}$-stable;

    \item[Case (2)] $\mathfrak{v}$ has no maximum and there exists $q\in K[x]$ $\mathfrak{v}$-unstable.
\end{description}

A totally ordered subset $\mathfrak{v}$ of $\mathcal{T}(v,\Lambda)$ without maximum will be called an \textbf{increasing family of valuations}. For increasing families in the Case (1), we have the following result.

%

\begin{Prop}\label{lemVstable}\cite[Proposition 4.9]{josneicaio2}
	Let $\mathfrak{v}= \{\nu_i\}_{i\in I}$ be an increasing family in $\mathcal{T}$
	such that 
	every $f\in K[x]$ is $\mathfrak{v}$-stable. 
	Define 
	\begin{displaymath}
	\begin{array}{ccccc}
		\nu& : &K[x]& \longrightarrow &\Lambda_\infty\\
		&&f&\longmapsto &\nu_{i_f}(f)
	\end{array} .	
	\end{displaymath}
	Then $\nu\in \mathcal{T}$ and  $\nu_i< \nu$ for every $i\in I$. Moreover, for $\nu'\in \mathcal{T}$, if $\nu'\leq \nu$ and $\nu_i\leq \nu'$ for every $i\in I$, then  $\nu'=\nu$. 
\end{Prop}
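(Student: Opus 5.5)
Well-definedness of $\nu$ comes from stability: $\nu(f)$ does not depend on the choice of $i_f$. If $i,j\geq i_f$ are two indices, then both $\nu_i(f)$ and $\nu_j(f)$ equal $\nu_{i_f}(f)$. Equivalently, $\nu(f)=\nu_i(f)$ for every $i\geq i_f$.

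To check (V1)–(V3), fix $f,g$ and choose $k\in I$ with $k\geq i_f,i_g,i_{fg},i_{f+g}$; this is possible because $I$ is totally ordered. Then
\[
\nu(fg)=\nu_k(fg)=\nu_k(f)+\nu_k(g)=\nu(f)+\nu(g),
\]
and in the same way $\nu(f+g)=\nu_k(f+g)\geq\min\{\nu(f),\nu(g)\}$. For $a\in K$ every $\nu_i(a)=v(a)$, so $\nu(a)=v(a)$; in particular $\nu(1)=0$ and $\nu(0)=\infty$. The values of $\nu$ lie in $\Lambda_\infty$, so the value group of $\nu$ is contained in $\Lambda$, and hence $\nu\in\mathcal T$.

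Next, $\nu_i\leq\nu$ for every $i$. Given $f$ and $i$, pick $k\geq i,i_f$; since the family is increasing, $\nu_i(f)\leq\nu_k(f)=\nu(f)$. The inequality is strict: if $\nu_i=\nu$ for some $i$, then any $j>i$ (which exists because there is no maximum) would satisfy $\nu_i<\nu_j\leq\nu=\nu_i$, a contradiction.

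For the last claim, let $\nu'\in\mathcal T$ with $\nu_i\leq\nu'\leq\nu$ for all $i$. For each $f$ we have $\nu_{i_f}(f)\leq\nu'(f)\leq\nu(f)=\nu_{i_f}(f)$, so $\nu'(f)=\nu(f)$ and therefore $\nu'=\nu$. In fact this shows more: $\nu$ is the least upper bound of $\mathfrak v$. If $\nu'$ is any upper bound, the same chain gives $\nu(f)=\nu_{i_f}(f)\leq\nu'(f)$ for every $f$, so $\nu\leq\nu'$.

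None of these steps is a real obstacle. The only point needing care is choosing a single common index $k$ that dominates all the relevant $i_f$'s, and this uses only that $I$ is totally ordered.
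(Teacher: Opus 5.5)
Your argument is correct: one index dominating the finitely many stabilization indices $i_f,i_g,i_{fg},i_{f+g}$ gives (V1)--(V3), and the squeeze $\nu_{i_f}(f)\le\nu'(f)\le\nu(f)=\nu_{i_f}(f)$ gives the final claim. Your stronger observation that $\nu$ is in fact the least upper bound matches the paper's later remark that $\nu$ is a supremum of $\mathfrak{v}$. The paper gives no proof of this proposition and only cites \cite[Proposition 4.9]{josneicaio2}, so there is no argument to compare against, but your direct verification is a complete proof on its own.
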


\begin{Obs}
    By \cite[Proposition 4.1]{Nart_1}, the valuation $\nu$ in the above proposition is valuation-algebraic.
\end{Obs}

For the second case, let $\mathfrak{v}= \{\nu_i\}_{i\in I}$ be an increasing family in $\mathcal{T}(v,\Lambda)$ such that  there is at least one polynomial that is  $\mathfrak{v}$-unstable. 
For every $\mathfrak{v}$-stable $f$ we set $\mathfrak{v}(f) = \nu_{i_f}(f)$. Let $Q$ be a monic $\mathfrak{v}$-unstable polynomial  of smallest  degree among all $\mathfrak{v}$-unstable polynomials. A polynomial with this property is called a \textbf{limit key polynomial} for $\mathfrak{v}$ (see \cite{Nart_1} and \cite{NartJosneiItaliano}).
Take $\gamma\in \Lambda_\infty$ such that $\gamma>\nu_i(Q)$ for every $i\in I$. 
Consider the map 
$$\mu_{Q,\gamma}(f)=\underset{0\leq j \leq r}{\min}\{ \mathfrak{v}(f_j)+j\gamma  \}, $$
where $f_0+f_1Q+\ldots+f_rQ^r$ is the $Q$-expansion of $f$. The following result is, for instance, a consequence of  \cite[Proposition 4.6]{Nart_1} or \cite[Theorem 2.4]{novbarnabe}.
\begin{Prop}\label{propMuQGamma}
	We have $\mu_{Q,\gamma}\in \mathcal{T}$ and  $\nu_i<\mu_{Q,\gamma}$ for every $i\in I$. Moreover, $Q$ is a key polynomial for $\mu_{Q,\gamma}$ of minimal degree.
\end{Prop}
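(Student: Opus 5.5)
Since the paper notes that this proposition follows from \cite[Proposition 4.6]{Nart_1} or \cite[Theorem 2.4]{novbarnabe}, I sketch a direct argument. I would verify the three claims in order: $\mu_{Q,\gamma}$ is a valuation in $\mathcal{T}$, it dominates every $\nu_i$, and $Q$ is a key polynomial of minimal degree for it.

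\emph{Reduction to a limit valuation below degree $\deg Q$.} Every polynomial of degree less than $\deg Q$ is $\mathfrak{v}$-stable by minimality of $Q$. So $\mathfrak{v}$ restricted to $K[x]_{<\deg Q}$ is a well-defined map. Since there is only ever a finite set of coefficients $f_j$ in play, we can pick one index $i$ beyond all stabilization indices $i_{f_j}$; then $\mathfrak{v}(f_j)=\nu_i(f_j)$ for all $j$. The key auxiliary fact I would establish first is that for $i$ large, $Q$ behaves like a key polynomial for $\nu_i$, in the following sense. For $f=\sum f_jQ^j$, once $i$ is large enough relative to $f$, we have $\nu_i(f)=\min_j\{\nu_i(f_j)+j\nu_i(Q)\}$, i.e.\ $\nu_i=(\nu_i)_Q$ on $f$. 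I would prove this by the standard argument: products $f_jf_k$ of polynomials of degree less than $\deg Q$ have $Q$-expansions $f_jf_k=aQ+b$, and stability gives $\nu_i(f_jf_k)=\nu_i(b)<\nu_i(aQ)$ eventually, because $\nu_i(Q)$ is strictly increasing while $\nu_i(a),\nu_i(b)$ stabilize. With this in hand, $\mu_{Q,\gamma}$ is the ``augmentation'' of the stable part by $Q\mapsto\gamma$.

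\emph{Valuation axioms.} (V3) is clear. (V2) is immediate from additivity of $Q$-expansions and (V2) for the stable values. For (V1), I would fix $f,g$ and choose $i$ large for all coefficients involved, including those of all products $f_jg_k$. I would then compare $\mu_{Q,\gamma}$ with the truncation of $\nu_i$ at $Q$ with $\nu_i(Q)$ replaced by $\gamma$. Concretely, the product $f_jg_k=a_{jk}Q+b_{jk}$ satisfies $\mathfrak{v}(b_{jk})=\mathfrak{v}(f_j)+\mathfrak{v}(g_k)$ and $\mathfrak{v}(a_{jk})+\nu_i(Q)>\mathfrak{v}(b_{jk})$. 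The hardest point is to upgrade this last inequality so that it still holds with $\gamma$ in place of $\nu_i(Q)$; this is true since $\gamma>\nu_i(Q)$. Then the usual Mac Lane computation shows that the minimal-index terms achieving $\min_j$ and $\min_k$ produce a non-cancelled term of $fg$, giving $\mu_{Q,\gamma}(fg)=\mu_{Q,\gamma}(f)+\mu_{Q,\gamma}(g)$. I expect this multiplicativity to be the main obstacle, specifically the bookkeeping of carries in the $Q$-expansion of $fg$. I would handle it via the graded-algebra viewpoint: the initial forms of polynomials of degree less than $\deg Q$ multiply without carries modulo terms of strictly larger value. Values lie in $\Lambda$ (or $\infty$ when $\gamma=\infty$), and the restriction to $K$ is $v$, so $\mu_{Q,\gamma}\in\mathcal{T}$.

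\emph{Comparison and key polynomial.} For $\nu_i<\mu_{Q,\gamma}$, I would take any $f$ and $i'\ge i$ large. Then $\nu_i(f)\le\nu_{i'}(f)=\min_j\{\mathfrak{v}(f_j)+j\nu_{i'}(Q)\}\le\mu_{Q,\gamma}(f)$, and strictness is witnessed by $f=Q$. For the key polynomial claim, $\mathrm{in}\,Q$ generates a prime ideal: by construction $\mathcal{G}_{\mu_{Q,\gamma}}$ is a polynomial-type extension in $\mathrm{in}\,Q$ over the degree-less-than-$\deg Q$ part, which is a domain by the (V1) computation. Also, no $\mathrm{in}\,f$ with $\deg f<\deg Q$ is divisible by $\mathrm{in}\,Q$, since such an $f$ is its own $Q$-expansion. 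Finally, minimality of degree holds because every polynomial of smaller degree $h$ satisfies $\mu_{Q,\gamma}=(\mu_{Q,\gamma})_h$ failing to separate, as the products of such polynomials have unit initial forms in the degree-zero part. Equivalently, any key polynomial of smaller degree would contradict stability of all polynomials of degree less than $\deg Q$.
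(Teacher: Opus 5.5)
The paper gives no proof of its own here; it defers to \cite[Proposition 4.6]{Nart_1} and \cite[Theorem 2.4]{novbarnabe}. Your direct argument therefore has to stand alone, and it has two genuine gaps. Most of the skeleton is sound: (V2) and (V3), the multiplicativity check via $f_jg_k=a_{jk}Q+b_{jk}$ with $\mathfrak{v}(b_{jk})=\mathfrak{v}(f_j)+\mathfrak{v}(g_k)<\mathfrak{v}(a_{jk})+\gamma$, and the description of $\mathcal{G}_{\mu_{Q,\gamma}}$ as a polynomial ring in $\mathrm{in}\,Q$ over the initial forms of degree $<\deg Q$, which gives primality of $(\mathrm{in}\,Q)$.

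\textbf{The auxiliary fact.} You claim $\nu_i(f)=\min_j\{\mathfrak{v}(f_j)+j\nu_i(Q)\}$ for $i$ large depending on $f$. The reason you give does not prove it. The product identity controls the carries in (V1); it says nothing about whether the minimal terms of $\sum_j f_jQ^j$ cancel under $\nu_i$. Nor can the fact come from any uniform property of $Q$: in general $\nu_i\neq(\nu_i)_Q$ for every $i$, which already happens for families attached to pseudo-convergent sequences without limit in $K$. The correct reason is elementary. Take $i$ past the stabilization indices of the $f_j$. A tie $\mathfrak{v}(f_j)+j\nu_i(Q)=\mathfrak{v}(f_k)+k\nu_i(Q)$ with $j\neq k$ occurs for at most one $i$, because $(j-k)\nu_i(Q)$ is strictly monotone in $i$. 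So eventually the minimum is attained at a single index, and the strict triangle inequality gives the equality. You genuinely need this fact: (V2) alone gives only $\nu_{i'}(f)\geq\min_j\{\mathfrak{v}(f_j)+j\nu_{i'}(Q)\}$, which is the wrong direction for proving $\nu_i\leq\mu_{Q,\gamma}$.

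\textbf{Minimal degree.} This claim is not actually argued. The sentence about $(\mu_{Q,\gamma})_h$ ``failing to separate'' has no content, and a lower-degree key polynomial does not contradict stability by itself. Either of the following fixes works.
\begin{enumerate}
\item[(i)] Key polynomials are $\mu$-minimal, i.e.\ $\mu=\mu_\phi$ (see \cite{NartKeyPolyValuedFields}). Suppose $\phi\in\operatorname{KP}(\mu_{Q,\gamma})$ with $\deg\phi<\deg Q$, and write $Q=\sum_j q_j\phi^j$ with $\deg q_j<\deg\phi$. Since $\phi$ and the $q_j$ are stable and $\mu_{Q,\gamma}=\mathfrak{v}$ on them, for $i$ large you get $\gamma=\mu_{Q,\gamma}(Q)=\min_j\{\nu_i(q_j)+j\nu_i(\phi)\}\leq\nu_i(Q)<\gamma$. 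The contradiction is with $\gamma>\nu_i(Q)$, not with stability alone.
\item[(ii)] A self-contained route. $Q$ is irreducible, since a factorization into factors of lower degree (hence stable factors) would make $Q$ stable. So for $0\neq f$ with $\deg f<\deg Q$, B\'ezout gives $fg=1-hQ$ with $\deg g<\deg Q$ and $\deg h<\deg f$. Your product identity with $a=-h$, $b=1$ gives $\mathfrak{v}(f)+\mathfrak{v}(g)=0<\mathfrak{v}(h)+\gamma$. Hence $\mathrm{in}\,f\cdot\mathrm{in}\,g=1$, and a unit cannot generate a prime ideal.
\end{enumerate}

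\textbf{The case $\gamma=\infty$.} Your graded-algebra step tacitly assumes $\gamma\in\Lambda$. For $\gamma=\infty$ one has $\mathrm{in}\,Q=0$ and $\mu_{Q,\infty}$ has support $QK[x]$. By Theorem~\ref{MLteo2.3} it then has no key polynomials at all, so the last assertion of the proposition must be read with $\gamma\in\Lambda$.
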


%
%

\subsection{Supremum of an increasing family of valuations}\label{SecSupremum}

\begin{Def}
	Let $\mathfrak{v}= \{\nu_i\}_{i\in I}$ be a totally ordered set in $\mathcal{T}$ and $\mu\in \mathcal{T}$. We say that $\mu$ is a \textbf{supremum} of $\mathfrak{v}$ if $\nu_i\leq \mu$ for every $i\in I$ and if $\mu'\in \mathcal{T}$ is such that $\nu_i\leq \mu'$ for every $i\in I$ and $\mu'\leq \mu$, then  $\mu=\mu'$.  
\end{Def}

Even though we have a partial order in $\mathcal{T}(v,\Lambda)$, we will prove that the supremum of an increasing family of valuations is unique. 
For this, we will use Proposition \ref{LemExisteJoint}.
%
%
%

\begin{Lema}
	The supremum of an increasing family $\mathfrak{v}\subset \mathcal{T}(v,\Lambda)$ is unique. 
\end{Lema}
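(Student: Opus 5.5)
Suppose $\mu$ and $\mu'$ are both suprema of the increasing family $\mathfrak{v}=\{\nu_i\}_{i\in I}$ in the sense of the definition above. We want to show $\mu=\mu'$. The natural candidate for comparing them is the infimum $\rho:=\mu\wedge\mu'$, which exists by Proposition \ref{LemExisteJoint}. By definition, $\rho\leq\mu$ and $\rho\leq\mu'$. Once we know that $\rho$ is itself an upper bound of $\mathfrak{v}$, the minimality clause in the definition of supremum, applied to $\mu$ with $\mu'':=\rho\leq\mu$, gives $\rho=\mu$. Applied to $\mu'$, it gives $\rho=\mu'$. Hence $\mu=\mu'$.

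So the whole argument reduces to showing that $\nu_i\leq\rho$ for every $i\in I$. Fix $i$. We have $\nu_i\leq\mu$ and $\nu_i\leq\mu'$, so $\nu_i\in(-\infty,\mu]\cap(-\infty,\mu']$. Since $\rho$ is defined in the paper as $\max\{(-\infty,\mu]\cap(-\infty,\mu']\}$, it is in particular an upper bound of that intersection, and so $\nu_i\leq\rho$.

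There is essentially no obstacle. The only point requiring care is to use the description of $\mu\wedge\mu'$ as the \emph{maximum} of the common lower set, rather than merely as a maximal element of it. This is what makes every common lower bound, in particular each $\nu_i$, lie below $\rho$. Should one only know $\rho$ is maximal, one would instead argue that $(-\infty,\mu]$ is totally ordered, so $\nu_i$ and $\rho$ are comparable. If $\rho<\nu_i$, then $\nu_i$ would be a common lower bound strictly above $\rho$, contradicting maximality. Hence $\nu_i\leq\rho$ in either reading.
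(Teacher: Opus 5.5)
Your proof is correct and uses the same idea as the paper. The meet $\mu\wedge\mu'$ exists in $\mathcal{T}$ by Proposition~\ref{LemExisteJoint} (under the standing assumption $\Gamma_\Q\subseteq\Lambda$) and lies above every $\nu_i$, so the minimality of each supremum forces it to equal both. The paper first proves by contradiction that the two suprema are comparable, splitting on whether $\mu_1\wedge\mu_2$ bounds $\mathfrak{v}$, and only then concludes; you apply the meet argument directly, which avoids the case split.
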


\begin{proof}Take $\mu_1,\mu_2\in \mathcal{T}$ suprema of $\mathfrak{v}=\{\nu_i\}_{i\in I}$. We first see that they are comparable valuations. Suppose, aiming for a contradiction, that $\mu_1\not\leq \mu_2$ and $\mu_2\not\leq \mu_1$. By Lemma \ref{LemExisteJoint}, there exists  $\mu_1\wedge\mu_2$ and, since it is residue-transcendental, we have $\mu_1\wedge\mu_2\in \mathcal{T}$. Since $\mu_1$ and $\mu_2$ are incomparable, we must have $\mu_1\wedge\mu_2<\mu_1$ and $\mu_1\wedge\mu_2<\mu_2$. Since the set $(-\infty, \mu_1]$ is totally ordered \cite[Theorem 2.4]{Nart_2}, $\mu_1\wedge\mu_2$ and $\nu_i$ are comparable for each $i\in I$. We have two cases.
	
	\begin{itemize}
		\item If $ \mu_1\wedge\mu_2\geq \nu_i$ for every $i\in I$, then using that $\mu_1$ and $\mu_2$ are suprema we conclude that $\mu_1=\mu_1\wedge\mu_2=\mu_2$, a contradiction. 
		
		\item If $\mu_1\wedge\mu_2< \nu_j$ for some $j\in I$, then we have $\mu_1\wedge\mu_2<\nu_j\leq \mu_1$ and $\mu_1\wedge\mu_2<\nu_j\leq \mu_2$, contradicting the definition of $\mu_1\wedge\mu_2$.
	\end{itemize}
	
	Hence, $\mu_1$ and $\mu_2$ are comparable, say $\mu_1\leq \mu_2$. Since $\mu_2$ is a supremum to $\mathfrak{v}$, we conclude that $\mu_1=\mu_2$.
\end{proof}

We will denote by $\sup_{i\in I}\nu_i$ the unique supremum of the family $\mathfrak{v}$.


\begin{Obs}
	If every polynomial is $\mathfrak v$-stable, then the valuation $\nu$ constructed in Proposition \ref{lemVstable} is in fact a supremum for the family $\mathfrak{v}$. 
\end{Obs}

    For families in Case (2) we will be able to construct a canonical supremum for $\mathfrak{v}$ when we look to the special tree $\mathcal{T}_{\rm sme}$.
    

\subsection{Supremum of a family with $\mathfrak{v}$-unstable polynomials}


Let $\mathfrak{v}= \{\nu_i\}_{i\in I}$ be an increasing family in $\mathcal{T}_{\rm sme}$ that admits $\mathfrak{v}$-unstable polynomials. 
Let $Q$ be a limit key polynomial for $\mathfrak{v}$. 
We saw that for $\gamma\in \R^\mathbb{I}_\infty$ such that $\gamma>\nu_i(Q)$ for every $i\in I$ we have a valuation  given by Proposition \ref{propMuQGamma}:
$$\mu_{Q,\gamma}(f)=\underset{0\leq j \leq r}{\min}\{ \mathfrak{v}(f_j)+j\gamma  \}, $$
where $f_0+f_1Q+\ldots+f_rQ^r$ is the $Q$-expansion of $f$. 
%
Take the element $\gamma=\sup\gamma^L\in (\Gamma_{\rm sme})_\infty$ that defines the cut in $\Gamma_\Q$ with
$$\gamma^L=\{\alpha\in \Gamma_{\Q }\mid \alpha \leq \nu_i(Q) \text{ for some } i \in I  \} .$$

\begin{Lema}\label{lemSupAlgType}
	The cut $\gamma$ and the valuation $\mu_{Q,\gamma}$ are independent of the choice of the polynomial $Q$, among the monic $\mathfrak v$-unstable polynomials of smallest degree. Moreover, $\mu_{Q.\gamma}\in \mathcal{T}_{\rm sme}$. 
    

\end{Lema}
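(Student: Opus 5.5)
The proof splits into three parts: independence of $\gamma$ from $Q$, independence of $\mu_{Q,\gamma}$ from $Q$, and membership of $\mu_{Q,\gamma}$ in $\mathcal{T}_{\rm sme}$.

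\emph{Independence of $\gamma$.} Let $Q$ and $Q'$ be two monic $\mathfrak v$-unstable polynomials of minimal degree $m$. Write $Q'=Q+a$ with $\deg a<m$. By minimality of $m$, $a$ is $\mathfrak v$-stable, so $\nu_i(a)=\mathfrak v(a)$ for all large $i$. Since $\{\nu_i(Q)\}$ is strictly increasing and $\nu_i(Q)<\nu_i(Q')$ cannot hold forever without $\nu_i(Q')$ being controlled, we argue as follows. If eventually $\nu_i(Q)<\nu_i(a)$, then $\nu_i(Q')=\nu_i(Q)$ for large $i$. If instead $\nu_i(a)\le\nu_i(Q)$ for some large $i$, then, because $\nu_i(Q)$ keeps increasing while $\nu_i(a)$ is constant, we would get $\nu_j(Q')=\mathfrak v(a)$ for all larger $j$. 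This makes $Q'$ stable, a contradiction. Hence $\nu_i(Q)=\nu_i(Q')$ for all large $i$. The sets $\gamma^L$ defined via $Q$ and via $Q'$ therefore coincide, since both only depend on a cofinal tail of $\{\nu_i(Q)\}$.

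\emph{Independence of $\mu_{Q,\gamma}$.} The argument above shows that $\gamma>\mathfrak v(a)$: indeed $\mathfrak v(a)<\nu_i(Q)$ eventually, and $\mathfrak v(a)\in\Gamma_\Q$ (it is a value of $\nu_i$ on a polynomial of degree less than $\deg\nu_i$'s relevant degree, or use that stable values lie in $\Gamma_{\nu_i}$). We then want $\mu_{Q,\gamma}(Q')=\min\{\gamma,\mathfrak v(a)\}=\mathfrak v(a)$? That would be wrong. Instead compare via inequality. First I will check that $\mathfrak v(a)\geq$ every element of $\gamma^L$ fails to be an obstruction, i.e. that $\mathfrak v(a)\ge\gamma$. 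This holds because otherwise $\nu_i(Q')=\mathfrak v(a)$ eventually, which is the contradiction above; so $\mathfrak v(a)\notin\gamma^L$ and hence $\mathfrak v(a)\ge\gamma$. Consequently $\mu_{Q,\gamma}(Q')=\min\{\gamma,\mu_{Q,\gamma}(a)\}=\gamma$. Both $\mu_{Q,\gamma}$ and $\mu_{Q',\gamma}$ agree with $\mathfrak v$ on polynomials of degree $<m$. By Proposition \ref{propMuQGamma}, each of $Q$ and $Q'$ is a minimal-degree key polynomial of its valuation, so each valuation equals its truncation at that polynomial (by \cite[Proposition 2.3]{NartJosneiItaliano}). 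Computing $\mu_{Q,\gamma}$ through $Q'$-expansions then shows $\mu_{Q,\gamma}\le\mu_{Q',\gamma}$, and symmetrically $\mu_{Q',\gamma}\le\mu_{Q,\gamma}$. The main obstacle I expect is here: we must show that $\mu_{Q,\gamma}=(\mu_{Q,\gamma})_{Q'}$, i.e. that $Q'$ is also a key polynomial for $\mu_{Q,\gamma}$. I would get this from the fact that ${\rm in}\,Q'={\rm in}\,Q$ whenever $\mathfrak v(a)>\gamma$. The delicate subcase is $\mathfrak v(a)=\gamma$, which is possible only if $\gamma\in\Gamma_\Q$; there I would use $\gamma\notin\gamma^L$ to get $\mathfrak v(a)>\gamma$ unless $\gamma=\max\gamma^L$, and treat that case by the truncation comparison directly.

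\emph{Membership in $\mathcal{T}_{\rm sme}$.} The value group of $\mu_{Q,\gamma}$ is generated by $\Gamma^0=\{\mathfrak v(a):\deg a<m\}\subset\Gamma_\Q$ together with ${\rm sv}=\gamma$. Since $\gamma\in\Gamma_{\rm sme}$ is the quasi-cut representative, $\Gamma_\mu=\langle\Gamma^0,\gamma\rangle$ sits inside some $\R^{\mathcal I_S}_{\rm lex}$. This follows since $\Gamma_{\rm sme}\subset\R_{\rm sme}$ and $\Gamma_\Q\subset\R^{\mathcal I}_{\rm lex}\subset\R^{\mathcal I_S}_{\rm lex}$. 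Thus $\mu_{Q,\gamma}\in\mathcal{T}_{\R_{\rm sme}}$ and ${\rm sv}(\mu_{Q,\gamma})=\gamma\in\Gamma_{\rm sme}$.
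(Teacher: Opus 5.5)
Your overall route is the paper's: write $Q'=Q+a$ with $a$ $\mathfrak v$-stable, and get $\nu_i(Q)=\nu_i(Q')$ for large $i$, so that $\gamma^L$, and hence $\gamma$, is unchanged. Then read off ${\rm sv}(\mu_{Q,\gamma})=\mu_{Q,\gamma}(Q)=\gamma\in\Gamma_{\rm sme}$, because $Q$ is a minimal-degree key polynomial of $\mu_{Q,\gamma}$. Your first paragraph supplies the dichotomy argument that the paper leaves implicit. Your last paragraph also checks $\Gamma_{\mu_{Q,\gamma}}\subset\R^{\mathcal I_S}_{\rm lex}$, which the paper skips.

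The middle paragraph, where you try to justify $\mu_{Q,\gamma}=\mu_{Q',\gamma}$ (the paper just asserts it), needs repair.

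\textbf{A false sentence.} The claim ``the argument above shows that $\gamma>\mathfrak v(a)$: indeed $\mathfrak v(a)<\nu_i(Q)$ eventually'' is false. It contradicts your first paragraph, which gives $\mathfrak v(a)>\nu_i(Q)$ for all $i$. Only your later conclusion $\mathfrak v(a)\ge\gamma$ is right. The membership $\mathfrak v(a)\in\Gamma_\Q$ that it relies on should be justified as $\mathfrak v(a)=\mu_{Q,\gamma}(a)\in\Gamma^0_{\mu_{Q,\gamma}}\subset\Gamma_\Q$. Values of $\nu_i$ need not lie in $\Gamma_\Q$ when $\nu_i$ is value-transcendental.

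\textbf{The inequality direction.} More importantly, the comparison through $Q'$-expansions goes the other way. For any valuation $\mu$ and $f=\sum_j g_jQ'^j$ one only has $\mu(f)\ge\min_j\{\mu(g_j)+j\mu(Q')\}$. Using $\mu_{Q,\gamma}(g_j)=\mathfrak v(g_j)$ and $\mu_{Q,\gamma}(Q')=\min\{\mathfrak v(a),\gamma\}=\gamma$, you get $\mu_{Q,\gamma}\ge\mu_{Q',\gamma}$, not $\le$.

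That is already enough. The symmetric computation $\mu_{Q',\gamma}(Q)=\min\{\mathfrak v(-a),\gamma\}=\gamma$ gives $\mu_{Q',\gamma}\ge\mu_{Q,\gamma}$, hence equality. Note that $\mu_{Q',\gamma}$ is a valuation by Proposition \ref{propMuQGamma}, since $\nu_i(Q')\le\nu_j(Q')=\nu_j(Q)<\gamma$ for large $j\ge i$.

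\textbf{Consequences for your sketch.} The ``main obstacle'' you identify is unnecessary: you do not need to prove $Q'\in\operatorname{KP}(\mu_{Q,\gamma})$ so that $\mu_{Q,\gamma}$ equals its truncation at $Q'$. The subcase $\mathfrak v(a)=\gamma$ also needs no separate handling, and your treatment of it is self-contradictory (``$\gamma\notin\gamma^L$ \dots unless $\gamma=\max\gamma^L$''). In any case it cannot occur when $\gamma$ is a genuine cut, since $\mathfrak v(a)\in\Gamma_\Q$.
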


\begin{proof}Suppose  that $Q'$ is another polynomial of minimal degree not $\mathfrak{v}$-stable and write $Q'=Q+h$. Since $\deg(Q)=\deg(Q')$ and both are monic, we have $\deg(h)<\deg(Q)$, then $h$ is $\mathfrak{v}$-stable.  Since $\{\nu_i(h)\}_{i\in I}$ is ultimately constant and $\{\nu_i(Q)\}_{i\in I}$ and $\{\nu_i(Q')\}_{i\in I}$ are increasing, we deduce that $\nu_i(Q)=\nu_i(Q')$ for sufficiently large $i\in I$. This implies that $\gamma$ does not depend on the choice of $Q$ among the monic $\mathfrak v$-unstable polynomials of smallest degree and $\mu_{Q,\gamma}=\mu_{Q',\gamma}$.

    For the second part, by the definition of $\mu_{Q,\gamma}$ we know that $Q$ is a key polynomial of minimal degree for this valuation (see \cite[Proposition 4.2]{josneicaio2} and \cite[Corollary 7.13]{NartKeyPolyValuedFields}). Hence, ${\rm sv}(\mu_{Q,\gamma})=\mu_{Q, \gamma}(Q)=\gamma\in \Gamma_{\rm sme}.$ Therefore, $\mu_{Q.\gamma}\in \mathcal{T}_{\rm sme}$.
    \end{proof}

\begin{Teo}\label{teoSupinTsme}
    Every increasing family of valuations in $\mathcal{T}_{\rm sme}$ admits a supremum. 
\end{Teo}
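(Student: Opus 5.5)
We split according to the dichotomy of Section \ref{SecUpperBoundsSuprema}. In Case (1), where every polynomial is $\mathfrak{v}$-stable, the valuation $\nu$ of Proposition \ref{lemVstable} is a supremum. This is the content of the Remark following that proposition. It remains to check that $\nu$ lies in $\mathcal{T}_{\rm sme}$. Its values are values $\nu_{i_f}(f)$ of members of the family. Moreover, $\nu$ is valuation-algebraic, so by Remark \ref{obs2} we have $\Gamma_\nu\subseteq\Gamma_\Q$ and $\nu$ has trivial support. Since $\Gamma_\Q\subset\Gamma_{\rm sme}\subset\R^{\mathcal I}_{\rm lex}$, both $\Gamma_\nu\subset\R^{\mathcal{I}_S}_{\rm lex}$ and ${\rm sv}(\nu)\in\Gamma_{\rm sme}$ hold. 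Thus $\nu\in\mathcal{T}_{\rm sme}$.

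In Case (2) we take a limit key polynomial $Q$ and the cut $\gamma=\sup\gamma^L\in(\Gamma_{\rm sme})_\infty$. We claim that $\mu:=\mu_{Q,\gamma}$ is the supremum. By Proposition \ref{propMuQGamma} and Lemma \ref{lemSupAlgType}, $\mu\in\mathcal{T}_{\rm sme}$ and $\nu_i<\mu$ for all $i$, provided $\gamma>\nu_i(Q)$ for every $i$. This holds because $\{\nu_i(Q)\}$ is strictly increasing without maximum. Indeed, each $\nu_i(Q)$ lies in $\Gamma_\Q$ (the $\nu_i$ are not maximal, hence not value-transcendental unless they are at the top; one should check that $\nu_i(Q)\in\Gamma_\Q$, or else work directly with quasi-cuts). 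It lies in $\gamma^L$ but is strictly below some $\nu_j(Q)$, so $\nu_i(Q)<\gamma$.

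For minimality, take $\mu'\in\mathcal{T}_{\rm sme}$ with $\nu_i\le\mu'\le\mu$ for all $i$. For $\deg f<\deg Q$, $f$ is stable, so $\mathfrak{v}(f)\le\mu'(f)\le\mu(f)=\mathfrak{v}(f)$. Hence $\mu'$ agrees with $\mathfrak v$ below degree $\deg Q$. Moreover $\mu'(Q)\ge\nu_i(Q)$ for all $i$ and $\mu'(Q)\le\gamma$. Since $\mu'(f_jQ^j)=\mathfrak v(f_j)+j\mu'(Q)$, we get $\mu'\ge\mu_{Q,\mu'(Q)}$ in the sense of truncation. The key step is then to show $\mu'(Q)=\gamma$. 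Once this holds, $\mu'\ge(\mu')_Q=\mu_{Q,\gamma}=\mu$, so $\mu'=\mu$.

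The main obstacle is proving $\mu'(Q)=\gamma$, and this is where $\mathcal{T}_{\rm sme}$ is needed. In the larger tree one could pick any $\gamma'\in\R^{\mathbb I}$ filling the same cut, giving several distinct upper bounds. The plan is to use that $\mu'(Q)$ lies in the quasi-cut determined by $\gamma^L$, and that the value $\mu'(Q)$ is greater than every element of $\gamma^L$ (this already follows for the elements $\nu_i(Q)$) and at most $\gamma$. Then $\delta_{\mu'(Q)}=\delta_\gamma$, so $\mu'(Q)\sim_{\rm sme}\gamma$.

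Since $\mu'\le\mu$, the tangent-direction analysis should show that $Q$ is a key polynomial of minimal degree for $\mu'$, so ${\rm sv}(\mu')=\mu'(Q)$. If $\mu'(Q)<\gamma$, then $Q$ would be in $\mathbf t(\mu',\mu)$ and $\mu'$ would equal its truncation. With ${\rm sv}(\mu')\in\Gamma_{\rm sme}$ and $\Gamma_{\rm sme}$ a set of representatives of $\sim_{\rm sme}$-classes, we conclude ${\rm sv}(\mu')=\gamma$. The remaining delicate point is $\gamma=\infty$ (the case $\gamma^L=\Gamma_\Q$). There the same argument shows $\mu'(Q)$ exceeds all of $\Gamma_\Q$. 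The representative of that quasi-cut in $(\Gamma_{\rm sme})_\infty$ is $\infty$, so $\mu'(Q)=\infty$ and again $\mu'=\mu$.
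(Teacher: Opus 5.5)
Your skeleton is the paper's: Case (1) via Proposition~\ref{lemVstable}, and in Case (2) the candidate $\mu_{Q,\gamma}$, agreement of $\mu'$ with $\mathfrak{v}$ below $\deg Q$, equality of the quasi-cuts of $\mu'(Q)$ and $\gamma$, and the fact that $\Gamma_{\rm sme}$ has one representative per $\sim_{\rm sme}$-class. Your ending $\mu'\geq(\mu')_Q=\mu_{Q,\gamma}$ is a nice economy.

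\textbf{The gap.} It sits exactly where $\mathcal{T}_{\rm sme}$ enters. To pass from $\mu'(Q)\sim_{\rm sme}\gamma$ to $\mu'(Q)=\gamma$ you need $\mu'(Q)\in\Gamma_{\rm sme}$, i.e.\ ${\rm sv}(\mu')=\mu'(Q)$. So $Q$ must be a key polynomial of \emph{minimal degree} for $\mu'$, and you only assert that ``the tangent-direction analysis should show'' this. It does not. From $Q\in\mathbf{t}(\mu',\mu)$ you only get $Q\in\operatorname{KP}(\mu')$. A residue-transcendental $\mu'$ may have key polynomials and tangent directions of degree larger than $\deg(\mu')$, and then ${\rm sv}(\mu')\neq\mu'(Q)$.

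The missing step, which is the heart of the paper's proof, is $\mu'(Q)\notin\Gamma_\Q$. If $\alpha\in\Gamma_\Q$ lies in $\gamma^L$, then $\alpha\leq\nu_i(Q)<\nu_j(Q)\leq\mu'(Q)$ for some $j>i$; otherwise $\alpha>\gamma\geq\mu'(Q)$. Moreover $\mu'(f)=\mathfrak{v}(f)\in\Gamma^0_{\mu_{Q,\gamma}}\subseteq\Gamma_\Q$ whenever $\deg f<\deg Q$. Hence $Q$ is a monic polynomial of minimal degree whose $\mu'$-value lies outside $\Gamma_\Q$, and \cite[Theorem 4.2]{NartKeyPolyValuedFields} makes it a key polynomial of minimal degree for $\mu'$. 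You had the sandwich $\gamma^L<\mu'(Q)\leq\gamma$ in hand but never drew this consequence from it.

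\textbf{The case $\gamma^L=\Gamma_\Q$.} Your separate treatment of this case is wrong. The pair $(\Gamma_\Q,\emptyset)$ is a cut, so it is represented by an element $\gamma_0\in\Gamma_{\rm sme}$, the maximum of this complete ordered set. Thus $\gamma=\sup\gamma^L=\gamma_0$ is finite, and the uniform argument above covers this case unchanged.

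Under your reading $\gamma=\infty$, the claim $\mu'(Q)=\infty$ is false. The valuation $\mu_{Q,\gamma_0}$ lies in $\mathcal{T}_{\rm sme}$ (its ${\rm sv}$ is $\gamma_0$) and satisfies $\nu_i<\mu_{Q,\gamma_0}<\mu_{Q,\infty}$, so $\mu_{Q,\infty}$ is not the supremum. The family $\nu_n=[\mu_0;x,n]$, $n\geq 1$, with $\mu_0$ the Gauss valuation and $\Gamma_v=\Z$, is already an example.

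\textbf{A smaller point.} Your parenthetical ``the $\nu_i$ are not maximal, hence not value-transcendental'' is backwards. By Theorem~\ref{MLteo2.3}, value-transcendental valuations are never maximal, so this does not show $\nu_i(Q)\in\Gamma_\Q$. What you actually need there is only $\gamma>\nu_i(Q)$ for all $i$, which is the hypothesis Proposition~\ref{propMuQGamma} requires.
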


\begin{proof}
If $\mathfrak{v}\subset \mathcal{T}_{\rm sme}$ is such that every polynomial is $\mathfrak{v}$-stable, then the supremum was constructed in Proposition \ref{lemVstable}. 

	Let $\mathfrak{v}= \{\nu_i\}_{i\in I}$ be an increasing family in $\mathcal{T}_{\rm sme}$ which admits $\mathfrak{v}$-unstable polynomials. Take $Q$ and $\gamma$ as in Lemma \ref{lemSupAlgType} above. We will show that $\mu_{Q, \gamma}$ is a supremum for $\mathfrak{v}$. Suppose $\mu\in \mathcal{T}_{\rm sme}$ is such that $\nu_i\leq \mu$ for every $i\in I$ and $\mu\leq \mu_{Q,\gamma}$. 	
	For $f\in K[x]$, if $\deg(f)<\deg(Q)$, then $f$ is  $\mathfrak{v}$-stable and for some $i\in I$ we have
	\[
	\mu_{Q,\gamma}(f)=\nu_i(f)\leq \mu(f)\leq \mu_{Q,\gamma}(f), 
	\]
	hence $\mu(f) = \mu_{Q,\gamma}(f)$.
	
we have $Q$ is a key polynomial of minimal degree for $\mu_{Q,\gamma}$. We will show that $Q$ is also a key polynomial for $\mu$ of minimal degree.	Suppose $\mu(Q)\in \Gamma_\Q$. By hypothesis $\gamma=\mu_{Q,\gamma}(Q)\geq \mu(Q)$. Since $\gamma = \sup\{\nu_i(Q)\}$ and $\mu(Q)\in \Gamma_\Q$, we must have some $i\in I$ such that $\gamma>\nu_i(Q)\geq \mu(Q)$. However, by hypothesis $\mu\geq \nu_i$, hence $\mu(Q)=\nu_i(Q)$. Since $Q$ is unstable, we can take $j>i$ such that $\nu_j(Q)>\nu_i(Q)=\mu(Q)$, contradicting the fact that  $\mu\geq \nu_j$ for every $j$.  Hence,  $\mu(Q)\not\in \Gamma_\Q$.
	We conclude that $Q$ has minimal degree among the polynomials $g$ such that $\mu(g)\not \in \Gamma_\Q$. By \cite[Theorem 4.2]{NartKeyPolyValuedFields}, $Q$ is a key polynomial of minimal degree for $\mu$.  Moreover, $\mu(f)=\min_{0\leq j \leq r}\{\mu(f_j)+j\mu(Q)\}$ for every $f\in K[x]$.
	
	We also note that $\mu(Q)$ and $\gamma$ define the same cut in $\Gamma_\Q$. Indeed, 
	$$\alpha\in \gamma^L\Longrightarrow \alpha\leq \nu_i(Q)\leq \mu(Q) \Longrightarrow \alpha \in \mu(Q)^L $$
	and 
	$$\alpha \in \mu(Q)^L \Longrightarrow \alpha<\mu(Q)\leq \mu_{Q,\gamma}(Q)=\gamma \Longrightarrow \alpha \in \gamma^L. $$
	By \cite[Lemma 6.6]{Nart_1}, this means that $\gamma\sim_{\rm sme} \mu(Q)$. Since $\Gamma_{\rm sme}$ is defined taking only one representative for each class of the equivalence relation $\sim_{\rm sme}$, we must have $\gamma= \mu(Q)$. Hence, $\mu=\mu_{Q,\gamma}$.
	
\end{proof}

\subsubsection{Suprema in $\mathcal{T}_{\R_{{\rm sme}}}$}

Suppose we are working with families $\mathfrak{v}= \{\nu_i\}_{i\in I}$  in $$\mathcal{T}_{\R_{{\rm sme}}} := \{\nu\in \mathcal{T}(v,\R^{\mathbb{I}})\mid  \Gamma_\nu\subset \R^{\mathcal{I}_S}_{\rm lex}\text{ for some } S\in {\rm Init}(\mathcal{I}) \text{ and }\nu|_K=v  \}. $$

If all polynomials are $\mathfrak{v}$-stable, then the valuation $\sup \mathfrak{v}$ from Proposition \ref{lemVstable}  still works as a supremum for $\mathfrak{v}$. However, in the case where we can take $Q$ $\mathfrak{v}$-unstable with minimal degree, even if we took $\gamma\in (\Gamma_{\rm sme })_\infty$ as in Lemma \ref{lemSupAlgType}, it may happen that $\mu_{Q,\gamma}$ is not a supremum for the family. Indeed, in the proof of Theorem  \ref{teoSupinTsme}, we see that if a valuation $\mu\in \mathcal{T}_{\R_{{\rm sme}}}$ is such that $\mu\geq \nu_i$ for every $i\in I$ and $\mu\leq \mu_{Q, \gamma}$, then $\mu\sim \mu_{Q, \gamma}$. Since there may exist several representatives for a given equivalence class under the relation $\sim_{\rm sme}$,  it suggests that there could be a family $\{\nu_i\}_{i\in I}$  admitting in $\mathcal{T}_{\R_{{\rm sme}}}$  a strictly decreasing set of upper bounds without minimum element. 

Therefore, in $\mathcal{T}_{\R_{{\rm sme}}}$ we may consider an alternative definition for the supremum of an increasing family of valuations. 
\begin{Def}
	Let $\mathfrak{v}= \{\nu_i\}_{i\in I}$ be an increasing family of valuations  and $\mu\in\mathcal{T}_{\R_{{\rm sme}}}$. We say that $\mu$ is a $\boldsymbol{{\rm sme}}$\textbf{-supremum} of $\mathfrak{v}$ if $\nu_i\leq \mu$ for every $i\in I$ and if $\mu'\in \mathcal{T}_{\R_{{\rm sme}}}$  is such that $\nu_i\leq \mu'$ for every $i\in I$ and $\mu'\leq \mu$, then  $\mu\sim\mu'$. 
\end{Def}

\begin{Obs}
	If $\mu$ is the supremum for $\mathfrak{v}$, then it is a {\rm sme}-supremum for $\mathfrak{v}$. In particular, if all polynomials are $\mathfrak{v}$-stable, then the {\rm sme}-supremum of $\mathfrak{v}$ is the traditional supremum and hence is unique.
\end{Obs}

%
%
\begin{Exa}
	In the conditions of Lemma \ref{lemSupAlgType}, $\mu_{Q,\gamma}$ is a {\rm sme}-supremum for the family $\mathfrak{v}$. 
\end{Exa}

 We will prove that the {\rm sme}-supremum is unique up to equivalence of valuations.  Therefore, for an increasing family $\mathfrak{v}$ with $\mathfrak{v}$-unstable polynomials, the valuation $\mu_{Q,\gamma}$ from Lemma \ref{lemSupAlgType} is a canonical choice of representative for the class of {\rm sme}-suprema of $\mathfrak{v}$.  

\begin{Prop} For an increasing family $\mathfrak{v}\subset \mathcal{T}_{\R_{{\rm sme}}}$,	all  {\rm sme}-suprema for $\mathfrak{v}$ are equivalent. In particular, they are comparable.

\end{Prop}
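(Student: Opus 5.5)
Take two sme-suprema $\mu_1,\mu_2\in\mathcal{T}_{\R_{\rm sme}}$ of $\mathfrak{v}=\{\nu_i\}_{i\in I}$. The strategy is to show first that $\mu_1$ and $\mu_2$ are comparable, and then deduce equivalence directly from the definition. The "in particular" clause then also follows from Corollary \ref{CorEquivValAreComparable}, since equivalent valuations in $\mathcal{T}_{\R_{\rm sme}}$ are comparable.

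\emph{Comparability.} We mimic the proof of uniqueness of the ordinary supremum. Suppose $\mu_1\not\leq\mu_2$ and $\mu_2\not\leq\mu_1$. By Proposition \ref{LemExisteJoint}, applied in the tree $\mathcal{T}(v,\R^{\mathbb I})$ (which contains $\Gamma_\Q$), the infimum $\mu_1\wedge\mu_2$ exists and satisfies $\mu_1\wedge\mu_2<\mu_1$ and $\mu_1\wedge\mu_2<\mu_2$. By Remark \ref{obsTD} it is residue-transcendental, so its value group lies in $\Gamma_\Q\subset\R^{\mathcal I}_{\rm lex}$; hence $\mu_1\wedge\mu_2\in\mathcal{T}_{\R_{\rm sme}}$. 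Since $(-\infty,\mu_1]$ is totally ordered, each $\nu_i$ is comparable with $\mu_1\wedge\mu_2$, and there are two cases.
\begin{itemize}
\item If $\nu_j>\mu_1\wedge\mu_2$ for some $j$, then $\nu_j$ is a common lower bound of $\mu_1,\mu_2$ strictly above their infimum, which is absurd.
\item If $\nu_i\leq\mu_1\wedge\mu_2$ for all $i$, then $\mu_1\wedge\mu_2$ is an upper bound of $\mathfrak{v}$ in $\mathcal{T}_{\R_{\rm sme}}$ below $\mu_1$. The sme-supremum property gives $\mu_1\wedge\mu_2\sim\mu_1$.
\end{itemize}

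The main obstacle is deriving a contradiction in the second case, because equivalence is weaker than equality. The plan is to use the residue-transcendence of $\mu_1\wedge\mu_2$: its value group lies in $\Gamma_\Q$. By Lemma \ref{equi}, $\mu_1$ is then also residue-transcendental, hence $\Gamma_{\mu_1}\subset\Gamma_\Q$ by Remark \ref{obs2}. Remark \ref{equi_fecho} now gives $\mu_1=\mu_1\wedge\mu_2$, contradicting $\mu_1\wedge\mu_2<\mu_1$. (Alternatively one could invoke Proposition \ref{PropNartValEquivsme}: equivalent valuations share a minimal-degree key polynomial and agree below its degree, with ${\rm sv}$ values that are $\sim_{\rm sme}$-equivalent, and $\sim_{\rm sme}$ is equality on $\Gamma_\Q$.) Hence $\mu_1$ and $\mu_2$ are comparable.

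\emph{Equivalence.} Say $\mu_1\leq\mu_2$. Then $\mu_1$ is an upper bound of $\mathfrak{v}$ in $\mathcal{T}_{\R_{\rm sme}}$ with $\mu_1\leq\mu_2$. Since $\mu_2$ is an sme-supremum, the definition gives $\mu_1\sim\mu_2$, which completes the proof.
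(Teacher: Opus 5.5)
Your proof is correct, but it takes a different route from the paper's.

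\textbf{The paper's route.} It splits into the stable and unstable cases. In the unstable case it takes a limit key polynomial $Q$ and uses \cite[Lemma 4.5]{josneicaio2} to know two things about every sme-supremum $\mu_k$: it agrees with $\mathfrak{v}$ in degrees below $\deg Q$, and it satisfies $\mu_k(Q)>\nu_i(Q)$ for all $i$. It then builds the auxiliary upper bound $\mu_{Q,\mu_1(Q)}$, which lies below both suprema, and concludes $\mu_1\sim\mu_{Q,\mu_1(Q)}\sim\mu_2$. Comparability comes only afterwards, via Corollary \ref{CorEquivValAreComparable}, which rests on Nart's characterization of equivalence in Proposition \ref{PropNartValEquivsme}.

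\textbf{Your route.} You reverse the order. You first get comparability with the same infimum argument used for uniqueness of ordinary suprema. The one new case, $\mu_1\wedge\mu_2\sim\mu_1$, you dispose of as follows: $\mu_1\wedge\mu_2$ is residue-transcendental, and Lemma \ref{equi} with Remarks \ref{obs2} and \ref{equi_fecho} then forces $\mu_1\wedge\mu_2=\mu_1$. You also correctly checked that $\Gamma_\Q\subset\R^{\mathcal I}_{\rm lex}\subset\R^{\mathcal I_S}_{\rm lex}$, so $\mu_1\wedge\mu_2$ really lies in $\mathcal{T}_{\R_{\rm sme}}$. After that, equivalence follows from the definition in one line.

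\textbf{What each buys.} Your argument is more elementary and more general. It needs no stable/unstable dichotomy, no limit key polynomial, and no external lemma. It works for any set of valuations, not just increasing families, because each common lower bound of $\mu_1,\mu_2$ is comparable with $\mu_1\wedge\mu_2$ inside $(-\infty,\mu_1]$. The paper's approach gives structural information in exchange: it shows every sme-supremum is equivalent to the explicit valuation $\mu_{Q,\mu_1(Q)}$. That is what justifies calling $\mu_{Q,\gamma}$ the canonical representative of the class.
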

\begin{proof}		If all polynomials are $\mathfrak{v}$-stable, then the result follows. 
	Suppose $Q$ is a polynomial $\mathfrak{v}$-unstable of minimal degree.  Let $\mu_1$ and $\mu_2$ be {\rm sme}-suprema for $\mathfrak{v}$.
	Without lost of generality, suppose $\mu_1(Q)\leq \mu_2(Q)$. By \cite[Lemma 4.5]{josneicaio2}, $\nu_i(Q)<\mu_1(Q)$ and $\nu_i(Q)<\mu_2(Q)$ for every $i\in I$. We use Proposition \ref{propMuQGamma} to construct two valuations:
	$$\mu_{Q,\mu_1(Q)} \text{ and } \mu_{Q,\mu_2(Q)}. $$
	Both $\mu_{Q,\mu_1(Q)}$ and $\mu_{Q,\mu_2(Q)}$ are upper bounds to $\mathfrak{v}$.  By \cite[Lemma 4.5]{josneicaio2}, we conclude that $$\mu_1(f)=\mu_2(f)=\mu_{Q,\mu_1(Q)}(f)=\mu_{Q,\mu_2(Q)}(f)=\nu_{i_f}(f)$$ for polynomials $f$ with $\deg(f)<\deg(Q)$, since they are $\mathfrak{v}$-stable.  Then,	looking to the action on $Q$-expansions, we see that 
	$$\mu_1\geq \mu_{Q,\mu_1(Q)} \text{ and } \mu_2\geq  \mu_{Q,\mu_2(Q)} \geq \mu_{Q,\mu_1(Q)}.$$
	Since $\mu_1$ and $\mu_2$ are {\rm sme}-suprema, we must have $\mu_1\sim  \mu_{Q,\mu_1(Q)}$ and $\mu_2\sim \mu_{Q,\mu_1(Q)}$. By transitivity and reflexivity of the equivalence relation, we obtain that $\mu_1\sim \mu_2$. By Corollary \ref{CorEquivValAreComparable}, we have $\mu_1\leq \mu_2$.

\end{proof}

\subsection{Suprema and limits of increasing families of valuations}

Let $(Y, \tau)$ be a topological space and $I$ a directed set. Consider $\{y_i\}_{i\in I}\subset Y$ a subset indexed by $I$. We say that a point $y\in Y$ is a \textbf{limit} for $\{y_i\}_{i\in I}$ in $Y$   if for every open neighborhood $U$ of $y$ there exists $i_y\in I$ such that $y_j\in U$ for every $j\geq i_y$, $j\in I$. We will use the  notation $\lim_{i\in I}y_i \to y$ in general and  $\lim_{i\in I}y_i = y$ when $y$ is the only limit for $\{y_i\}_{i\in I}$ (which happens, for example, when $\tau$ is Hausdorff).

For $\mathfrak{v}=\{\nu_i\}_{i\in I}\subset \mathcal{T}(v,\Lambda)$ admitting a supremum, we want to see  if $\lim_{i\in I}\nu_i$ exists and its relationship to $\sup_{i\in I}\nu_i$.
If $ \mathcal{T}$ is endowed with the Scott topology, then we have the following  immediate result.

\begin{Prop}\label{PropSupLimScott}
	We have $\lim_{i\in I}\nu_i \to \sup_{i\in I}\nu_i $  when we consider $\mathcal{T}$ with the Scott topology.

	\end{Prop}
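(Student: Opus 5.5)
Write $\mu:=\sup_{i\in I}\nu_i$. The plan is to check the definition of limit directly against Scott open sets, using only the inaccessibility-by-directed-joins axiom and the upper-set axiom. Fix a Scott open set $\mathcal{O}$ of $(\mathcal{T},\leq)$ with $\mu\in\mathcal{O}$. We must produce $i_0\in I$ such that $\nu_j\in\mathcal{O}$ for every $j\geq i_0$.

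\emph{Step 1: $\mathfrak{v}$ is a directed set with supremum $\mu$.} The family $\mathfrak{v}=\{\nu_i\}_{i\in I}$ is totally ordered and nonempty, so any two of its elements have an upper bound in $\mathfrak{v}$, namely the larger one. The paper's notion of supremum says that $\mu$ is an upper bound and that no upper bound $\mu'\leq\mu$ differs from $\mu$. The Scott axiom, however, refers to the order-theoretic least upper bound. So we must check that $\mu\leq\mu'$ for every upper bound $\mu'$.

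Let $\mu'$ be an upper bound and set $\rho:=\mu\wedge\mu'$, which exists by Proposition \ref{LemExisteJoint}. Since $\rho\leq\mu$ and $(-\infty,\mu]$ is totally ordered, $\rho$ is comparable with every $\nu_i$. There are two cases.
\begin{itemize}
\item If $\rho<\nu_j$ for some $j$, then $\nu_j$ is a common lower bound of $\mu$ and $\mu'$ strictly above $\rho$. This contradicts the maximality of $\rho$ in $(-\infty,\mu]\cap(-\infty,\mu']$.
\item Hence $\nu_i\leq\rho$ for all $i$. Since $\rho\leq\mu$ and $\mu$ is a supremum, $\rho=\mu$, that is, $\mu\leq\mu'$.
\end{itemize}
This is the same argument as in the uniqueness lemma, and it shows that $\mu$ is the least upper bound.

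\emph{Step 2: apply the two Scott axioms.} Since $\mathfrak{v}$ is directed with supremum $\mu\in\mathcal{O}$, inaccessibility gives some $i_0\in I$ with $\nu_{i_0}\in\mathcal{O}$. For $j\geq i_0$ we have $\nu_{i_0}\leq\nu_j$, so the upper-set property gives $\nu_j\in\mathcal{O}$. This is exactly $\lim_{i\in I}\nu_i\to\mu$.

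\emph{Indexing.} The index set $I$ is directed by the order it inherits from $\mathfrak{v}$, via $i\leq j$ iff $\nu_i\leq\nu_j$, so the notion of limit applies. The only point needing care is Step 1, reconciling the paper's ``minimal upper bound'' definition of supremum with the least upper bound required by the Scott axiom. The comparison via $\mu\wedge\mu'$ handles this.
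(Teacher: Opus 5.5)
Your proof is correct and follows the paper's argument: the totally ordered family is directed, Scott inaccessibility gives some $\nu_{i_0}\in\mathcal{O}$, and the upper-set property then puts every $\nu_j$ with $j\geq i_0$ in $\mathcal{O}$. Your Step 1 adds something the paper uses without comment: its ``supremum'' is only defined as a minimal upper bound, while the Scott axiom needs the least upper bound, and your $\mu\wedge\mu'$ argument (the same as in the uniqueness lemma) correctly shows that these coincide here.
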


\begin{proof}Take $U$ any open set in the Scott topology such that $\sup\mathfrak{v}\in U$. Since $\{\nu_i\}_{i\in I}$ is a directed set (because it is totally ordered) with its supremum in $U$, by the definition of Scott open set we must have $\{\nu_i\}_{i\in I}\cap U\neq \emptyset$. Since $U$ is an upper set, it follows that $\lim_{i\in I}\nu_i \to \sup_{i\in I}\nu_i $.
	
	\end{proof}

A stronger result is true for the  weak tree topology in $\mathcal{T}$, as we see in the following.

\begin{Lema}
    Suppose $\lim_{i\in I}\nu_i\to \nu$ when we consider $\mathcal{T}$ with the weak tree topology. Then, $\nu \geq \nu_i$ for every $i\in I$.
\end{Lema}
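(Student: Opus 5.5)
I argue by contradiction. Suppose there is $j\in I$ with $\nu\not\geq\nu_j$, so $\nu_j\not\leq\nu$. The idea is to use the subbasic open set $\mathcal{B}_{\nleq}(\nu_j)=[\eta]_{\nu_j}$ from Lemma~\ref{ConjB}. This set consists of all $\rho\in\mathcal{T}$ with $\nu_j\nleq\rho$. It is a subbasic open set of the weak tree topology, and by assumption it contains $\nu$. It is nonempty and well defined because $\nu\neq\nu_j$: indeed $\nu_j\not\leq\nu$ forces $\nu\neq\nu_j$. We can therefore take $\eta=\nu$, and $[\nu]_{\nu_j}=\mathcal{B}_{\nleq}(\nu_j)$ is an open neighbourhood of $\nu$.

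Since $\lim_{i\in I}\nu_i\to\nu$, there is an index $i_\nu\in I$ such that $\nu_k\in[\nu]_{\nu_j}$ for every $k\geq i_\nu$. The family is totally ordered, hence directed, so I can choose $k\in I$ with $k\geq i_\nu$ and $k\geq j$. Then $\nu_j\leq\nu_k$, because the family is increasing. On the other hand, $\nu_k\in\mathcal{B}_{\nleq}(\nu_j)$ means $\nu_j\nleq\nu_k$. This is a contradiction, and hence $\nu_j\leq\nu$ for every $j\in I$.

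The only step needing care is identifying $[\nu]_{\nu_j}$ with $\{\rho\mid \nu_j\nleq\rho\}$. This identification is exactly Lemma~\ref{ConjB} applied with $\mu=\nu_j$: it gives $\rho\sim_{\nu_j}\nu$ iff $\nu_j\nleq\rho$. We must also check that $\nu$ lies in this class, which holds since $\nu\neq\nu_j$ and $\nu_j\nleq\nu$. There is one more small point: the indexing of the family should be compatible with the order, so that $k\geq j$ gives $\nu_k\geq\nu_j$. This holds with the convention used throughout for increasing families $\{\nu_i\}_{i\in I}$. No density hypothesis on $\Lambda$ is needed.
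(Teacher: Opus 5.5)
Your proposal is correct and follows essentially the same argument as the paper. You use the same subbasic neighbourhood $[\nu]_{\nu_j}$ and get the same contradiction for indices $k\geq j$. The only difference is that you invoke Lemma~\ref{ConjB} to identify $[\nu]_{\nu_j}$ with $\mathcal{B}_{\nleq}(\nu_j)$. This handles in one stroke the two cases ($\nu$ and $\nu_j$ incomparable, or $\nu<\nu_j$) that the paper checks separately by computing $\nu\wedge\nu_i=\nu\wedge\nu_j$ and showing $\nu_j\in[\nu,\nu_i]$.
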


\begin{proof}Suppose there exists $j\in I$ such that either $\nu$ and $\nu_j$ are incomparable or $\nu<\nu_j$. In both cases, consider the open sub-basic set $[\nu]_{\nu_j}.$ We see that $\nu\in[\nu]_{\nu_j}.$  
\begin{itemize}
    \item If $\nu$ and $\nu_j$ are incomparable, then $\nu_i\not\in [\nu]_{\nu_j}$ for every $i>j$, since $\nu_i>\nu_j$ implies that $\nu\wedge \nu_i=\nu\wedge \nu_j$ and then $\nu_j\in [\nu,\nu_i].$

    \item If $\nu<\nu_j$, then also $\nu_i\not\in [\nu]_{\nu_j}$ for every $i>j$, since $\nu_j\in [\nu,\nu_i]. $
\end{itemize}

Hence, in both cases, we contradict the assumption $\lim_{i\in I}\nu_i\to \nu$. Therefore, $\nu$ is an upper bound of $\{\nu_i\}_{i\in I}.$

\end{proof}

\begin{Prop}\label{propLimToSup}
    We have $\lim_{i\in I}\nu_i = \sup_{i\in I}\nu_i $  when we consider $\mathcal{T}$ with the weak tree topology. 
\end{Prop}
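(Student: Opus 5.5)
Let $\nu=\sup_{i\in I}\nu_i$, which we assume exists, so $\nu_i<\nu$ for every $i$ (the family has no maximum). The plan has two parts. First, show that $\nu$ is a limit of $\{\nu_i\}$ in the weak tree topology $\mathfrak{W}$. Second, deduce uniqueness of the limit from the Hausdorff property of $\mathfrak{W}$ (Remark \ref{diferencaWS}). That property needs density of $\Lambda$; without it, uniqueness will instead come from the preceding Lemma together with the uniqueness of suprema.

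\textbf{Convergence to $\nu$.} It suffices to handle sub-basic open sets $[\eta]_\mu$ containing $\nu$. For a finite intersection of such sets we then take the maximum of the finitely many indices, which is possible since $I$ is totally ordered. So fix $\mu\neq\nu$ with $\nu\in[\eta]_\mu$, i.e.\ $[\eta]_\mu=[\nu]_\mu$, and split into cases according to Proposition \ref{carc}.
\begin{itemize}
\item If $\mu\not\leq\nu$, then $\mu\not\leq\nu_i$ for all $i$, since $\nu_i\leq\nu$. By Lemma \ref{ConjB}, every $\nu_i$ lies in $[\nu]_\mu=\mathcal{B}_{\nleq}(\mu)$.
\item If $\mu<\nu$, then $\mu$ and every $\nu_i$ lie in the totally ordered set $(-\infty,\nu]$, so they are comparable. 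If some $\nu_{i_0}>\mu$, then for $j\geq i_0$ we get $\mu<\nu_j\leq\nu$. Hence $\nu_j\wedge\nu=\nu_j>\mu$, so $\mu\notin[\nu_j,\nu]$ and $\nu_j\in[\nu]_\mu$. Otherwise $\nu_i\leq\mu$ for all $i$, so $\mu$ is an upper bound of the family with $\mu<\nu$, contradicting the definition of supremum.
\end{itemize}
Thus in every case the tail of the family lies in the open set, and $\lim_{i\in I}\nu_i\to\nu$.

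\textbf{Uniqueness.} Suppose $\lim_{i\in I}\nu_i\to\nu'$. By the preceding Lemma, $\nu'$ is an upper bound of the family. We want $\nu'=\nu$.
\begin{itemize}
\item If $\nu'$ is incomparable to $\nu$, put $\rho=\nu\wedge\nu'$. Since $\nu_i\leq\nu$ and $\nu_i\leq\nu'$, we have $\nu_i\leq\rho$ for all $i$, and $\rho<\nu$, again contradicting that $\nu$ is the supremum.
\item If $\nu'<\nu$, the same contradiction arises with $\nu'$ itself as the upper bound.
\item So the remaining case is $\nu<\nu'$. Take the open set $[\nu']_\nu$, which contains $\nu'$. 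Every $\nu_i$ satisfies $\nu_i<\nu<\nu'$, so $\nu\in[\nu_i,\nu']$ and $\nu_i\notin[\nu']_\nu$. Hence no tail of the family lies in this neighbourhood of $\nu'$, contradicting convergence to $\nu'$.
\end{itemize}
Therefore $\nu'=\nu$, and the limit is unique without even invoking density.

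\textbf{Main obstacle.} The only delicate point is the case $\mu<\nu$ in the convergence step: one must rule out $\mu$ lying above the whole family. This is precisely where the supremum property is used, and it is handled as above by comparability inside $(-\infty,\nu]$.
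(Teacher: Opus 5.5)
Your proof is correct and follows the paper's route. For convergence, you check sub-basic sets $[\nu]_\mu$ via the split of Proposition~\ref{carc}: the case $\mu\not\leq\sup_{i\in I}\nu_i$, and the case $\mu<\sup_{i\in I}\nu_i$, where minimality of the supremum forces some $\nu_j>\mu$. For uniqueness, you use the preceding Lemma together with an open set $[\nu']_\mu$, where $\mu$ is an upper bound strictly below the limit $\nu'$.

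The differences are cosmetic. You treat finite intersections of sub-basic sets explicitly. And instead of showing that any limit is a supremum and then citing uniqueness of suprema, you compare the limit directly with $\sup_{i\in I}\nu_i$, which inlines the meet argument from that uniqueness lemma.
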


\begin{proof}We first prove that $\lim_{i\in I}\nu_i \to \sup_{i\in I}\nu_i$. 
Let $[\nu]_\mu$ be a sub-basic open set and suppose $\sup_{i\in I}\nu_i\in [\nu]_\mu$. By Proposition \ref{carc}, we consider two cases.

If $[\nu]_{\mu}=[\nu]_{\text{tan}}$, then $\sup_{i\in I}\nu_i>\mu$. Hence, there exists $j \in I$ such that $\mu\leq\nu_j<\sup_{i\in I}\nu_i$. Thus, since for all $i>j$ we have $\mu\leq\nu_j<\nu_i<\sup_{i\in I}\nu_i$, it follows that $\nu_i\in[\sup_{i\in I}\nu_i]_{\text{tan}}=[\nu]_{\text{tan}}=[\nu]_\mu$ for all $i>j$.
        
If $[\nu]_\mu=\mathcal{B}_{\nleq}(\mu)$, then $\mu \not\leq\sup_{i\in I}\nu_i$. We have $\mu\nleq\nu_i$ for all $i \in I$. Indeed, if there existed some $i \in I$ such that  $\mu\leq\nu_i$, then we would have $\mu\leq\nu_i<\sup_{i\in I}\nu_i$, which cannot happen. Therefore, $\nu_i\in \mathcal{B}_{\nleq}(\mu)=[\nu]_\mu$ for all $i \in I$.

Thus, $\lim_{i\in I}\nu_i \to \sup_{i\in I}\nu_i$. For the uniqueness of the limit, suppose $\nu\in \mathcal{T}$ is such that $\lim_{i\in I}\nu_i \to \nu.$ 

As we saw in the above lemma,  $\nu_i\leq \nu$ for  every $i\in I$. Suppose $\mu\in \mathcal{T}$ is such that $\nu_i\leq \mu\leq \nu$ for every $i\in I$. If $\mu<\nu$, then when we consider the open set $[\nu]_\mu$ we would have $\nu\in [\nu]_\mu $ and $\nu_i\not\in [\nu]_\mu$ for every $i\in I$. This contradicts $\lim_{i\in I}\nu_i \to \nu.$ Hence, $\mu=\nu$ and we conclude that $\nu=\sup_{i\in I}\nu_i$. 
Therefore,  $\lim_{i\in I}\nu_i = \sup_{i\in I}\nu_i$.

 
\end{proof}

\section{The tree $\mathcal{T}(v,\Lambda)$ as a topological subspace of the product $(\Lambda_\infty)^{K[x]}$}\label{SecClosenessCriterion}

\subsection{A closeness criterion}

Let $R$ be a fixed ring and $\Lambda$ a fixed ordered abelian group. 
Consider the product topology on $(\Lambda_{\infty})^{R}$ induced by a given topology on $\Lambda_{\infty}$. 
The set $\mathcal{V}_{R}(\Lambda)$ of all valuations on $R$ taking values in $\Lambda_{\infty}$ is a subset of $(\Lambda_{\infty})^{R}$.  The following result provides a criterion for determining whether $\mathcal{V}_{R}(\Lambda)$ is closed in $(\Lambda_{\infty})^{R}$.   

\begin{Teo} \label{criterio}
	Let $\Lambda'$ be any submonoid of $\Lambda_{\infty}$ and take a topology $\mathfrak{U}$ on $\Lambda'$ such that
	\begin{description}
		\item [(P1)] the addition $+ : \Lambda'\times\Lambda'\longrightarrow\Lambda'$ is continuous, and
		\item [(P2)] for every $\gamma, \gamma' \in \Lambda'$ such that $\gamma<\gamma'$, there exist open sets $U, U' \in \mathfrak{U}$, such that $\gamma \in U, \gamma' \in U'$ and $U < U'$ (i.e., $u<u'$ for every $u \in U$ and $u'\in U$).
	\end{description}
	Then the set $\mathcal{V}_{R}(\Lambda')$ of valuations of $R$ taking values in $\Lambda'$ is closed in $(\Lambda')^R$.
\end{Teo}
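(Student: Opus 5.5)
We show that the complement of $\mathcal{V}_{R}(\Lambda')$ in $(\Lambda')^R$ is open, that is, that every $F\in(\Lambda')^R$ failing one of the axioms \textbf{(V1)}, \textbf{(V2)}, \textbf{(V3)} has a product-open neighbourhood consisting entirely of maps that fail the same axiom. Recall that the basic open sets of $(\Lambda')^R$ restrict finitely many coordinates to open subsets of $\Lambda'$; since each axiom involves only finitely many elements of $R$, this is exactly the kind of neighbourhood we need. A preliminary observation: (P2) makes $\mathfrak U$ Hausdorff, because distinct points of the totally ordered $\Lambda'$ are comparable and the sets $U<U'$ given by (P2) are disjoint. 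Consequently singletons are closed.

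For \textbf{(V3)}, suppose $F(1)\neq 0$ (note $0,\infty$ may or may not lie in $\Lambda'$; if $0\notin\Lambda'$ then no element of $(\Lambda')^R$ satisfies (V3) and the set is empty, hence closed, so assume the relevant constants lie in $\Lambda'$). By Hausdorffness there is an open $U\ni F(1)$ with $0\notin U$, and $\{G: G(1)\in U\}$ is an open neighbourhood of $F$ avoiding $\mathcal V_R(\Lambda')$. The case $F(0)\neq\infty$ is identical.

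For \textbf{(V1)}, suppose $F(ab)\neq F(a)+F(b)$ for some $a,b\in R$. Take disjoint open sets $W\ni F(ab)$ and $W'\ni F(a)+F(b)$. By (P1), addition is continuous, so there are opens $U_a\ni F(a)$ and $U_b\ni F(b)$ with $U_a+U_b\subseteq W'$. Every $G$ with $G(a)\in U_a$, $G(b)\in U_b$ and $G(ab)\in W$ then violates (V1). If $a=b$ or $ab\in\{a,b\}$, the coordinates coincide; in that case intersect the corresponding open sets, which still contain the relevant value of $F$.

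For \textbf{(V2)}, suppose $F(a+b)<\min\{F(a),F(b)\}$, say the minimum is $F(a)$, so $F(a+b)<F(a)\le F(b)$. Apply (P2) to the pair $F(a+b)<F(a)$ to get opens $U<U_1$, and to the pair $F(a+b)<F(b)$ to get opens $U'<U_2$. Set $V=U\cap U'$. Every $G$ with $G(a+b)\in V$, $G(a)\in U_1$ and $G(b)\in U_2$ satisfies $G(a+b)<\min\{G(a),G(b)\}$. Coinciding indices are again handled by intersecting, and this cannot produce a contradiction: if, say, $a+b=a$ then $F(a+b)=F(a)$, contrary to the assumed strict inequality, so such collisions only arise between consistent constraints.

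The main obstacle is this bookkeeping of coincident indices, together with (P2) being needed only in its strict form. Both are routine: the complement of $\mathcal V_R(\Lambda')$ is the union of the open sets constructed above, so $\mathcal V_R(\Lambda')$ is closed.
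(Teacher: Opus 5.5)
Your proposal is correct and follows essentially the same route as the paper. You show that the complement is open by treating the failures of \textbf{(V1)}, \textbf{(V2)}, \textbf{(V3)} separately, using Hausdorffness from \textbf{(P2)} together with \textbf{(P1)} for \textbf{(V1)}, two applications of \textbf{(P2)} intersected at the coordinate $a+b$ for \textbf{(V2)}, and closedness of singletons for \textbf{(V3)}. Your bookkeeping for coincident indices is harmless but unnecessary: a neighbourhood of the form $\pi_a^{-1}(V)\cap\pi_b^{-1}(V')\cap\pi_{ab}^{-1}(W)$ already intersects the constraints on any coinciding coordinates and still contains $F$.
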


\begin{proof}
	We will prove that $(\Lambda_{\infty})^R\setminus\mathcal{V}_{R}(\Lambda')$ is an open set in the product topology. Take a function $f:R\longrightarrow\Lambda_{\infty}$ which is not a valuation. Then one of the three axioms \textbf{(V1)}, \textbf{(V2)} or \textbf{(V3)} does not hold for $f$. We will treat each case separately. To do this, consider the projection $\pi_a: (\Lambda')^R\longrightarrow\Lambda'$ given by $\pi_a(f):=f(a)$ for all $a \in R$, which is naturally continuous in the product topology.
	
	If $\textbf{(V1)}$ does not hold, then $f(ab)\neq f(a) + f(b)$ for some $a, b \in R$. The property \textbf{(P2)} implies that $\mathfrak U$ is Hausdorff, so there exist $U,W\in\mathfrak U$ such that $f(a)+f(b)\in U$, $f(ab)\in W$ and $U\cap W=\emptyset$. By \textbf{(P1)} there exist $V,V'\in\mathfrak U$ with $f(a)\in V$ and $f(b)\in V'$ such that $V+V'\subseteq U$. Take the open set given by
	\[
	O=\pi_a^{-1}(V)\cap\pi_b^{-1}(V')\cap\pi_{ab}^{-1}(W).
	\]
	Clearly $f\in O$. Take any element $g\in O$ and let's prove that $g$ is not a valuation. Since $g(a)\in V$ and $g(b)\in V'$ we must have $g(a)+g(b)\in V+V'\subseteq U$. Also, $g(ab)\in W$ which means that $g(ab)\neq g(a)+g(b)$ because $U\cap W=\emptyset$. Hence, $g$ is not a valuation.
	
	If \textbf{(V2)} does not hold, then $f(a+b)<\min\{f(a), f(b)\}$ for some $a, b \in R$. In this case we have $f(a+b)< f(a)$ and $f(a+b)< f(b)$. By property \textbf{(P2)} we have there exist open sets $U,U',W,W'\in\mathfrak U$ such that $U < W, U' < W', f(a)\in W, f(b)\in W'$ and $f(a+b)\in U\cap U'$. Take now
	\[
	O=\pi_a^{-1}(W)\cap\pi_b^{-1}(W')\cap\pi_{a+b}^{-1}(U\cap U').
	\]
	Again we have $f\in O$. If $g\in O$ we have $g(a+b) <\min\{g(a),g(b)\}$ which means that $g$ is not a valuation.
	
	Finally, if \textbf{(V3)} does not hold, then $f(1)\neq 0$ or $f(0)\neq \infty$. Assume that $f(1)\neq 0$. Since $\mathfrak{U}$ is Hausdorff the set $\Lambda'\setminus \{0\}$ is open. Take the set $O=\pi_{1}^{-1}(\Lambda'\setminus\{0\})$. Then $f \in O$ and $O\cap \mathcal{V}_{R}(\Lambda')=\emptyset$. The case of $f(0)\neq\infty$ is treated analogously.
\end{proof}

\subsection{The induced order topology in $\mathcal{V}_R(\Lambda)$}


Since $\Lambda$ is a totally ordered set, it carries the order topology. Among the various natural extensions of this topology to $\Lambda_\infty$, we will focus on one in particular.

\begin{Def}
	For a totally ordered set $Y$, the \textbf{order topology} is defined as the topology generated by the sets of the form
	$$\{y \in Y\mid y>y_0\}\text{ and }\{y \in Y{ }|\text{ }y<y_0\}$$
	where $y_0$ runs through $Y$. We denote by $Y_{\infty}$ the set $Y\cup\{\infty\}$ where $\infty$ is an element not belonging to $Y$ and extend the order from $Y$ to $Y_{\infty}$ by setting $\infty>y$ for every $y \in Y$. In this manner, $Y_{\infty}$ is a totally ordered set and hence we can talk about the order topology on $Y_{\infty}$. A neighbourhood basis of $\infty$ in this topology is given by
\[
\{y \in Y_{\infty} \mid y > y_0\},
\]
with $y_0$ running through $Y$.
\end{Def}


\begin{Lema} \label{order}
	The properties \textbf{(P1)} and \textbf{(P2)} hold for the order topology.
\end{Lema}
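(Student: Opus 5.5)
We have to check \textbf{(P1)} and \textbf{(P2)} for the order topology on $\Lambda_\infty$. By Theorem \ref{criterio} it suffices to work with $\Lambda'=\Lambda_\infty$, but the same arguments restrict to any submonoid. We use open intervals $(a,b)=\{y\mid a<y<b\}$ and rays $(a,\infty]=\{y\mid y>a\}$ and $[\,\cdot,b)$. These form a basis of the order topology, with the convention that if $\Lambda$ has no least element, every point has a two-sided interval neighbourhood.

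We start with \textbf{(P2)}, since it is the easier property. Let $\gamma<\gamma'$ in $\Lambda_\infty$; then $\gamma\in\Lambda$. There are two cases. If some $\alpha$ satisfies $\gamma<\alpha<\gamma'$, we take $U=\{y<\alpha\}$ and $U'=\{y>\alpha\}$. If no such $\alpha$ exists, we take $U=\{y<\gamma'\}$ and $U'=\{y>\gamma\}$. Then every $u\in U$ satisfies $u\le\gamma$ and every $u'\in U'$ satisfies $u'\ge\gamma'$, so $U<U'$ in both cases.

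Next comes \textbf{(P1)}: we show that $(\alpha,\beta)\mapsto\alpha+\beta$ is continuous at each point. We split by where the point lies.

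\emph{(i) $\alpha,\beta\in\Lambda$.} Take a basic neighbourhood $(a,b)$ of $\alpha+\beta$, allowing $b=\infty$ or the ray variants. We need neighbourhoods $V\ni\alpha$ and $V'\ni\beta$ with $V+V'\subseteq(a,b)$. Set $\varepsilon=\min\{\alpha+\beta-a,\ b-\alpha-\beta\}>0$.
\begin{itemize}
\item If $\Lambda$ has a positive element $\delta$ with $2\delta\le\varepsilon$ (for instance when $\Lambda$ is dense, take $\delta$ with $0<\delta<\varepsilon$ and halve via density as needed), take $V=(\alpha-\delta,\alpha+\delta)$ and $V'=(\beta-\delta,\beta+\delta)$.
\item Otherwise $\Lambda$ is discrete near $0$ with a least positive element $1$. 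In that case, take $V=(\alpha-1,\alpha+1)=\{\alpha\}$ and $V'=\{\beta\}$, which are open singletons.
\end{itemize}
The main obstacle is making the halving step rigorous in a general ordered group, where $\varepsilon/2$ may not exist. We will argue as follows. Either there is $\delta$ with $0<\delta<\varepsilon$ and $\delta\le\varepsilon-\delta$, and then the first case works. Or every $0<\delta<\varepsilon$ satisfies $2\delta>\varepsilon$. In the second situation we choose $V=(\alpha-\delta,\alpha+\delta)$ and $V'=(\beta-(\varepsilon-\delta),\beta+(\varepsilon-\delta))$, using an asymmetric split of $\varepsilon$. If no $\delta$ with $0<\delta<\varepsilon$ exists at all, then $\varepsilon$ is the least positive element and singletons are open.

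\emph{(ii) Points involving $\infty$.} Suppose $\alpha=\infty$, so the sum is $\infty$, and take a neighbourhood $\{y>c\}$ of it. If $\beta\in\Lambda$, choose $V'=(\beta-d,\beta+d)$ for some $d>0$ (or $\{\beta\}$ in the discrete case), and $V=\{y>c+\beta+d\}$. If $\beta=\infty$ as well, take $V=V'=\{y>\max(c,0)\}$, since a sum of two elements greater than $\max(c,0)\ge 0$ exceeds $c$.

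Finally, when applying the result to a submonoid $\Lambda'$, we note that all the neighbourhoods above are intersected with $\Lambda'$ and the inclusions persist, so both properties hold for the order topology.
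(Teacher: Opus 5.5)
Your route is the paper's: the same two-case choice of $U,U'$ for \textbf{(P2)}, and for \textbf{(P1)} a split of the tolerance around the sum into two positive pieces (the paper's $\alpha=\alpha_1+\alpha_2$, your $\varepsilon=\delta+(\varepsilon-\delta)$). You also use open singletons in the discrete case and rays at $\infty$. Your justification of the split is more careful than the paper's, which only asserts that $\alpha_1,\alpha_2$ exist. Either some $\delta$ with $0<\delta<\varepsilon$ exists, and then the asymmetric split works outright, so your sub-case with $2\delta\le\varepsilon$ is redundant. Or $\varepsilon$ is the least positive element, and singletons are open.

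The genuine error is the choice of $V$ in step (ii). Take $V'=(\beta-d,\beta+d)$ and $V=\{y>c+\beta+d\}$. Then finite $v\in V$, $v'\in V'$ only satisfy $v+v'>(c+\beta+d)+(\beta-d)=c+2\beta$, which does not force $v+v'>c$ when $\beta<0$. For instance, in $\Lambda=\Z$ with $c=0$, $\beta=-5$, $d=1$, you get $V'=\{-5\}$ and $-3\in V$. Yet $-3+(-5)=-8<0$, so $V+V'\not\subseteq\{y>0\}$.

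The threshold must be $c-\beta+d$. With $V=\{y>c-\beta+d\}$ you get $v+v'>c$. This is exactly the paper's choice $V=\{\alpha'>\alpha-\gamma+\beta\}$, $V'=\{\alpha'>\gamma-\beta\}$.

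Two smaller points:
\begin{itemize}
\item In \textbf{(P2)}, the inequalities $u\le\gamma$ and $u'\ge\gamma'$ hold only in the second case. In the first case, $U<U'$ holds because of $\alpha$.
\item Your closing remark about submonoids is correct for the subspace topology induced from $\Lambda_\infty$, which is what intersecting neighbourhoods with $\Lambda'$ gives. It fails for the intrinsic order topology of $\Lambda'$: for $\Lambda'=\{0\}\cup(1,\infty)\cup\{\infty\}\subset\R_\infty$, addition is then discontinuous at $(0,0)$.
\end{itemize}
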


\begin{proof}
	Take $\gamma,\gamma'\in \Lambda$ such that $\gamma<\gamma'$. If there is an element $\alpha\in(\gamma,\gamma')$ we take
	\[
	U=(-\infty,\alpha)\textrm{ and }U'=(\alpha,\infty).
	\]
	If $(\gamma,\gamma')=\emptyset$ we take
	\[
	U=(-\infty,\gamma')\textrm{ and }U'=(\gamma,\infty).
	\]
	In each case we have $\gamma\in U<U'\ni\gamma'$. Therefore, \textbf{(P2)} holds for the order topology.
	
	In order to have \textbf{(P1)} we must show that for any $\gamma,\gamma'\in\Lambda_\infty$ and $U$ is an open set in the order topology, if $\gamma+\gamma'\in U$ then there exist open sets $V$ and $V'$ with $\gamma\in V$ and $\gamma'\in V'$ such that $V+V'\subseteq U$.
	
	First, consider the case where $\gamma\neq\infty\neq\gamma'$. If the order topology is discrete we just take $V=\{\gamma\}$ and $V'=\{\gamma'\}$. On the other case, take $\alpha,\beta\in\Lambda$ with $\alpha,\beta>0$ such that
	\[
	\gamma+\gamma'\in (\gamma+\gamma'-\alpha,\gamma+\gamma'+\beta)\subseteq U.
	\]
	There exist $\alpha_1,\alpha_2,\beta_1,\beta_2\in\Lambda$ such that
	\[
	\alpha_1,\alpha_2,\beta_1,\beta_2>0\textrm{ and }\alpha_1+\alpha_2=\alpha\textrm{ and }\beta_1+\beta_2=\beta.
	\]
	Consider now the open sets
	\[
	V=(\gamma-\alpha_1,\gamma+\beta_1)\textrm{ and }V'=(\gamma'-\alpha_2,\gamma'+\beta_2).
	\]
	Therefore,
	\[
	V+V'\subseteq (\gamma+\gamma'-\alpha_1-\alpha_2,\gamma+\gamma'+\beta_1+\beta_2)\subseteq U.
	\]
	
	It remains to prove that given any open neighbourhood $U$ of $\infty$ and any $\gamma\in\Lambda_{\infty}$ then there exist open sets $V$ and $V'$ with $\infty\in V$ and $\gamma\in V'$ such that $V+V'\in U$. Since $U$ is a neighbourhood of $\infty$ there exists $\alpha\in \Lambda$ such that $\{\alpha'\in\Lambda_\infty\mid \alpha'>\alpha\}\subseteq U$. If $\gamma=\infty$ we just take
	\[
	V=\{\alpha'\in\Lambda_\infty\mid\alpha'>\alpha\} \textnormal{ and }V'=\{\alpha'\in\Lambda_\infty\mid\alpha'>0\}
	\]
	and if $\gamma\neq\infty$ we just take any $\beta>0$ and define $V=\{\alpha'\in\Lambda_\infty\mid\alpha'>\alpha-\gamma+\beta\}$ and $V'=\{\alpha'\in\Lambda_\infty\mid\alpha'>\gamma-\beta\}$. In any case we have $V+V'\subseteq U$.
\end{proof}

As a consequence of Theorem \ref{criterio} and Lemma \ref{order} we obtain:

\begin{Cor}\label{fechado}
	The set $\mathcal{V}_{R}(\Lambda')$ is closed in $(\Lambda_{\infty})^R$ if we take the order topology on $\Lambda_{\infty}$.
\end{Cor}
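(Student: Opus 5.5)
The plan is to apply Theorem \ref{criterio} directly, with $\Lambda'=\Lambda_\infty$ (a submonoid of itself under the extended addition) and $\mathfrak U$ the order topology on $\Lambda_\infty$. Lemma \ref{order} shows that \textbf{(P1)} and \textbf{(P2)} hold for this topology. Hence $\mathcal{V}_R(\Lambda_\infty)$, the set of all valuations on $R$ with values in $\Lambda_\infty$, is closed in $(\Lambda_\infty)^R$. If the statement is read with $\Lambda'$ a proper submonoid of $\Lambda_\infty$, I would add a subspace step, described next.

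Suppose $\Lambda'\subseteq\Lambda_\infty$ is a submonoid. Restrict the order topology of $\Lambda_\infty$ to $\Lambda'$ and check that \textbf{(P1)} and \textbf{(P2)} survive.
\begin{itemize}
\item For \textbf{(P2)}, take the separating open sets $U<U'$ in $\Lambda_\infty$ and intersect them with $\Lambda'$. The relation $U<U'$ is preserved.
\item For \textbf{(P1)}, addition on $\Lambda'\times\Lambda'$ is the restriction of a continuous map on $\Lambda_\infty\times\Lambda_\infty$. It lands in $\Lambda'$ because $\Lambda'$ is a submonoid, so it stays continuous for the subspace topologies.
\end{itemize}
Theorem \ref{criterio} then gives that $\mathcal{V}_R(\Lambda')$ is closed in $(\Lambda')^R$, where $(\Lambda')^R$ carries the product of the subspace topologies. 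That product coincides with the subspace topology $(\Lambda')^R$ inherits from $(\Lambda_\infty)^R$.

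To get closedness in $(\Lambda_\infty)^R$ itself, I would write
\[
\mathcal{V}_R(\Lambda')=\mathcal{V}_R(\Lambda_\infty)\cap(\Lambda')^R .
\]
So the result holds whenever $(\Lambda')^R$ is closed in $(\Lambda_\infty)^R$. This is true when $\Lambda'$ is closed in $\Lambda_\infty$, since $(\Lambda')^R=\bigcap_a\pi_a^{-1}(\Lambda')$.

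The main obstacle is the reading of the statement. It says ``closed in $(\Lambda_\infty)^R$'' and not ``in $(\Lambda')^R$'', which only works as stated for $\Lambda'=\Lambda_\infty$ or for $\Lambda'$ closed in $\Lambda_\infty$. I would adopt $\Lambda'=\Lambda_\infty$ as the primary case, where the result is an immediate combination of the two earlier results. For a general submonoid I would state the conclusion relative to $(\Lambda')^R$, or add the closedness hypothesis on $\Lambda'$.
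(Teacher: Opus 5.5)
Your proposal is correct and matches the paper's argument: the corollary is stated as an immediate consequence of Theorem~\ref{criterio} applied to the order topology, with hypotheses \textbf{(P1)} and \textbf{(P2)} supplied by Lemma~\ref{order}. Your caveat about closedness in $(\Lambda')^R$ versus $(\Lambda_\infty)^R$ is also well founded: for $\Lambda'=\mathbb{Q}\cup\{\infty\}\subset\mathbb{R}_\infty$, the valuations $q\cdot\operatorname{ord}_x$ on $k[x]$ with rational $q>0$ converge to $r\cdot\operatorname{ord}_x$ for irrational $r>0$, so the literal statement fails there. Your re-check of \textbf{(P1)} and \textbf{(P2)} on the subspace is unnecessary, though, because $\mathcal{V}_R(\Lambda')=\mathcal{V}_R(\Lambda_\infty)\cap(\Lambda')^R$ is automatically closed in $(\Lambda')^R$.
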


\subsection{The tree $\mathcal{T}$ is closed in $(\Lambda_\infty)^{K[x]}$}

Consider the tree $\mathcal{T}=\mathcal{T}(v, \Lambda')$, where $\Lambda'$ is a monoid. Then, $\mathcal{T}$ is a subset of $(\Lambda _{\infty})^{K[x]}$, where $\Lambda'\hookrightarrow \Lambda$ and $\Lambda$ is an ordered abelian group. 



In this context, we have the following result:

\begin{Prop}\label{Tfechado}
	Let $\Lambda'$ be any submonoid of $\Lambda_{\infty}$ and take a topology $\mathfrak{U}$ on $\Lambda'$ satisfying properties \textbf{(P1)} and \textbf{(P2)}. Then, the tree $\mathcal{T}$ is a closed set in $(\Lambda_{\infty})^{K[x]}$. 
\end{Prop}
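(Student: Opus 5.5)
We will write $\mathcal{T}$ as an intersection of two closed sets, reading everything inside $(\Lambda')^{K[x]}$ (the ambient product in which Theorem \ref{criterio} applies). The first set is $\mathcal{V}_{K[x]}(\Lambda')$, the set of all valuations on $K[x]$ with values in $\Lambda'$; it is closed by Theorem \ref{criterio}. The second set is
\[
E_v:=\{f\in(\Lambda')^{K[x]}\mid f(a)=v(a)\ \text{for every } a\in K\}=\bigcap_{a\in K}\pi_a^{-1}(\{v(a)\}).
\]
By definition $\mathcal{T}=\mathcal{V}_{K[x]}(\Lambda')\cap E_v$: an element of $\mathcal{T}$ is a valuation on $K[x]$ with values in $\Lambda'$ whose restriction to $K$ equals $v$. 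Conversely, any valuation in the intersection has restriction $v$ and value group inside the group generated by $\Lambda'$. If the paper's convention on $\Gamma_\mu\subseteq\Lambda$ differs slightly, this is just a matter of unwinding notation.

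It remains to show that $E_v$ is closed. As in the proof of Theorem \ref{criterio}, property \textbf{(P2)} makes $\mathfrak U$ Hausdorff: for $\gamma\neq\gamma'$, comparability gives separating open sets $U<U'$, which are disjoint. Hence singletons are closed in $\Lambda'$. Each projection $\pi_a$ is continuous, so each $\pi_a^{-1}(\{v(a)\})$ is closed, and an arbitrary intersection of closed sets is closed. This requires $v(a)\in\Lambda'$ for all $a\in K$, which is implicit since $\mathcal{T}\neq\emptyset$ presupposes that $\Lambda'$ contains $\Gamma_v\cup\{\infty\}$; otherwise $E_v=\emptyset$, which is trivially closed.

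Finally we pass to the ambient space $(\Lambda_\infty)^{K[x]}$, with the product topology coming from a topology on $\Lambda_\infty$ whose restriction to $\Lambda'$ is $\mathfrak U$. This is the main subtle point. The cleanest reading is that the statement means closed in $(\Lambda')^{K[x]}$, viewed as a subspace, exactly as in Theorem \ref{criterio}. I would state this explicitly. If closedness in the full product $(\Lambda_\infty)^{K[x]}$ is really intended, we additionally need $(\Lambda')^{K[x]}$ to be closed there, which holds when $\Lambda'$ is closed in $\Lambda_\infty$. This is the case, for example, for $\Lambda'=\Lambda_\infty$ with the order topology, which yields Corollary \ref{CorTreeisClosed}. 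In that case closedness is transitive, and the result follows.
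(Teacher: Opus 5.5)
Your argument is the paper's own: you write $\mathcal{T}=\mathcal{V}_{K[x]}(\Lambda')\cap E_v$ (the paper's $S$), get $E_v$ closed because \textbf{(P2)} forces $\mathfrak{U}$ to be Hausdorff, and get $\mathcal{V}_{K[x]}(\Lambda')$ closed from Theorem~\ref{criterio}. Your caveat about the ambient space is well founded, since the paper silently treats closedness in $(\Lambda')^{K[x]}$ as closedness in $(\Lambda_\infty)^{K[x]}$, and a hypothesis such as ``$\Lambda'$ closed in $\Lambda_\infty$'' is genuinely needed: take $v$ trivial, $\Lambda=\mathbb{R}$ and $\Lambda'=\mathbb{Q}_\infty$ with the order topology; then the valuations $r\,\mathrm{ord}_x\in\mathcal{T}$ with $r\in\mathbb{Q}$, $r\to\sqrt{2}$, converge in $(\mathbb{R}_\infty)^{K[x]}$ to $\sqrt{2}\,\mathrm{ord}_x\notin\mathcal{T}$.
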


\begin{proof}
	Consider the following subset of $(\Lambda_{\infty})^{K[x]}$
	\[
	S := \{ f \in (\Lambda_{\infty})^{K[x]} \mid f(a) = v(a) \text{ for all } a \in K \}.
	\]
	We will show that $S$ is closed in $(\Lambda_{\infty})^{K[x]}$ with respect to the topology $\mathfrak{U}$ by proving that its complement $S^c$ is open. Note that
	\[
	\begin{aligned}
		S^c &= \{ f \in (\Lambda_{\infty})^{K[x]} \mid f(a) \neq v(a) \text{ for some } a \in K \} \\
		&= \bigcup_{a \in K} \{ f \in (\Lambda_{\infty})^{K[x]} \mid f(a) \neq v(a) \}.
	\end{aligned}
	\]
	
	For each $a \in K$, we have
	\[
	\{ f \in (\Lambda_{\infty})^{K[x]} \mid f(a) \neq v(a) \} = \pi_a^{-1}(\Lambda_{\infty} \setminus \{v(a)\}),
	\]
	where $\pi_a : (\Lambda_{\infty})^{K[x]} \longrightarrow \Lambda_{\infty}$ is the projection onto the $a$-th coordinate. 
	
	By property \textbf{(P2)}, the topology $\mathfrak{U}$ is Hausdorff, so the singleton $\{v(a)\}$ is closed in $\Lambda_{\infty}$. Hence, $\Lambda_{\infty} \setminus \{v(a)\}$ is open. Since $\pi_a$ is continuous, the preimage $\pi_a^{-1}(\Lambda_{\infty} \setminus \{v(a)\})$ is open in $(\Lambda_{\infty})^{K[x]}$. Therefore, $S^c$ is a union of open sets and thus open, which implies that $S$ is closed.
	
	Finally, by Theorem \ref{criterio}, $\mathcal{V}_{R}(\Lambda')$ is closed in $(\Lambda_{\infty})^{K[x]}$. Therefore, 
	\[
	\mathcal{T} = \mathcal{V}_{R}(\Lambda') \cap S
	\]
	is closed in $(\Lambda_{\infty})^{K[x]}$ with respect to the topology $\mathfrak{U}$.
\end{proof}

\begin{Cor}\label{CorTreeisClosed}
	The tree $\mathcal{T}$ is a closed set in $(\Lambda_{\infty})^{K[x]}$, when considering the order topology on $\Lambda_{\infty}$.
\end{Cor}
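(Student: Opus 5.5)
The plan is to obtain the corollary directly from Proposition \ref{Tfechado}, with the monoid $\Lambda'=\Lambda_\infty$ (or whichever submonoid of $\Lambda_\infty$ we are using) and $\mathfrak{U}$ the order topology restricted to it. The only thing to check is that this topology satisfies \textbf{(P1)} and \textbf{(P2)}, and that is exactly Lemma \ref{order}.

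Once that is in place, Proposition \ref{Tfechado} writes $\mathcal{T}$ as the intersection
\[
\mathcal{T}=\mathcal{V}_{K[x]}(\Lambda')\cap S .
\]
Here $\mathcal{V}_{K[x]}(\Lambda')$ is closed by Corollary \ref{fechado}, which is Theorem \ref{criterio} applied with $R=K[x]$. The set $S$ consists of the maps agreeing with $v$ on $K$. It is closed because it is an intersection of preimages $\pi_a^{-1}(\{v(a)\})$, and each singleton is closed in the Hausdorff space $\Lambda_\infty$. An intersection of two closed sets is closed, which gives the statement.

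The main point needing care is the application of Lemma \ref{order}. Its proof treats $\gamma,\gamma'\in\Lambda$ and the point $\infty$ separately. For \textbf{(P2)} with $\gamma'=\infty$, we take $U=(-\infty,\alpha)$ and $U'=\{y>\alpha\}$ for any $\alpha>\gamma$, or $U=\{y\le\gamma\}$ if there is no gap between $\gamma$ and $\gamma'$, and then argue as in the finite case. We also check that when $\Lambda'$ is a proper submonoid, the subspace topology inherited from the order topology on $\Lambda_\infty$ still satisfies \textbf{(P1)} and \textbf{(P2)}, since both properties restrict to subspaces. With these checks the corollary follows immediately.
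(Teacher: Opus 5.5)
For $\mathcal{T}=\mathcal{T}(v,\Lambda)$, i.e.\ $\Lambda'=\Lambda_\infty$, your argument is exactly the paper's. Lemma \ref{order} supplies \textbf{(P1)} and \textbf{(P2)}, and Proposition \ref{Tfechado} writes $\mathcal{T}$ as the intersection of the closed sets $\mathcal{V}_{K[x]}(\Lambda_\infty)$ and $S$. Your treatment of the case $\gamma'=\infty$ in \textbf{(P2)} is correct and fills a case that the proof of Lemma \ref{order} passes over.

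The extension to a proper submonoid $\Lambda'\subsetneq\Lambda_\infty$ (your parenthetical ``or whichever submonoid'' and the last check) does not work. It is true that \textbf{(P1)} and \textbf{(P2)} pass to the subspace topology. But Theorem \ref{criterio} then only gives that $\mathcal{V}_{K[x]}(\Lambda')$ is closed in $(\Lambda')^{K[x]}$. To get closedness in $(\Lambda_\infty)^{K[x]}$ you would need $(\Lambda')^{K[x]}$ to be closed there, i.e.\ $\Lambda'$ closed in $\Lambda_\infty$.

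Without that, the conclusion is false. Take $v$ trivial on $K$, $\Lambda=\mathbb{R}$, $\Lambda'=\mathbb{Q}\cup\{\infty\}$, and for $q>0$ set $\nu_q(\sum_i a_ix^i)=\min\{iq\mid a_i\neq 0\}$. If $q_n\in\mathbb{Q}_{>0}$ with $q_n\to\sqrt{2}$, then each $\nu_{q_n}$ lies in $\mathcal{T}(v,\Lambda')$. The sequence converges coordinatewise, in the order topology of $\mathbb{R}_\infty$, to $\nu_{\sqrt{2}}$, which takes the value $\sqrt{2}$ at $x$ and so is not in $\mathcal{T}(v,\Lambda')$.

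The statements of Corollary \ref{fechado} and Proposition \ref{Tfechado} contain the same slip: for proper $\Lambda'$ they should read $(\Lambda')^{R}$ in place of $(\Lambda_\infty)^{R}$. So do not rely on them for the general case. Keep $\Lambda'=\Lambda_\infty$, which is the case the corollary is meant to cover.
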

\begin{proof}
It follows immediately from Lemma~\ref{order} and Proposition~\ref{Tfechado}.
\end{proof}

\end{document}